\newcommand{\poubelle}[1]{}
  \newcolumntype{x}[1]{>{\centering\hspace{0pt}}p{#1}}
\def\R{\mathbb{R}}
\def\{{\left\lbrace}
\def\}{\right\rbrace}
\newtheorem{theorem}{Theorem}[section]
\newtheorem{conjecture}{Conjecture}
\newtheorem{corollary}[theorem]{Corollary}
\newtheorem{define}[theorem]{Definition}
\newtheorem{lemma}[theorem]{Lemma}
\newtheorem{prop}[theorem]{Proposition}
\newtheorem{remark}[theorem]{Remark}
\newtheorem{convention}[theorem]{Convention}
\newenvironment{customthm}[1]
  {\innercustomthm}
  {\endinnercustomthm}
\newtheorem{iprop}{Proposition}
\newtheorem{icorollary}{Corollary}
\renewcommand{\v}{\mathrm{v}}
\newcommand{\w}{\mathrm{w}}
\newcommand{\aut}{\mathrm{Aut}}
\newcommand{\red}{\mathrm{red}}
\newcommand{\PL}{\textnormal{P,\textbf{L}}}
\title{A Yau-Tian-Donaldson correspondence on a class of toric fibrations}
\author{Simon Jubert}
\begin{document}

\maketitle

\begin{abstract}
We establish a Yau--Tian--Donaldson type correspondence, expressed in terms of a single Delzant polytope, concerning the existence of extremal K\"ahler metrics on a large class of toric fibrations,  introduced by Apostolov--Calderbank--Gauduchon--Tonnesen-Friedman  and called \textit{semi-simple
principal toric fibrations}. We use that an extremal metric on the total
space corresponds to a weighted constant scalar curvature K\"ahler metric (in the sense of Lahdili) on the
corresponding toric fiber in order to obtain an equivalence between the existence of extremal K\"ahler metrics on the total space and a suitable notion of weighted uniform K-stability of the corresponding Delzant polytope. As an application, we show that the projective plane bundle $\mathbb{P}(\mathcal{L}_0\oplus\mathcal{L}_1 \oplus \mathcal{L}_2)$, where $\mathcal{L}_i$ are holomorphic line bundles over an elliptic curve, admits an
extremal metric in every K\"ahler class.
\end{abstract}

\section{Introduction}

\subsection{Motivation}
A central problem in K\"ahler geometry,  proposed by Calabi \cite{EC2} in the 1980's, is to find a canonical K\"ahler metric in a given cohomology class of a compact K\"ahler manifold. Calabi suggested looking for \textit{extremal K\"ahler metrics}, characterized  by the property that the flow of the gradient of the scalar curvature  preserves the complex structure \cite{EC}. Constant scalar curvature K\"ahler (cscK for short) metrics and the much studied K\"ahler–Einstein metrics are particular examples of such metrics.

The existence of an extremal K\"ahler metric in a given K\"ahler class is conjecturally equivalent to a certain notion of \textit{stability} through an extension of the Yau-Tian-Donaldson's (YTD) conjecture, introduced \cite{GGS2, GGS} for the polarized case and \cite{RDJR} for a  general K\"ahler class. This conjecture, its ramification \cite{GGS2, GGS} and extension \cite{VA7, DR, EI, AL, GGS2, GGS,  ZSD} have generated tremendous efforts in K\"ahler geometry and have led to many interesting developments during the last decades.

The question has been settled in some special cases, especially on smooth toric varieties \cite{XC1, XC2, SKD, TH, CL, ZZ} where the relevant stability notion is expressed in terms of the convex affine geometry of the corresponding Delzant polytope,  and is referred to as \emph{uniform K-stability} of the polytope. Other special cases include Fano manifolds (see e.g. \cite{BBJ, CSS, CSS2, CSS3, LTW,  GT}), total spaces of projective line bundles over a cscK base \cite{VA8, VA3} and certain varieties with a large symmetry group \cite{TDD}. In general, though, the YTD conjecture is still open and it is expected that the relevant notion of stability would be the one of  \emph{relative uniform K-stability},  see e.g. \cite{BBJ, RD, GGS2, GGS}.

In  \cite{VA2}, the authors introduced a class of fiber bundles, called \textit{semi-simple rigid toric bundles}, which have toric K\"ahler fibers. They are obtained from \textit{the generalized Calabi construction} \cite{VA2, VA3} involving a product of polarized K\"ahler manifolds, a certain principal torus bundle and a given toric K\"ahler manifold. In this paper, we are interested in the special case of \textit{semi-simple principal toric fibration}. Namely, this is the case when the base is a global product of cscK Hodge manifolds, and there are no blow-downs, see \cite{VA3} or Remark \ref{remark-semi-simple} below. Examples include the total space of the projectivisation of a direct sum of holomorphic line bundles over a compact complex curve, as well as the $\mathbb{P}^1$-bundle constructions over the product of cscK Hodge  manifolds originally used by Calabi \cite{EC2}  and generalized in many subsequent works (see e.g.
 \cite{VA8, DG2, ADH, ADH2, KS, YS, CTF}). On any semi-simple rigid toric fibration, the authors of \cite{VA2} introduced a class of K\"ahler metrics, called \textit{compatible K\"ahler metrics}. A K\"ahler class containing a compatible K\"ahler metric is referred to as a \textit{compatible K\"ahler class}. For any compatible K\"ahler metric on $M$, the momentum map of the toric K\"ahler fiber $(V, \omega_V, J_V, \mathbb{T})$  can be identified with the momentum map of the induced $\mathbb{T}$-action on $(M, J, \omega_M)$, so that  the momentum image of $M$ is the Delzant polytope $P$ of $(V, \omega_V, J_V, \mathbb{T})$.  In \cite{VA3}, the authors made the following conjecture:

\begin{conjecture}[\cite{VA3}]{\label{Conjecture}}

Let $(M,J,\omega,\mathbb{T})$ a  semi-simple rigid toric fibration and $P$ its associated Delzant polytope. Suppose $[\omega]$ is a compatible K\"ahler class. Then the following statements are equivalent:

\begin{enumerate}
    \item $(M,J,[\omega])$ admits an extremal K\"ahler metric;
    \item $(M,J,[\omega])$ admits a compatible extremal K\"ahler metric;
    \item $P$ is weighted K-stable.
\end{enumerate}

\end{conjecture}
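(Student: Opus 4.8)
The plan is to prove the three-way equivalence by transferring the whole problem onto the toric fiber and its Delzant polytope $P$, and then invoking (and, where necessary, establishing) a weighted Yau--Tian--Donaldson correspondence for toric varieties. The implication $(2)\Rightarrow(1)$ is immediate, since a compatible extremal metric is in particular an extremal metric, so the real content is $(1)\Rightarrow(2)$ together with the equivalence with $(3)$. For $(1)\Rightarrow(2)$ I would argue by symmetrization: the fiber torus $\mathbb{T}$ and the isometry groups of the cscK base factors generate a compact group acting on $(M,J,[\omega])$, and by the uniqueness of extremal metrics modulo automorphisms (Chen--Tian, Berman--Berndtsson) one may choose an extremal metric in $[\omega]$ invariant under this group; one then checks that an invariant extremal metric lying in a compatible class is automatically compatible in the sense of the generalized Calabi construction.

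The core reduction, which feeds both remaining implications, is the correspondence already announced in the introduction: a compatible K\"ahler metric on $M$ is extremal if and only if the $\mathbb{T}$-invariant metric it induces on the toric fiber $V$ --- encoded by a symplectic potential $u$ on $P$ --- is a $(v,w)$-weighted cscK metric in Lahdili's sense, where the weight $v$ is the positive function on $P$ built from the normal-bundle/fibration data (the constants attached to each base factor) and $w$ encodes the extremal affine function together with the same data. This converts the existence question on $M$ into the solvability of a weighted Abreu-type fourth-order fully nonlinear equation for $u$ on $P$, with the appropriate Guillemin boundary behaviour on $\partial P$.

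With this dictionary in place, $(3)\Leftrightarrow(2)$ becomes a weighted toric YTD statement: the weighted Abreu equation admits a solution (equivalently, $M$ carries a compatible extremal metric) precisely when $P$ is weighted (uniformly) K-stable, meaning a weighted Donaldson--Futaki functional is coercive over the space of normalized convex functions on $P$. I would obtain the easy direction $(2)\Rightarrow(3)$ by integrating the weighted cscK equation against convex test functions and using the normalization and boundary conditions to show the weighted Donaldson--Futaki invariant is nonnegative and nondegenerate. The reverse direction $(3)\Rightarrow(2)$ is the analytic heart of the argument: one adapts the Chen--Cheng and Donaldson interior and boundary a priori estimates for the Abreu equation to the weighted setting, so that weighted uniform K-stability forces the requisite coercivity of the weighted Mabuchi energy and, via a continuity method or a direct variational argument, yields a smooth solution with the correct behaviour along $\partial P$.

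The step I expect to be the main obstacle is precisely this last one, namely controlling the weighted equation up to the boundary of $P$. Since the weights $v,w$ are smooth and strictly positive on $\overline{P}$, the estimates should morally parallel the unweighted toric case, but one must verify carefully that the weighted analogue of the Guillemin boundary conditions is preserved along the continuity path, that the weighted linear functional controlling $L^\infty$ and higher regularity of the symplectic potential remains uniformly bounded, and above all that the convex-geometric coercivity encoded by weighted uniform K-stability matches exactly the quantity that the PDE a priori estimates require. Reconciling these two notions of positivity --- the stability threshold on the polytope and the functional bound driving the estimates --- is where the genuine work lies.
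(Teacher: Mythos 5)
Your proposal contains two genuine gaps, both located exactly where the paper has to work hardest.

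First, the symmetrization argument for $(1)\Rightarrow(2)$ does not close. By Calabi's theorem you may indeed pick an extremal metric in $[\tilde{\omega}_0]$ invariant under a maximal torus $T\supset\mathbb{T}$, but a $T$-invariant metric in a compatible class is \emph{not} automatically compatible: compatibility means the metric comes from the generalized Calabi ansatz, equivalently that the $\mathbb{T}$-action is \emph{rigid} with respect to it (the pulled-back orbit metric $R_x^*\tilde{g}$ depends only on $m(x)$, see Remark \ref{remark-semi-simple}), and this is far stronger than invariance. Concretely, $T$-invariant potentials may depend on the base directions, while compatible potentials form the much smaller subspace $\mathcal{K}(V,\omega_0)^{\mathbb{T}}\subset\mathcal{K}(M,\tilde{\omega}_0)^{T}$ of functions pulled back from the toric fiber; adding $dd^c\varphi$ for a $T$-invariant $\varphi$ depending on the base destroys rigidity while preserving invariance. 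The paper can only assert that every extremal metric in a compatible class is of Calabi form \emph{after} Theorem \ref{theoremA} is proved: it first produces a compatible extremal metric — via the properness of the relative Mabuchi energy supplied by Theorem \ref{Chen--Cheng-existence} and a continuity path run inside $\mathcal{K}(V,\omega_0)^{\mathbb{T}}$, with openness controlled by the weighted Lichnerowicz and Hashimoto operators and closedness by $d_{1,T^{\mathbb{C}}}$-properness — and only then uses uniqueness modulo $T^{\mathbb{C}}$ to identify an arbitrary invariant extremal metric with the compatible one. Without a compatible extremal metric already in hand, uniqueness gives you nothing to compare against; your ``automatic compatibility'' check is precisely the content being proved, so the argument is circular.

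Second, the analytic heart of your $(3)\Rightarrow(2)$ — adapting Donaldson's interior and boundary a priori estimates for the (weighted) Abreu equation on $P$ — is not an available tool: that program was completed only for toric \emph{surfaces}, and even in the unweighted case the higher-dimensional toric YTD correspondence is not proved by boundary estimates on the polytope. The paper avoids the boundary-value problem on $P$ entirely. Weighted uniform K-stability is converted (Proposition \ref{stable-equivaut-energy-propr-v}, following Donaldson and Zhou--Zhu) into coercivity of the weighted Mabuchi energy with respect to the weighted distance of Lemma \ref{restriction-distance}; this coercivity is then transported, through the totally geodesic embedding $\mathcal{K}(V,\omega_0)^{\mathbb{T}}\hookrightarrow\mathcal{K}(M,\tilde{\omega}_0)^{T}$ of Corollary \ref{change-of-metric-cor} and the volume-form identity, into $d_{1,T^{\mathbb{C}}}$-properness of the relative Mabuchi energy on the \emph{closed} manifold $M$, where the Chen--Cheng/He theory applies; the continuity path restricted to compatible potentials then yields the $(\mathrm{v},\mathrm{w})$-cscK metric on the fiber. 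A further, smaller, inaccuracy: for the direction $(2)\Rightarrow(3)$, integrating the weighted equation against convex test functions only gives nonnegativity of $\mathcal{F}_{\mathrm{v},\mathrm{w}}$ with equality on affine functions; \emph{uniform} stability, i.e.\ the bound $\mathcal{F}_{\mathrm{v},\mathrm{w}}(f)\geq\lambda\|f^*\|_{1}$, requires the quantitative argument of Chen--Li--Sheng (cited in the paper as Proposition \ref{existence-implies-stable}), not just integration by parts.
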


\noindent In the third assertion, the notion of \textit{weighted stability} is a weighted version of the notion of K-stability introduced in \cite{SKD} (see \S \ref{subsection-K-stability-and-proper}) for appropriate values of the weight functions, and asks for the positivity of a linear functional defined on the space of convex piece-wise linear functions which are not affine-linear over $P$.

\subsection{Main results} Our purpose in this paper is to solve Conjecture \ref{Conjecture} for semi-simple principal toric fibrations.

\begin{customthm}{1}{\label{theorem1}}
For $M$ a semi-simple principal toric fibration, Conjecture $\ref{Conjecture}$ is true if we replace  condition $(3)$ with the  notion of weighted uniform K-stability, see Definition $\ref{uniform-K-stable}$.
\end{customthm}

\noindent In the above statement, in order to define \emph{uniform} (weighted) K-stability (see Definition \ref{uniform-K-stable}), we use normalized continuous convex functions which are smooth in the interior
of $P$ and the usual $L^1$-norm of $P$. By $C^0$ density and continuity, this is equivalent to the uniform (weighted) K-stability of $P$,  defined in terms of normalized convex piecewise linear functions and the $L^1$-norm.~\footnote{After the submission of the first version of our article on the arXiv, we have been contacted by Yasufumi Nitta who kindly shared with us his manuscript with Shunsuke Saito in which the authors establish, in the case of a polarized toric variety,  the equivalence between various notions of uniform K-stability of $P$. In particular, their result gives a strong evidence and establishes in a certain case the equivalence between the uniform weighted K-stability and and a suitable notion of weighted K-stability of $P$ in Conjecture \ref{Conjecture} $(3)$.}

 We split Theorem \ref{theorem1} in two statements: Theorem \ref{theoremAA} and Theorem \ref{theoremBB} below. Theorem \ref{theoremAA} corresponds to the statement "(1) $\Leftrightarrow$ (2)" in Conjecture \ref{Conjecture}.

\begin{customthm}{2}[Theorem \ref{theoremA}]{\label{theoremAA}}

Let $(M,J, \omega_M, \mathbb{T})$ be a semi-simple principal toric fibration with fiber $(V,J_V, \omega_V, \mathbb{T} )$. Then, the following statements are equivalent:

\begin{enumerate}
    \item there exists an extremal K\"ahler metric in $(M,J,[\omega_M],\mathbb{T})$;
      \item  there exists a compatible extremal K\"ahler metric in $(M,J,[\omega_M],\mathbb{T})$;
    \item  there exists a weighted cscK metric in $(V,J_V,[\omega_V],\mathbb{T})$ for the weights defined in $(\ref{weights})$ below.
\end{enumerate}

\end{customthm}

 In the third assertion, the notion of weighted cscK metric is in the sense of \cite{AL}, see \S\ref{section-scalv} for a precise definition. The equivalence $(2) \Leftrightarrow (3)$, established in \cite{VA2}, is recalled in \S \ref{section-rigidtoric}.  The main idea behind the proof of $(1) \Rightarrow (2)$ is to use that \cite{XC1, XC2, WH} the existence of an extremal K\"ahler metric implies a certain properness condition of the corresponding relative Mabuchi functional (see Theorem \ref{Chen--Cheng-existence} below for a precise statement) and then shows that the continuity path of \cite{XC5} can be made in the subspace of \emph{compatible} K\"ahler metrics in $[\omega_M]$. The deep  results \cite{XC1,XC2, WH} then yield the existence of an extremal K\"ahler metric in $[\omega_M]$ given  by the generalized Calabi construction of \cite{VA2}.

Recently, building on the proof of Theorem \ref{theoremAA}, a similar statement  was established in \cite{VA6} for a larger class of fibrations  associated to   a certain class of principal $\mathbb{T}$-bundles over products of cscK Hodge manifolds, whose fiber is an arbitrary compact K\"ahler manifold containing $\mathbb{T}$ in its reduced automorphism group.

Theorem \ref{theoremBB} below corresponds to the statement "(1) $\Leftrightarrow$ (3)" in Conjecture \ref{Conjecture} and provides a \emph{criterion} for verifying the equivalent conditions of Theorem \ref{theoremAA}, expressed in terms of the Delzant polytope of the fiber and data depending on the topology of $M$ and the compatible K\"ahler class.

\begin{customthm}{3}[Theorem \ref{theorem-B}]{\label{theoremBB}}
Let $(M,J, \omega_M, \mathbb{T})$ be a semi-simple principal toric fibration with fiber $(V,J_V, \omega_V, \mathbb{T})$ and  denote by $P$ its associated Delzant polytope. Then there exists a  weighted cscK metric in $[\omega_V]$ if and only if $P$ is weighted uniformly $K$-stable, for the weights defined in $(\ref{weights})$. In particular, the latter condition is necessary and sufficient for $[\omega_M]$ to admit an extremal K\"ahler metric.
\end{customthm}

\noindent The strategy of proof of the above result consists in considering the extremal K\"ahler metrics on the total space $(M,J,\mathbb{T})$ as weighted $(\v,\w)$-cscK metrics on the corresponding toric fiber $(V,J_V, \omega_V, \mathbb{T})$ via Theorem \ref{theoremAA}. 
We then use  the Abreu--Guillemin formalism and a weighted adaptation of the  results in  \cite{CLS, SKD, ZZ} to establish the equivalence on $(V, \omega_V,  J_V, \mathbb{T})$: in one direction, namely showing that the existence implies that polytope is weighted uniformly K-stable, the argument follows from a straightforward modification of the result in \cite{CLS},  which appears in \cite{LLS2}. To show the other direction, we build on \cite{SKD, ZZ} to obtain in Proposition \ref{stable-equivaut-energy-propr-v} that  the uniform weighted K-stability of the polytope implies a certain notion of coercivity of the weighted Mabuchi energy. We then show that the latter implies the properness  of the Mabuchi energy of $M$, and we finally conclude by invoking  again \cite{XC1,XC2, WH}.

Finally, we will be interested in a certain class of almost K\"ahler metrics on a toric manifold $(V,\omega,\mathbb{T})$. They are, by definition, almost K\"ahler metrics such that the orthogonal distribution to the $\mathbb{T}$-orbits
is involutive (see \cite{ML}) and we will refer to such metrics as \textit{involutive almost K\"ahler metrics}. The idea of  studying such metrics comes from \cite{SKD} (see \cite{VA2} for the weighted case), where it was conjectured that the existence of a weighted involutive csc almost K\"ahler metric is equivalent to the existence of a weighted cscK metric.

\begin{iprop}[Proposition \ref{equivalence-almostcsck}]{\label{equivalence-almostcsck2}}
Let $(V, \omega,\mathbb{T})$ be a toric manifold associated to a Delzant polytope $P$. Then, for the weights defined in $(\ref{weights})$, the following statements are equivalent:

\begin{enumerate}
    \item there exists a weighted cscK  metric on $(V,\omega,\mathbb{T})$;
    \item there exists an involutive weighted csc almost K\"ahler metric on $(V,\omega,\mathbb{T})$;
    \item $P$ is weighted uniformly K-stable.
\end{enumerate}

\end{iprop}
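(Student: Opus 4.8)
The plan is to close the cycle $(1) \Rightarrow (2) \Rightarrow (3) \Rightarrow (1)$, taking advantage of the fact that the equivalence $(1) \Leftrightarrow (3)$ is already the content of Theorem \ref{theoremBB}; the only genuinely new point is to insert assertion $(2)$ into the loop. The implication $(1) \Rightarrow (2)$ should be essentially immediate: a (\,$\mathbb{T}$-invariant\,) weighted cscK metric is by definition a genuine K\"ahler metric whose weighted scalar curvature is the prescribed affine function of the moment map, hence an almost K\"ahler metric with the required weighted csc property; and on a toric manifold the orthogonal distribution to the $\mathbb{T}$-orbits of any compatible $\mathbb{T}$-invariant K\"ahler metric is spanned, in action--angle coordinates, by the moment-coordinate directions, and is therefore involutive. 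Thus a weighted cscK metric is in particular an involutive weighted csc almost K\"ahler metric.

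For $(2) \Rightarrow (3)$ I would pass to the Abreu--Guillemin description on the polytope. The structural fact I rely on, going back to \cite{SKD} and in the weighted setting to \cite{VA2, AL}, is that the involutive $\mathbb{T}$-invariant $\omega$-compatible almost K\"ahler metrics on $(V,\omega,\mathbb{T})$ are parametrized by symmetric positive-definite matrix fields $H=(H_{ij})$ on the interior of $P$ satisfying Guillemin-type boundary conditions, and that the weighted scalar curvature of such a metric is given by the weighted Abreu formula, a second-order linear expression in $H$ depending only on $H$ and the weights of $(\ref{weights})$. The key point is that this formula holds whether or not $H^{-1}$ is the Hessian of a symplectic potential, i.e. whether or not the underlying almost complex structure is integrable; the honest K\"ahler case is precisely the sub-case where $H^{-1}$ is a Hessian. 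Consequently, the weighted csc almost K\"ahler equation for an involutive metric is exactly the weighted Abreu equation on $P$, now solved by an \emph{a priori} general positive-definite $H$. Feeding such a solution into the integration-by-parts argument of \cite{CLS}, in the form adapted in \cite{LLS2}, then yields the positivity of the relevant linear functional on normalized convex functions, that is, the weighted uniform K-stability of $P$.

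The remaining implication $(3) \Rightarrow (1)$ is exactly the hard direction of Theorem \ref{theoremBB}: weighted uniform K-stability of $P$ implies, through Proposition \ref{stable-equivaut-energy-propr-v}, the coercivity of the weighted Mabuchi energy, and hence, via the weighted adaptation of \cite{XC1, XC2, WH}, the existence of a genuine weighted cscK metric on $(V,\omega,\mathbb{T})$. Invoking this closes the cycle and establishes the equivalence of the three assertions.

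I expect the main obstacle to lie in the careful justification of $(2) \Rightarrow (3)$: one must verify that both the weighted Abreu formula for the scalar curvature and the integration-by-parts identity producing the linear functional remain valid for involutive almost K\"ahler metrics whose matrix field $H$ is \emph{not} constrained to be the inverse of a Hessian. Concretely, this amounts to checking that neither the derivation of the weighted Abreu formula nor the boundary analysis in the \cite{CLS}/\cite{LLS2} computation ever used the integrability of $J$, but only the symmetry, positivity, and prescribed boundary behaviour of $H$, so that the stability estimate survives verbatim in the non-integrable setting.
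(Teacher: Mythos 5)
Your proposal is correct and follows essentially the same route as the paper: the trivial implication $(1)\Rightarrow(2)$, the implication $(2)\Rightarrow(3)$ obtained by running the integration-by-parts argument of \cite{CLS}/\cite{LLS2} (Proposition \ref{existence-implies-stable}) on a general positive-definite matrix field $\textbf{H}$ satisfying the boundary conditions of Proposition \ref{boudary} and the weighted Abreu equation $(\ref{acscK-equation})$, and finally $(3)\Rightarrow(1)$ by the hard direction of Theorem \ref{theorem-B}. The point you flag as the main obstacle is exactly the paper's ``additional observation,'' and it is already built into the paper's framework, since the definition of an involutive $(\v,\w)$-csc almost K\"ahler metric is via $(\ref{acscK-equation})$ (shown in \cite{VA3} to compute the hermitian scalar curvature) and Lemma \ref{IPP-lemma} is stated for matrix fields that need not be inverse Hessians.
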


As an application of the above result, we study the existence of extremal K\"ahler metrics on the projectivisation $\mathbb{P}(\mathcal{L}_0 \oplus \mathcal{L}_1 \oplus \mathcal{L}_2)$ of a direct sum of line bundles $\mathcal{L}_i$ over a compact complex curve $S_{\textnormal{\textbf{g}}}$ of genus $\textnormal{\textbf{g}}$. In \cite[Proposition 4]{VA3}, the authors established the existence of  involutive weighted csc almost K\"ahler metrics on $\mathbb{P}(\mathcal{L}_0 \oplus \mathcal{L}_1 \oplus \mathcal{L}_2)$, depending on the degrees of the line bundles, the genus of the basis and the K\"ahler class. Combining with Proposition \ref{equivalence-almostcsck2}, we deduce the following:

\begin{icorollary}{\label{prop-ex}}
Let $M=\mathbb{P}( \mathcal{L}_0 \oplus \mathcal{L}_1 \oplus \mathcal{L}_2) \longrightarrow S_{\textnormal{\textbf{g}}}$ be a projective $\mathbb{P}^2$-bundle over a complex curve  $S_{\textnormal{\textbf{g}}}$ of genus $\textnormal{\textbf{g}}$. If $\textnormal{\textbf{g}}=0,1$, then $M$ is a Calabi dream manifold, i.e. $M$ admits an extremal K\"ahler metric in each K\"ahler class. Furthermore, the extremal K\"ahler metrics are given by the generalized Calabi ansatz of \cite{VA2}.
\end{icorollary}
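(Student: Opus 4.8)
The plan is to produce the extremal metrics on $M$ by transferring the \emph{almost} K\"ahler solutions of \cite{VA3} to genuine weighted cscK metrics on the toric fiber and then lifting these back to $M$ through Theorem \ref{theoremAA}. First I would record that, for $\textnormal{\textbf{g}}=0,1$, the base carries a cscK metric of Hodge type --- the Fubini--Study metric on $\mathbb{P}^1$ when $\textnormal{\textbf{g}}=0$, and a flat metric on the elliptic curve when $\textnormal{\textbf{g}}=1$ --- so that $M=\mathbb{P}(\mathcal{L}_0\oplus\mathcal{L}_1\oplus\mathcal{L}_2)$, whose fiber is the toric variety $V=\mathbb{P}^2$ with torus $\mathbb{T}=(S^1)^2$ and Delzant polytope the standard $2$-simplex $P$, is a semi-simple principal toric fibration in the sense used throughout the paper. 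I would then check that every K\"ahler class on $M$ is compatible: since $M$ is a $\mathbb{P}^2$-bundle over a curve one has $b_2(M)=2$, and the two-parameter family of compatible classes arising from the generalized Calabi construction of \cite{VA2} (the scale of the base metric together with the toric class on the fiber) already exhausts the K\"ahler cone, so restricting to compatible classes loses nothing.

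Next I would invoke \cite[Proposition 4]{VA3}, which produces, for each compatible K\"ahler class $[\omega_M]$ and for the weights $(\v,\w)$ read off from the degrees of the $\mathcal{L}_i$, the genus $\textnormal{\textbf{g}}$, and the class, an involutive weighted csc almost K\"ahler metric on $M$, equivalently on the toric fiber $(V,\omega_V,\mathbb{T})$. The point to verify is that this existence holds in \emph{every} K\"ahler class precisely when $\textnormal{\textbf{g}}=0,1$: the criterion of \cite{VA3} reduces to an explicit positivity condition depending on $\textnormal{\textbf{g}}$ and the $\deg\mathcal{L}_i$, which is unconditional for $\textnormal{\textbf{g}}\le 1$ but can degenerate as the genus grows. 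Granting this, condition $(2)$ of Proposition \ref{equivalence-almostcsck2} is satisfied on $V$ for every K\"ahler class, with the weights of $(\ref{weights})$.

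With the almost K\"ahler solution in hand I would apply Proposition \ref{equivalence-almostcsck2}: the implication $(2)\Rightarrow(1)$ upgrades the involutive weighted csc almost K\"ahler metric on $V=\mathbb{P}^2$ to a genuine weighted cscK metric in $[\omega_V]$, equivalently $(2)\Rightarrow(3)$ shows that the triangle $P$ is weighted uniformly K-stable. Finally, Theorem \ref{theoremAA} (the implication $(3)\Rightarrow(2)$), or equivalently Theorem \ref{theoremBB}, converts this into a \emph{compatible} extremal K\"ahler metric on $M$ in $[\omega_M]$, which is by construction of the form produced by the generalized Calabi ansatz of \cite{VA2}. Since $[\omega_M]$ was arbitrary, $M$ is a Calabi dream manifold and the extremal representatives are exactly the compatible ones, which yields the ``Furthermore'' clause.

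I expect the genuine content to sit in the second paragraph: matching the weights $(\v,\w)$ and the involutivity condition on $M$ with the fiberwise data entering Proposition \ref{equivalence-almostcsck2}, and confirming that the explicit existence criterion of \cite[Proposition 4]{VA3} is met throughout the whole K\"ahler cone exactly for $\textnormal{\textbf{g}}=0,1$. Once these compatibility and positivity checks are granted, the passage almost K\"ahler $\Rightarrow$ weighted cscK $\Rightarrow$ extremal is a direct application of the equivalences already established, and the remaining steps are routine bookkeeping.
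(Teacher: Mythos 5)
Your core chain of implications --- involutive weighted csc almost K\"ahler metrics from \cite[Proposition 4]{VA3}, upgraded to a weighted cscK metric on the fiber $V=\mathbb{P}^2$ by Proposition \ref{equivalence-almostcsck2}, then lifted to a compatible extremal metric on $M$ by Theorem \ref{theoremAA} (equivalently Theorem \ref{theoremBB}), with all K\"ahler classes being compatible up to scale because $H^2(M,\R)$ is $2$-dimensional by Leray--Hirsch --- is exactly the paper's argument for its main case, in particular for $\textnormal{\textbf{g}}=1$. (For $\textnormal{\textbf{g}}=0$ the paper instead quotes \cite{EL} for existence, using that $M$ is then a toric $3$-fold, and only adds the Calabi-ansatz statement via Proposition \ref{equivalence-almostcsck} and Theorem \ref{theoremA}; your uniform fibration treatment of $\textnormal{\textbf{g}}=0$ is a reasonable variant of this.)

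The genuine gap is at the very start: you apply \cite[Proposition 4]{VA3} to an \emph{arbitrary} configuration of line bundles, but that existence result for involutive extremal almost K\"ahler metrics is invoked in the paper only after a reduction to the non-degenerate case. Concretely, the paper first normalizes $\mathcal{L}_0=\mathcal{O}$ with $\textit{\textbf{p}}_i=\deg(\mathcal{L}_i)\geq 0$, and then disposes of the degenerate configurations separately: if $\textit{\textbf{p}}_1=\textit{\textbf{p}}_2=0$ the bundle is projectively flat and Fujiki's theorem \cite{AF} already gives an extremal metric in every class; if $\textit{\textbf{p}}_2=\textit{\textbf{p}}_1>0$ or $\textit{\textbf{p}}_2>\textit{\textbf{p}}_1=0$ the existence in every class for $\textnormal{\textbf{g}}=0,1$ comes from \cite[Theorem 6.2]{VA8}. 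Only in the remaining case $\textit{\textbf{p}}_2>\textit{\textbf{p}}_1>0$, where the $2$-torus fibration picture is genuinely ``semi-simple rigid'' in the generic sense and the explicit construction of \cite{VA3} is formulated, does the paper run the almost-K\"ahler-to-cscK machinery. Your proposal's ``explicit positivity condition, unconditional for $\textnormal{\textbf{g}}\leq 1$'' is not available from \cite{VA3} when the degrees coincide or vanish (those cases carry a different, circle-action Calabi-type ansatz rather than the $2$-torus one), so as written your argument does not cover, e.g., $\mathbb{P}(\mathcal{O}\oplus\mathcal{L}\oplus\mathcal{L})$ or $\mathbb{P}(\mathcal{O}\oplus\mathcal{O}\oplus\mathcal{L})$. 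Once this case analysis is inserted, the rest of your proof coincides with the paper's.
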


\noindent  

When $\textnormal{\textbf{g}}=0$, the existence part of Corollary $\ref{prop-ex}$ was already obtained in \cite{EL}. We prove in addition that these extremal metrics are given by the Calabi ansatz of \cite{VA2}.

\subsection{Outline of the paper} Section \ref{section-scalv} is a brief summary of the notion of weighted $(\v,\w)$-scalar curvature introduced by Lahdili \cite{AL}. In Section 3, we recall the construction and key results of semi-simple principal toric fibration established in \cite{VA2, VA3}. In Section \ref{section-distance}, we  introduce weighted distances, weighted functionals and  weighted differential operators. Section \ref{section-chencheng} gives a brief exposition of the existence result of  \cite{XC1, XC2, WH}. We  explain why their argument works equally when the properness is relative to a maximal torus of the reduced group of automorphism (and not only a connected maximal compact subgroup). In Section \ref{section-theoremA}, our main result, Theorem \ref{theoremAA}, is stated and proved. In Section 7 we review the basic facts of toric K\"ahler geometry and give the proof of Theorem  \ref{theoremBB}. In Section \ref{section-application}, we show Corollary \ref{prop-ex}.

\section*{Acknowledgement}
This paper is part of my PhD thesis. I am very grateful to my advisors Vestislav Apostolov and Eveline Legendre for their immeasurable help and invaluable advices. I would also like to thank Abdellah Lahdili for his careful reading and constructive criticism in the earlier versions, Yasufumi Nitta for his interest and to shared with me his manuscript with Shunsuke Saito, and the referee for his/her careful reading of our work and thoughtful suggestions that greatly improved the text. I am grateful to  l'Université du Québec à Montréal and l'Université Toulouse III Paul Sabatier for their financial support.

\section{The $\mathrm{v}$-scalar curvature}{\label{section-scalv}}

In this section, we review briefly the notion of \textit{weighted $\mathrm{v}$-scalar curvature} introduced by Lahdili in \cite{AL}. Consider a smooth compact K\"ahler manifold $(M,J,\omega)$. We denote by $\mathrm{Aut}_{\mathrm{red}}(M)$ the reduced group of automorphisms whose Lie algebra $\mathfrak{h}_{\red}$ is given by the ideal of real holomorphic vector fields with zeros, see \cite{PG}. Let $\mathbb{T}$ be an $\ell$-dimensional real torus in $\mathrm{Aut}_{\mathrm{red}}(M)$ with Lie algebra $\mathfrak{t}$. Suppose $\omega_0$ is a $\mathbb{T}$-invariant K\"ahler form and consider the set of smooth $\mathbb{T}$-invariant K\"ahler potentials $\mathcal{K}(M,\omega_0)^{\mathbb{T}}$ relative to $\omega_0$. For $\varphi \in \mathcal{K}(M,\omega_0)^{\mathbb{T}}$ we denote by $\omega_{\varphi}=\omega_0+dd^c\varphi$ the corresponding K\"ahler metric. It is well known that the $\mathbb{T}$-action on $M$ is $\omega_{\varphi}$-Hamiltonian (see \cite{PG}) and we let  $m_{\varphi} : M \longrightarrow \mathfrak{t}^{*}$ denote a $\omega_{\varphi}$-momentum map of $\mathbb{T}$. It is also known \cite{MA, VGSS, AL} that $P_{\varphi}:= m_{\varphi}(M)$ is a convex polytope in $\mathfrak{t}^{*}$ and we can normalize $m_{\varphi}$ by

\begin{equation}{\label{normalizing-moment-map}}
m_{\varphi}=m_{0} + d^c\varphi,
\end{equation}

\noindent in such a way that $P=P_{\varphi}$ is $\varphi$-independent, see \cite[Lemma 1]{AL}.

\begin{define}{\label{weighted-scalar}}
For $\mathrm{v}\in \mathcal{C}^{\infty}(P,\R_{>0})$ we define the (weighted) $\mathrm{v}$-scalar curvature of the K\"ahler metric $\omega_{\varphi}$, $\varphi \in \mathcal{K}(M,\omega_0)^{\mathbb{T}}$, to be

\begin{equation*}
    Scal_{\mathrm{v}}(\omega_{\varphi}):=\mathrm{v}(m_{\varphi})Scal(\omega_{\varphi})+ 2 \Delta_{\omega_{\varphi}}\big(\mathrm{v}(m_{\varphi})\big) + \textnormal{Tr}\big(G_{\varphi} \circ (\textnormal{Hess}(\mathrm{v}) \circ m_{\varphi} )\big),
\end{equation*}

\noindent where $\Delta_{\omega_{\varphi}}$ is the Riemannian Laplacian associated to $g_{\varphi}:=\omega_{\varphi}(\cdot,J\cdot)$, $\textnormal{Hess}(\mathrm{v})$ is the Hessian of $\v$ viewed as bilinear form on $\mathfrak{t}^*$  whereas  $G_{\varphi}$ is the bilinear form with smooth coefficients on $\mathfrak{t}$, given by the restriction of the Riemannian metric $g_{\varphi}$ on fundamental vector fields and $Scal(\omega_{\varphi})$ is the scalar curvature of $(M,J,\omega_{\varphi})$.
\end{define}

In a basis $\boldsymbol{\xi}=(\xi_i)_{i=1 \cdots \ell}$ of $\mathfrak{t}$ we have

\begin{equation*}
\text{Tr}\big(G_{\varphi} \circ (\text{Hess}(\v) \circ m_{\varphi} )\big) = \sum_{1\leq i,j\leq \ell}\mathrm{v}_{,ij}(m_{\varphi})g_{\varphi}(\xi_i,\xi_j)
\end{equation*}

\noindent where $\mathrm{v}_{,ij}$ stands for the partial derivatives of $\v$ in the dual basis of $\boldsymbol{\xi}$. 

\begin{define}{\label{define-scalv}}
Let $(M,J,\omega_0)$ be a compact K\"ahler manifold, $\mathbb{T}\subset \mathrm{Aut}_{\mathrm{red}}(M)$ a real torus
with normalized momentum image $P \subset \mathfrak{t}^*$ associated to $[\omega_0]$, and $\mathrm{v}\in \mathcal{C}^{\infty}(P, \R_{>0})$,
$\mathrm{w} \in \mathcal{C}^{\infty}(P, \R)$. A $(\mathrm{v}, \w)$-cscK metric is a $\mathbb{T}$-invariant K\"ahler metric satisfying

\begin{equation}{\label{weighted-cscK-metric}}
    Scal_\v(\omega_{\varphi})=\w(m_{\varphi}).
\end{equation}

\end{define}

The motivation for studying (\ref{weighted-cscK-metric}) is that many natural geometric  problems in K\"ahler geometry correspond to (\ref{weighted-cscK-metric}) for suitable choices of $\mathrm{v}$ and $\mathrm{w}$. For example, for $\mathbb{T}$ a maximal torus in $\mathrm{Aut}_{\mathrm{red}}(M)$, $\mathrm{v}\equiv1$ and $\mathrm{w}_{\textnormal{ext}}$ a certain affine-linear function on $\mathfrak{t}^*$, the $(1,\mathrm{w}_{\textnormal{ext}})$-cscK metrics are the extremal metrics in the sense of Calabi. Another example, which will be the one of the main interest of this paper, is the existence theory of extremal K\"ahler metrics on a class of toric fibrations, which can be reduced to the study of $(\mathrm{v},\mathrm{w})$-cscK on the toric fiber for suitable choices of $\mathrm{v}$ and $\mathrm{w}$. Weighted K\"ahler metrics have been extensively studied and related to a notion
of $(\v, \w)$-weighted K-stability, see for example \cite{VA7, VA6, VA5, EI, AL}.

\section{A class of toric fibrations}{\label{section-rigidtoric}}

\subsection{Semi-simple principal toric fibrations}{\label{subsection-torique-rigide}}

Let $\mathbb{T}$ be an $\ell$-dimensional torus. We denote by $\mathfrak{t}$ is Lie algebra and by $\Lambda \subset \mathfrak{t}$ the lattice of the generators of circle subgroups, so that $\mathbb{T} = \mathfrak{t}/2\pi \Lambda$. Consider $ \pi_S : Q \longrightarrow  (S,J_S)$ a principal $\mathbb{T}$-bundle over a $2d$-dimensional product of cscK Hodge manifold $(S,J_S,\omega_S)=\prod_{a=1}^k (S_a,J_a,\omega_a)$. 
 Let $\theta \in \Omega^1(Q)\otimes \mathfrak{t}$ be a connection $1$-form with curvature

\begin{equation}{\label{first-chern}}
   d \theta = \sum_{a=1}^k \pi_S^*\omega_a \otimes p_a \text{ } \text{ } \text{ } p_a \in \Lambda \subset \mathfrak{t}.
\end{equation}

\noindent The connection $1$-form $\theta$ gives rise to a \textit{horizontal} distribution $\mathcal{H}:=ann(\theta)$ and the tangent space splits as

\begin{equation*}
    TQ= \mathcal{H} \oplus \mathfrak{t},
\end{equation*}

\noindent where, by definition, $\mathcal{H}_s \overset{d_s\pi_S}{\cong} T_sS$ for all $s\in S$. The complex structure $J_S$ acts on vector fields in $\mathcal{H}$ via the unique horizontal lift from $TS$ defined via $\theta$.

Now consider a $2\ell$-dimensional compact toric K\"ahler manifold $(V,J_V,\omega,\mathbb{T})$ with associated compact Delzant polytope $P$ \cite{TD}. We will consider various actions of $\mathbb{T}$ in this paper. In order to avoid confusion, we specify on which $\mathbb{T}$ acts as a subscript, e.g. $\mathbb{T}_Q$ acts on $Q$. The interior $P^0$ is the set of regular value of the moment $m_{\omega} : V \longrightarrow P \subset \mathfrak{t}^*$ of $(V,\omega,\mathbb{T})$  and  $V^0:=m_{\omega} ^{-1}(P^0)$  is the open dense subset of points with regular $\mathbb{T}_V$-orbits. Introducing angular coordinates $t: V^0 \to \mathfrak{t} /2\pi \Lambda$  with respect to the K\"ahler structure $(J_V, \omega)$  (see e.g. \cite{MA3}), we identify 

\begin{equation}{\label{identification-moment-angle}}
V^0 \cong \mathbb{T} \times P^0 \text{  } \text{ and } \text{ } T_xV^0\cong \mathfrak{t} \oplus \mathfrak{t}^*.
\end{equation}

\noindent  for all $x \in V^0$. Notice that the first diffemorphism is $\mathbb{T}$-equivariant.

We consider the $2n=2(\ell+d)$ dimensional smooth manifold

\begin{equation*}
    M^0:=Q \times_{\mathbb{T}} V^0,
\end{equation*}

\noindent  where the $\mathbb{T}_{Q\times V^0}$-action is given by $ \gamma \cdot  (q, x) = (\gamma \cdot q, \gamma^{-1} \cdot x)$, $q \in Q$, $x \in V^0$,  and $\gamma \in  \mathbb{T}$.  Using  (\ref{identification-moment-angle}) we identify

\begin{equation}{\label{identification-M0}}
    M^0 \cong Q \times P^0.
\end{equation}

\noindent We will still denote by $\pi_S : M^0 \longrightarrow S$ the projection. At the level of the tangent space we get

\begin{equation}{\label{splitting}}
    TM^0= \mathcal{H} \oplus \mathcal{V},
\end{equation}

\noindent where, for all $s\in S$, $\mathcal{V}_s:= \textnormal{ker} d_s\pi_S \cong \mathfrak{t} \oplus \mathfrak{t}^* $ is  referred to as the \textit{vertical space}. Since $V^0$ compactifies to $V$, the smooth manifold $M^0$ compactifies to a fiber bundle with fiber $V$

\begin{equation*}
    M := \overline{M^0}= Q \times_{\mathbb{T}} V.
\end{equation*}

\noindent By construction, $M^0$ is an open dense subset of $M$ consisting of points with regular $\mathbb{T}_M$-orbits.

One can show that the almost complex structure $J_M:=J_S \oplus J_V$ on $M^0$ is integrable and extends on $M$ as $J_V$ extends to $V$.  In other words, $M$ is a compactification of a principal $(\mathbb{C}^*)^{\ell}$-bundle $\pi_S : (M^0,J) \longrightarrow (S,J_S)$.

\subsection{Compatible K\"ahler metrics}

Following \cite{VA3}, we introduce a family of K\"ahler metrics \textit{compatible with the bundle structure}.  
In momentum-angular coordinates $(m_{\omega},t)$, the K\"ahler form $\omega$ of $(V,J_V,\mathbb{T})$ is writen on $V^0$ 

\begin{equation}{\label{toric_symplectic}}
    \omega=\langle dm_{\omega} \wedge dt \rangle,
\end{equation}

\noindent where the angle bracket denotes the contraction of $\mathfrak{t}^*$ and $\mathfrak{t}$. By (\ref{identification-M0}), we can equivalently see $\theta$ on $M^0= Q \times_{\mathbb{T}} V^0$ which  satisfies $\theta(\xi^M)=\xi$ and $\theta(J\xi^M)=0$, where $\xi^M$ is the fundamental vector field defined by $\xi \in \mathfrak{t}$. Then, $\langle dm_{\omega} \wedge \theta \rangle$ is well defined on $M^0$ and restricts to $\langle dm_{\omega} \wedge dt \rangle$ on the fibers. Thus, we define more generally

\begin{equation*}
   \omega:=\langle dm_{\omega} \wedge \theta \rangle.
\end{equation*}

\noindent  We choose the real constants $c_a$, $1\leq a \leq k$, such that the affine-linear functions $\langle p_a,m_{\omega} \rangle +c_a$ are positive on $P$, where, we recall the elements $p_a \in \Lambda$ are defined by $(\ref{first-chern})$. We then define the 2-form on $M^0$

\begin{equation}{\label{metriccalabidata}}
    \tilde{\omega}=\sum_{a=1}^k\left(\langle p_a,m_{\omega}\rangle +c_a\right)\omega_a + \langle dm_{\omega} \wedge \theta \rangle,
\end{equation}

\noindent which extends to a smooth K\"ahler form on $(M,J)$ since $\omega$ does on $(V,J_V)$. In the sequel, we fix the metrics $\omega_a$, the $1$-form $\theta$ and the constants $c_a$, noting that $p_a \in \mathfrak{t}$ are topological constants of the bundle construction. The K\"ahler manifold $(M,J,\tilde{\omega},\mathbb{T})$ is then a fiber bundle over $S$, with fiber the K\"ahler toric manifold $(V, J_V, \omega, \mathbb{T})$, obtained from the principal $\mathbb{T}$-bundle $Q$. Following \cite{VA3}, we define:

\begin{define}
 The K\"ahler manifold $(M,J,\tilde{\omega},\mathbb{T})$  constructed above is referred to as \textit{a semi-simple principal toric fibration} and the K\"ahler metric given explicitly on $M^0$ by $(\ref{metriccalabidata})$, is referred to as a \textit{compatible K\"ahler metric}.  The corresponding K\"ahler classes on $(M,J)$ are called \textit{compatible K\"ahler classes} and, in the above set up, are parametrized by the real constant $c_a$.
\end{define}

\begin{remark}{\label{remark-semi-simple}}Let $(M,J, \tilde{g}, \tilde{\omega})$ be a compact K\"ahler $2n$-manifold endowed with an effective isometric hamiltonian action of an $\ell$-torus $\mathbb{T} \subset \mathrm{Aut}_{\mathrm{red}}(M)$ and momentum map $m : M \rightarrow \mathfrak{t}^{*}$. Following \cite{VA3}, we say the action is rigid if for all $x$ in $M$ $R^{*}_x\tilde{g}$ depends only on $m(x)$, where $R_x : \mathbb{T} \rightarrow \mathbb{T} \cdot x$ is the orbit map. This action is said  semi-simple rigid if moreover, for any regular value $x_0$ of the momentum map, the derivative with respect to $x$ of the family $\omega_S(x)$ of K\"ahler forms on the complex quotient $(S,J_{S})$ of $(M, J)$ is parallel and diagonalizable with respect to $\omega_{S}(x_0)$. 

By the result of \cite{VA4, VA2, VA3}, semi-simple principal toric fibrations $(M, J, \tilde{\omega}, \mathbb{T})$ correspond  to K\"ahler manifolds with a semi-simple rigid torus action, such that the K\"ahler quotient $S$ is a global product of cscK manifolds, and there are no blow-downs.
\end{remark}

\noindent The volume form of a compatible K\"ahler metric (\ref{metriccalabidata}) satisfies

\begin{equation}{\label{volume-form}}
\tilde{\omega}^{[n]}=\omega_S^{[d]}\wedge \mathrm{v}(m_{\omega}) \omega^{[\ell]}=\bigwedge_{a=1}^k \omega_a^{[d_a]}\wedge \mathrm{v}(m_{\omega}) \omega^{[\ell]}
\end{equation}

\noindent where $\mathrm{v}(m_{\omega}):=\prod_{a=1}^k\big(\langle p_a,m_{\omega} \rangle +c_a\big)^{d_a}$, $d_a$ is the complex dimension of $S_a$ and $\omega^{[i]}:=\frac{\omega^i}{i!}$ for $ 1 \leq i \leq n$. It follows from \cite{VA2} and \cite[Sect. 6]{AL}  that the scalar curvature of a compatible metric is given by

\begin{equation}{\label{weighted-scalarcurv}}
    Scal(\tilde{\omega})=\sum_{a =1}^k \frac{Scal_a}{\langle p_a,m_{\omega} \rangle +c_a} + \frac{1}{\mathrm{v}(m_{\omega})} Scal_{\v}(\omega),
\end{equation}

\noindent where $Scal_a$ is the constant scalar curvature of $(S_a,J_a,\omega_a)$ and $Scal_{\v}(\omega)$ is the $\v$-weighted scalar curvature of $(V,J_V,\omega, \mathbb{T})$, see Definition \ref{weighted-scalar}. 

\subsection{The extremal vector field}{\label{section-extremal-vector-fields}} We now recall the definition of the extremal vector field on a general compact K\"ahler manifold $M$. To this end, we fix a maximal torus $T \subset \aut_{\red}(M)$ and a K\"ahler class $[\tilde{\omega}_0]$. Given any $T$-invariant K\"ahler metric $\tilde{\omega} \in [\tilde{\omega}_0]$, we consider the $L^2_{\tilde{\omega}}$-orthogonal projection 

\begin{equation*}
   \Pi_{\tilde{\omega}} : L^2_{\tilde{\omega}} \longrightarrow P^T_{\tilde{\omega}}
\end{equation*}

\noindent where  $P^T_{\tilde{\omega}}$ is the space of $\tilde{\omega}$-Killing potentials, which, by definition, is the space of function $f\in \mathcal{C}^{\infty}(M)^T$ such that the hamiltonian vector field $X:=\tilde{\omega}^{-1}(df)$ is holomorphic. Futaki and Mabuchi \cite{FM} showed that  $\Pi_{\tilde{\omega}}\big(Scal(\tilde{\omega})\big)$ does not depend on the chosen $T$-invariant K\"ahler metric $\tilde{\omega}$ in $[\tilde{\omega}_0]$. Therefore, with respect to the normalized moment map $m_{\tilde{\omega}} : M \longrightarrow Lie(T)^*$, see (\ref{normalizing-moment-map}), one can write

\begin{equation}{\label{affin-extremal}}
     \Pi_{\tilde \omega}\big(Scal(\tilde \omega)\big)=\langle \xi_{\textnormal{ext}}, m_{\tilde{\omega}} \rangle + c_{\textnormal{ext}} =:\ell_{\textnormal{ext}}(m_{\tilde{\omega}})
\end{equation}

\noindent where $\xi_{\textnormal{ext}} \in Lie(T)$, $c_{\textnormal{ext}} \in \R$ and $\ell_{\textnormal{ext}} \in \mathrm{Aff}\big(Lie(T)^*\big)$. See \cite[Lemma 1]{AL} for more details.

Assume now $(M,J,\tilde{\omega},\mathbb{T})$ is a semi-simple principal toric fibration. Then by \cite[Proposition 1]{VA3}, the extremal vector field is tangent to the fibers, i.e. $\xi_{\mathrm{ext}} \in \mathfrak{t}$.

\begin{prop}{\label{extremal-vector-fields}}
Let $(M,J)$ be a semi-simple principal toric fibration and $T_S$ be a maximal torus in the isometry group of $g_S:=\sum_{a=1}^k g_a$, where $g_a$ is the Riemannian metric of $\omega_a$. Any compatible K\"ahler metric $\tilde{\omega}$ on $(M,J)$ is invariant by the action of a maximal torus $T \subset \aut_{\red}(M)$ such that there exists an exact sequence

 \begin{equation}{\label{exact-sequence}}
     Id \longrightarrow \mathbb{T}_M \longrightarrow T \longrightarrow T_S \rightarrow Id,
 \end{equation}
 
 \noindent where $T_S$ is a maximal torus in $\aut_{\red}(S)$. Moreover, the extremal vector field $\xi_{\textnormal{ext}}$ belongs in the Lie algebra $\mathfrak{t}$ of $\mathbb{T}_M$.
 \end{prop}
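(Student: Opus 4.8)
The plan is to construct the torus $T$ by hand from the bundle data and then to locate $\xi_{\textnormal{ext}}$ directly from the curvature identity $(\ref{weighted-scalarcurv})$ via a Fubini argument over $M\to S$, rather than analysing $\autred(M)$ in the abstract. First I would settle the base: since each $(S_a,J_a,\omega_a)$ is cscK, Matsushima--Lichnerowicz gives that $\mathrm{Isom}_0(g_a)$ acts holomorphically and is maximal compact in $\autred(S_a)$, so $T_{S_a}$ is at once a maximal torus of $\mathrm{Isom}(g_a)$ and of $\autred(S_a)$, and $T_S=\prod_{a=1}^k T_{S_a}$ is a maximal torus of $\autred(S)$. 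Every $\sigma\in T_S$ is holomorphic and fixes each $\omega_a$, hence preserves the curvature $d\theta=\sum_a\pi_S^*\omega_a\otimes p_a$ of $(\ref{first-chern})$.

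Next I would build the lift. Writing $X_a$ for a generator of $T_{S_a}$ and $h_a$ for its mean-zero $\omega_a$-Hamiltonian, the $\theta$-preserving lift to $Q$ is $\widetilde X_a=X_a^{\mathcal H}-h_a\,p_a^M$, where $X_a^{\mathcal H}$ is the horizontal lift and $p_a^M$ the fundamental field of $p_a\in\mathfrak t$: indeed $\mathcal L_{X_a^{\mathcal H}}\theta=\iota_{X_a^{\mathcal H}}d\theta=dh_a\otimes p_a$ while $\mathcal L_{h_a p_a^M}\theta=d(h_a p_a)=dh_a\otimes p_a$, so the two cancel. Because the $X_a$ commute and $d\theta(X_a,X_b)=\sum_c\omega_c(X_a,X_b)p_c$ vanishes (the $X_a$ live in distinct factors), a direct bracket computation gives $[\widetilde X_a,\widetilde X_b]=0$ and $[\widetilde X_a,\xi^M]=0$ for $\xi\in\mathfrak t$. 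The fields $\widetilde X_a$ are holomorphic, $\mathbb T$-equivariant and Killing for $\tilde\omega$ with zeros, hence lie in $\mathfrak{aut}_{\red}(M)$; they descend to $M=Q\times_{\mathbb T}V$ (trivially on the fibre) and preserve $\tilde\omega$, whose ingredients are all $\sigma$- and $\theta$-invariant. Exponentiating $\mathfrak t\oplus\mathrm{span}\{\widetilde X_a\}$ produces a compact connected abelian group $T$, i.e. a torus, and the bundle projection $M\to S$ induces $T\to T_S$ with kernel exactly the fibrewise $\mathbb T_M$, yielding the exact sequence $(\ref{exact-sequence})$. For maximality of $T$ in $\autred(M)$ I would invoke the structure of the generalized Calabi construction in \cite{VA2,VA3,VA4}: in the semi-simple rigid, no blow-down situation reduced automorphisms preserve the holomorphic fibration, so $\autred(M)$ maps to $\autred(S)$ with kernel the fibre automorphisms; since $V$ is toric its torus $\mathbb T_V$ is maximal in $\autred(V)$, whence any torus of $\autred(M)$ has dimension at most $\ell+\dim T_S=\dim T$.

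For the last assertion I would compute the momentum of $\widetilde X_a$. Using the above formula for $\widetilde X_a$ together with $(\ref{metriccalabidata})$ one finds
\begin{equation*}
\iota_{\widetilde X_a}\tilde\omega=d\big(h_a(\langle p_a,m_\omega\rangle+c_a)\big),
\end{equation*}
so the $\tilde\omega$-momentum of the base direction $\widetilde X_a$ is $h_a(\langle p_a,m_\omega\rangle+c_a)$, a product of the mean-zero function $h_a$ on $S_a$ with a function of the fibre momentum $m_\omega$; its integral over $M$ vanishes by $(\ref{volume-form})$ and Fubini, so this momentum is already mean-zero. By $(\ref{weighted-scalarcurv})$, $Scal(\tilde\omega)$ is a function of $m_\omega$ alone, hence the Futaki--Mabuchi pairing $\langle\xi,\widetilde X_a\rangle$ for $\xi\in\mathfrak t$ and the Futaki invariant $\mathcal F(\widetilde X_a)=\int_M(Scal(\tilde\omega)-\overline{Scal})\,h_a(\langle p_a,m_\omega\rangle+c_a)\,\tilde\omega^{[n]}$ both factorize over $M\to S=\prod_b S_b$ with $S_a$-factor $\int_{S_a}h_a\,\omega_a^{[d_a]}=0$. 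Thus $\mathrm{span}\{\widetilde X_a\}$ is Futaki--Mabuchi orthogonal to $\mathfrak t$ and annihilated by $\mathcal F$; since $\xi_{\textnormal{ext}}$ of $(\ref{affin-extremal})$ is the Futaki--Mabuchi dual of $\mathcal F$, it has no component along $\mathrm{span}\{\widetilde X_a\}$, i.e. $\xi_{\textnormal{ext}}\in\mathfrak t$, recovering \cite[Proposition 1]{VA3}.

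The main obstacle is maximality of $T$: ruling out larger tori in $\autred(M)$ is the one step that cannot be done by the explicit metric alone and genuinely needs the fibration-rigidity results of \cite{VA2,VA4} (reduced automorphisms respect the bundle, and the generic toric fibre contributes exactly $\mathbb T_V$). The remaining ingredients — the lift, the exact sequence, and the location of $\xi_{\textnormal{ext}}$ — are direct computations with $(\ref{metriccalabidata})$ and $(\ref{volume-form})$.
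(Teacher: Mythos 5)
The paper itself never proves this proposition: it is imported wholesale from \cite[Proposition 1]{VA3} together with the structure theory recalled in Remark \ref{remark-semi-simple}, so there is no in-paper argument to compare against; what you have written is, in effect, a reconstruction of the cited source's argument, and its core computations are correct. The connection-preserving lift $\widetilde X_a=X_a^{\mathcal H}-h_a p_a^M$ is the right object (your sign conventions are internally consistent: with $\iota_{X_a}\omega_a=dh_a$ one has $\mathcal L_{X_a^{\mathcal H}}\theta=\pi_S^*dh_a\otimes p_a=\mathcal L_{h_a p_a^M}\theta$, and then indeed $\iota_{\widetilde X_a}\tilde\omega=d\bigl(h_a(\langle p_a,m_\omega\rangle+c_a)\bigr)$). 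The Fubini factorization using (\ref{volume-form}) correctly shows these potentials are mean-zero, Futaki-null, and Futaki--Mabuchi-orthogonal to $\mathfrak t$ --- here you rightly use that $Scal(\tilde\omega)$ and the potentials of $\mathfrak t$ are functions of $m_\omega$ alone, which is (\ref{weighted-scalarcurv}) --- and the final duality step is sound once one adds that the Futaki--Mabuchi pairing is positive definite on normalized Killing potentials, so that the component of $\xi_{\textnormal{ext}}$ along $\mathrm{span}\{\widetilde X_a\}$ is forced to vanish. This is exactly how the tangency $\xi_{\textnormal{ext}}\in\mathfrak t$ is obtained in \cite{VA3}.

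There are, however, two soft spots in the group-theoretic part. First, ``exponentiating $\mathfrak t\oplus\mathrm{span}\{\widetilde X_a\}$'' does not by itself produce a torus: lifted one-parameter subgroups of a principal $\mathbb T$-bundle are generically only $\mathbb R$-actions (the lift of a loop in $T_S$ can close up to a nontrivial element of $\mathbb T$), so the exponential image need not be closed. You must instead take the closure $T$ of the generated subgroup inside the compact group $\mathrm{Isom}(M,\tilde g)\cap\mathrm{Aut}(M,J)$; this is a torus, but then exactness of (\ref{exact-sequence}) requires a further argument that taking the closure creates nothing new in the vertical direction, i.e.\ that $\ker(T\to T_S)$ is exactly $\mathbb T_M$ and not a larger (possibly disconnected) group. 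The clean way to see this: a kernel element preserves every fiber and centralizes $\mathbb T_V$ there, hence acts on each fiber $V_s$ by some $t_s\in\mathbb T_V$ (automorphisms of a toric fiber commuting with $\mathbb T_V$ preserve the open orbit, act there as elements of $\mathbb T_V^{\mathbb C}$, and isometry forces them into $\mathbb T_V$); holomorphy of the resulting bundle map then forces $s\mapsto t_s$ to be a holomorphic map of the compact manifold $S$ into $\mathbb T^{\mathbb C}$ with values in $\mathbb T$, hence constant, so the element lies in $\mathbb T_M$. This also pins $\dim T=\ell+\dim T_S$, so no dimension is created by the closure. Second, as you acknowledge, maximality of $T$ in $\autred(M)$ genuinely needs the rigidity theory of \cite{VA2, VA4} (reduced automorphisms commuting with $\mathbb T_M$ preserve the stable quotient fibration, and one must also know that tori in the induced base action land in $\autred(S)$, which holds because holomorphic fields with zeros project to holomorphic fields with zeros); since the paper itself defers the entire proposition to \cite{VA3}, this deferral is legitimate, though it is the one step your proof does not make self-contained.
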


\noindent As shown in \cite{VA3}, we get from (\ref{weighted-scalarcurv}) and (\ref{affin-extremal}):

\begin{corollary}{\label{equivalence-scalar}}
A compatible K\"ahler metric $\tilde{\omega}$  on $(M,J)$ is extremal if and only if its corresponding toric K\"ahler metric $\omega$ on $(V,J_V)$ is $(\mathrm{v},\w)$-cscK   in the sense of Definition \ref{weighted-cscK-metric}, where the weights are given by

\begin{equation}{\label{weights}}
\begin{split}
	\mathrm{v}(x)  &= \prod_{a=1}^k \big(\langle p_a,x \rangle +c_a  \big)^{d_a}\\
	\w(x)&=\v(x)\left(  \ell_{\textnormal{ext}}(x) - \sum_{a=1}^k \frac{Scal_a}{\langle p_a, x \rangle +c_a} \right),
\end{split}
\end{equation}

\noindent where $\ell_{\textnormal{ext}} \in \mathrm{Aff}(P)$ is defined in $(\ref{affin-extremal})$. 
\end{corollary}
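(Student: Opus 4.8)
The plan is to derive the equivalence as a purely algebraic consequence of three facts already in hand: the curvature formula (\ref{weighted-scalarcurv}) for a compatible metric, the characterization (\ref{affin-extremal}) of extremality via the Futaki--Mabuchi projection, and the tangency $\xi_{\textnormal{ext}}\in\mathfrak{t}$ from Proposition \ref{extremal-vector-fields}. No new analysis should be required; the work is to unwind the definitions so that the two equations become literally the same identity read on the fiber.

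First I would recall what extremality means for the $T$-invariant compatible metric $\tilde{\omega}$: the metric is extremal exactly when $Scal(\tilde{\omega})$ is a Killing potential, i.e. when it is fixed by the projection $\Pi_{\tilde{\omega}}$ onto $P^T_{\tilde{\omega}}$. By the Futaki--Mabuchi invariance recalled before (\ref{affin-extremal}), this is equivalent to
\[
Scal(\tilde{\omega}) = \ell_{\textnormal{ext}}(m_{\tilde{\omega}}),
\]
with $\ell_{\textnormal{ext}}$ the metric-independent extremal affine function. Since a compatible metric is invariant under the maximal torus $T$ of Proposition \ref{extremal-vector-fields}, this criterion applies. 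I would then invoke $\xi_{\textnormal{ext}}\in\mathfrak{t}=Lie(\mathbb{T}_M)$, which forces $\ell_{\textnormal{ext}}=\langle\xi_{\textnormal{ext}},\cdot\rangle+c_{\textnormal{ext}}$ to factor through the projection $Lie(T)^*\to\mathfrak{t}^*$ and hence to define an affine-linear function on $P=m_\omega(V)\subset\mathfrak{t}^*$. Using the identification of the $\mathbb{T}$-momentum map of the compatible metric with the fiber momentum map $m_\omega$, the extremality equation becomes $Scal(\tilde{\omega})=\ell_{\textnormal{ext}}(m_\omega)$, an identity between $\mathbb{T}$-invariant functions pulled back from $P$; in particular it can be tested on a single fiber $(V,\omega)$.

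Next I would substitute the curvature formula (\ref{weighted-scalarcurv}) into $Scal(\tilde{\omega})=\ell_{\textnormal{ext}}(m_\omega)$, isolate the term $\tfrac{1}{\mathrm{v}(m_\omega)}Scal_{\mathrm{v}}(\omega)$, and multiply through by the strictly positive function $\mathrm{v}(m_\omega)$ (positive because each factor $\langle p_a,m_\omega\rangle+c_a$ was chosen positive on $P$). This recovers exactly the $(\mathrm{v},\w)$-cscK equation (\ref{weighted-cscK-metric}), namely $Scal_{\mathrm{v}}(\omega)=\w(m_\omega)$, with $\mathrm{v}$ and $\w$ given by (\ref{weights}). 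Every manipulation here is reversible—the projection criterion is itself an equivalence, and multiplication by the nowhere-vanishing $\mathrm{v}(m_\omega)$ is invertible—so both implications are obtained simultaneously.

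I expect the only genuine subtlety to be bookkeeping of normalizations rather than any estimate. One must check that the affine function $\ell_{\textnormal{ext}}$ produced by the $L^2$-projection on $(M,[\tilde{\omega}])$ restricts, under the momentum-map identification, to precisely the $\ell_{\textnormal{ext}}\in\mathrm{Aff}(P)$ appearing in (\ref{weights}); this is guaranteed by the normalization (\ref{normalizing-moment-map}) together with $\xi_{\textnormal{ext}}\in\mathfrak{t}$, which ensures that the base directions of $Lie(T)^*$ do not enter. The substantive geometric inputs—the scalar-curvature formula (\ref{weighted-scalarcurv}) and the tangency of the extremal field—have already been supplied by the cited results, so the corollary itself reduces to this consistency check and the one-line substitution above.
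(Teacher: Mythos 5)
Your proof is correct and follows essentially the same route as the paper, which presents the corollary as an immediate consequence of the scalar curvature formula (\ref{weighted-scalarcurv}), the Futaki--Mabuchi characterization (\ref{affin-extremal}), and the tangency $\xi_{\textnormal{ext}}\in\mathfrak{t}$ from Proposition \ref{extremal-vector-fields} --- precisely the substitution and multiplication by $\mathrm{v}(m_\omega)>0$ that you carry out. One small remark: equation (\ref{weighted-scalarcurv}) as printed carries a sign inconsistency with (\ref{weights}) (the fiber term should enter with a plus sign, as comparison with \cite{VA3} and with the closedness argument confirms), so your substitution implicitly uses the corrected formula; this is a typo in the paper rather than a gap in your argument.
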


\subsection{The space of functions}

Any $\mathbb{T}_M$-invariant smooth function on $M$ pulls back to a $\mathbb{T}_Q\times \mathbb{T}_V$-invariant function on $Q\times V,$ and therefore descends to a $\mathbb{T}_V$-invariant smooth function on $S\times V$ (see $\S$\ref{subsection-torique-rigide}). This gives rise to an isomorphism of Fréchet spaces

\begin{equation}{\label{identification-functio-cartesian}}
C^{\infty}(M)^{\mathbb{T}} \cong C^{\infty}(S \times V)^{\mathbb{T}_V},
\end{equation}

\noindent which we shall tacitly use throughout the paper. Moreover, by (\ref{identification-M0}) we  get 

\begin{equation*}
    C^{\infty}(M^0)^{\mathbb{T}} \cong C^{\infty}(S \times P^0)
\end{equation*}

Given $f\in \mathcal{C}^{\infty}(M)^{\mathbb{T}}$, for any $s\in S$, we denote by $f_s \in \mathcal{C}^{\infty}(V)^{\mathbb{T}}$ the induced smooth function on $V$ with respect to the identification (\ref{identification-functio-cartesian}). Similarly,  for any $x \in V$, we denote by $f_x \in \mathcal{C}^{\infty}(S)$ the induced smooth function on $S$. It follows that on $\mathcal{C}^{\infty}(M)^{\mathbb{T}}$, the differential operator $d$ splits as $d=d_S+d_V$, where $d_S$ and $d_V$, is the exterior derivative on $S$ and $V$ respectively. We get

\begin{equation}{\label{function-from-fiber}}
\mathcal{C}^{\infty}(V)^{\mathbb{T}}\cong\{ f \in \mathcal{C}^{\infty}(M)^{\mathbb{T}} \text{ } | \text{ } d_Sf_x=0 \text{ }\forall x \in V\},
\end{equation}

\noindent showing that  $\mathcal{C}^{\infty}(V)^{\mathbb{T}}$ is closed in $ \mathcal{C}^{\infty}(M)^{\mathbb{T}}$ for the Fréchet topology.

\subsection{The space of compatible potentials}{\label{section-distance}} We fix a reference K\"ahler metric $\tilde{\omega}_0$ on $(M,J)$, its corresponding K\"ahler metric $\omega_0$ on $(V,J_V)$ and the weights $(\mathrm{v},\mathrm{w})$ given by (\ref{weights}). We denote by $\mathcal{K}(V, \omega_0)^{\mathbb{T}}$ the space of smooth K\"ahler potentials on $V$ relative to $\omega_0$ and denote by $\omega_{\varphi}=\omega_0+d_Vd_V^c\varphi$ the corresponding K\"ahler metric on $(V,J_V)$. Similarly, we denote by $\mathcal{K}(M,\tilde{\omega}_0)^{\mathbb{T}}$ the space of smooth $\mathbb{T}$-invariant K\"ahler potentials on $(M,J)$ relative to $\tilde{\omega}_0$ and we denote by $\tilde{\omega}_{\varphi}=\tilde{\omega}_0+dd^c\varphi$ the corresponding K\"ahler metric. The following Lemma is established in \cite[Lemma 7]{VA3}.

\begin{lemma}{\label{changeofmetric}}
Let $\omega_{\varphi}=\omega_0 +d_Vd^c_V \varphi$ be a $\mathbb{T}$-invariant K\"ahler metric on $(V,J_V)$ and denote by $m_{\varphi}$ the moment map which satisfies normalization (\ref{normalizing-moment-map}). Then the compatible K\"ahler metric $\tilde{\omega}_{\varphi}$ induced by $\omega_{\varphi}$ on $M$ is given by $\tilde{\omega}_{\varphi}=\tilde{\omega}_0+dd^c\varphi$, where $\varphi$ is seen as a smooth function on $M$ via $(\ref{identification-functio-cartesian})$.

\end{lemma}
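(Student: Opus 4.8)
The plan is to verify the identity on the dense open subset $M^0 = Q\times_{\mathbb{T}} V^0 \cong Q\times P^0$ and then extend it to all of $M$ by continuity, since both $\tilde\omega_\varphi$ and $\tilde\omega_0 + dd^c\varphi$ are smooth forms on $M$ (the former because $\omega_\varphi$ compactifies to $V$, the latter because $\varphi$, seen through (\ref{function-from-fiber}), is a smooth $\mathbb{T}$-invariant function on $M$). On $M^0$ I would plug the normalized momentum $m_\varphi$ into the explicit compatible-metric formula (\ref{metriccalabidata}), so that, writing $u := m_\varphi - m_0$,
\begin{equation*}
\tilde\omega_\varphi - \tilde\omega_0 = \sum_{a=1}^k \langle p_a, u\rangle\,\omega_a + \langle du\wedge\theta\rangle .
\end{equation*}
The statement then reduces to showing that the right-hand side equals $dd^c\varphi$ on $M^0$.

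The heart of the argument is the pointwise identity
\begin{equation*}
d^c\varphi = \langle m_\varphi - m_0,\,\theta\rangle = \langle u,\theta\rangle \qquad\text{on } M^0 .
\end{equation*}
To establish it I would use the splitting (\ref{splitting}) $TM^0 = \mathcal{H}\oplus\mathcal{V}$, with $\mathcal{V}\cong\mathfrak{t}\oplus\mathfrak{t}^*$ spanned by the momentum directions and the fundamental vector fields $\xi^M$, together with $J_M = J_S\oplus J_V$. Since $\varphi$ is $\mathbb{T}$-invariant and descends from $V^0$ to a function of the momentum $m_\omega$ alone, $d\varphi$ annihilates both the $\xi^M$ and $\mathcal{H}$; feeding this through $J_M$ shows that $d^c\varphi$ kills $\mathcal{H}$ and the momentum directions, so that only its pairing with $\xi^M$ survives. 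Evaluating that pairing with the fiberwise toric complex structure in momentum-angle coordinates and comparing with the normalization (\ref{normalizing-moment-map}) identifies it with the momentum shift $u$, which is exactly the coefficient of $\theta$.

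Granting this identity, I would differentiate and use the curvature equation (\ref{first-chern}), $d\theta = \sum_a \pi_S^*\omega_a\otimes p_a$, to obtain
\begin{equation*}
dd^c\varphi = \langle du\wedge\theta\rangle + \langle u, d\theta\rangle = \langle du\wedge\theta\rangle + \sum_{a=1}^k\langle p_a, u\rangle\,\omega_a ,
\end{equation*}
which is precisely the difference $\tilde\omega_\varphi - \tilde\omega_0$ computed above; here one uses that $u = u(m_\omega)$ is itself $\mathbb{T}$-invariant and constant along $\mathcal{H}$, so $du$ carries only momentum components and produces no spurious base or angular terms. The main obstacle is the identity for $d^c\varphi$: it requires careful bookkeeping of the sign conventions for $d^c$ and for the momentum normalization (\ref{normalizing-moment-map}), and of how the fiberwise $J_V$ sits inside $J_M = J_S\oplus J_V$ under the identification (\ref{identification-M0}). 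Once this is settled, the remaining computation and the passage from $M^0$ to $M$ are routine.
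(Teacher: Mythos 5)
Your proof is correct: the key identity $d^c\varphi=\langle m_\varphi-m_0,\theta\rangle$ on $M^0$ follows exactly as you say from the normalization (\ref{normalizing-moment-map}) together with the fact that $d^c\varphi$ and $\theta$ both annihilate $\mathcal{H}$ and the $J\xi^M$-directions, and then $dd^c\varphi=\langle d(m_\varphi-m_0)\wedge\theta\rangle+\sum_{a=1}^k\langle p_a,m_\varphi-m_0\rangle\,\pi_S^*\omega_a$ via the curvature equation (\ref{first-chern}), which is precisely $\tilde\omega_\varphi-\tilde\omega_0$ on the dense set $M^0$, hence on $M$ by smoothness of both sides. Note that the paper gives no proof of its own — it cites \cite[Lemma 7]{VA3} — and your computation is essentially the argument underlying that reference, so there is nothing to compare beyond recording that your route is the expected one.
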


 It follows that $\mathcal{K}(V, \omega_0)^{\mathbb{T}}$ parametrizes the compatible K\"ahler metric on $(M,J)$ given explicitly by $(\ref{metriccalabidata})$ and will be referred to as \textit{the space of compatible K\"ahler potentials}. It follows from Proposition \ref{extremal-vector-fields} and Lemma \ref{changeofmetric}:  

\begin{corollary}{\label{change-of-metric-cor}}
There is an embedding of Frechet spaces $ \mathcal{K}(V,\omega_0)^{\mathbb{T}} \hookrightarrow \mathcal{K}(M,\tilde{\omega}_0)^{T}$.
\end{corollary}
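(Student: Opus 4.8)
The plan is to exhibit the map as the restriction, to the open convex set of Kähler potentials, of the closed embedding of Fréchet spaces $\mathcal{C}^{\infty}(V)^{\mathbb{T}} \hookrightarrow \mathcal{C}^{\infty}(M)^{\mathbb{T}}$ furnished by $(\ref{function-from-fiber})$. Given $\varphi \in \mathcal{K}(V,\omega_0)^{\mathbb{T}}$, I first regard it as a smooth $\mathbb{T}$-invariant function on $M$ via $(\ref{identification-functio-cartesian})$; by $(\ref{function-from-fiber})$ it satisfies $d_S\varphi_x = 0$ for every $x$, i.e. it is constant along the base $S$. Two points then have to be checked: that $\varphi$ is genuinely an $\tilde{\omega}_0$-Kähler potential on $M$, and that it is invariant under the full maximal torus $T$ and not merely under $\mathbb{T}_M$.

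The first point is immediate from Lemma $\ref{changeofmetric}$: the compatible Kähler metric induced by $\omega_{\varphi} = \omega_0 + d_Vd_V^c\varphi$ is exactly $\tilde{\omega}_{\varphi} = \tilde{\omega}_0 + dd^c\varphi$, a positive $(1,1)$-form in the class $[\tilde{\omega}_0]$ given by the generalized Calabi ansatz $(\ref{metriccalabidata})$. Hence $\varphi \in \mathcal{K}(M,\tilde{\omega}_0)$, and its image is precisely the space of compatible Kähler potentials.

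For the second point I would use Proposition $\ref{extremal-vector-fields}$: both the reference compatible metric $\tilde{\omega}_0$ and $\tilde{\omega}_{\varphi}$ are invariant under the same maximal torus $T$ fitting into the exact sequence $(\ref{exact-sequence})$, so the difference $dd^c\varphi = \tilde{\omega}_{\varphi} - \tilde{\omega}_0$ is $T$-invariant. For each generator $\xi$ of $T$, whose fundamental vector field $\xi^M$ is real holomorphic, this gives $dd^c(\xi^M\cdot\varphi) = \mathcal{L}_{\xi^M}(dd^c\varphi) = 0$; since $M$ is compact Kähler, the function $\xi^M\cdot\varphi$ is therefore constant, and integrating along the closed orbits of $\xi$ forces this constant to vanish. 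Thus $\xi^M\cdot\varphi = 0$ for all $\xi \in \mathrm{Lie}(T)$, so $\varphi$ is $T$-invariant and $\varphi \in \mathcal{K}(M,\tilde{\omega}_0)^{T}$.

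Finally, the resulting map is the restriction of the closed Fréchet embedding $(\ref{function-from-fiber})$ to an open subset of its domain, so injectivity and the homeomorphism-onto-image property are inherited, and we obtain an embedding of Fréchet spaces. The only genuinely delicate step is the $T$-invariance: one must rule out a nonzero additive constant in $\xi^M\cdot\varphi$, which is where compactness of $M$ together with the torus (hence closed-orbit) nature of the $T$-action enters; alternatively, one can argue structurally that a function depending only on the fiber moment coordinates is annihilated by the lifts to $M$ of the generators of $T_S$ in $(\ref{exact-sequence})$.
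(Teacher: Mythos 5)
Your proof is correct and takes essentially the same route as the paper, which offers no written argument beyond deriving the corollary from Lemma \ref{changeofmetric} and Proposition \ref{extremal-vector-fields} --- precisely the two results you invoke. Your pluriharmonicity argument (with compactness killing the constant) for the $T$-invariance of $\varphi$, together with your closing structural remark that $\varphi$ factors through the moment map which $T$ preserves, is a valid filling-in of the details the paper leaves implicit.
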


\section{Weighted distance, functionals and operators}{\label{section-distance}}

\subsection{Weighted distance}

Thanks to the work of Mabuchi \cite{TM1, TM} it is well-known, that $\mathcal{K}(M,\tilde{\omega}_0)^{\mathbb{T}}$ is an infinite dimensional Riemannian manifold when equipped with the Mabuchi metric: 

\begin{equation*}
  \langle \dot{\varphi}_0, \dot{\varphi}_1 \rangle_{\varphi} = \int_M \dot{\varphi}_0 \dot{\varphi}_1 \tilde{\omega}_{\varphi}^{[n]} \text{ } \text{ } \forall \dot{\varphi}_0, \dot{\varphi}_1 \in T_{\varphi} \mathcal{K}(M, \tilde{\omega}_0)^{\mathbb{T}}.
\end{equation*}

\noindent Furthermore,  a path $(\varphi_t)_{t\in[0,1]} \in  \mathcal{K}(M, \tilde{\omega}_0)^{\mathbb{T}} $ connecting two points is a smooth geodesic if and only if

\begin{equation}{\label{geodesic}}
    \Ddot{\varphi}_t - |d\dot{\varphi}_t|^2_{\varphi_t}=0.
\end{equation}

The following result is proven in \cite[Lemma 5.6]{VA6} in the more general context of semi-simple principal fiber bundles and follows easily from the expression $(\ref{volume-form})$ of the volume form of a compatible K\"ahler metric.

\begin{lemma}{\label{equality-mabuchi-norm}}
Let $\varphi \in \mathcal{K}(V, \omega_0)^{\mathbb{T}}$ and $f \in T_{\varphi}\mathcal{K}(V, \omega_0)^{\mathbb{T}}$, also viewed as an element of $T_{\varphi}\mathcal{K}(M,\tilde{\omega}_0)^{\mathbb{T}}$. Then

\begin{equation*}
 |df|^2_{\tilde{\omega}_{\varphi}}=|df|^2_{{\omega}_{\varphi}}.
 \end{equation*}

\noindent In particular, $\mathcal{K}(V,\omega_0)^{\mathbb{T}}$ is a totally geodesic submanifold of $\mathcal{K}(M,\tilde{\omega}_0)^{\mathbb{T}}$ with respect to the Mabuchi metric.

\end{lemma}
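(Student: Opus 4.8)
The plan is to pass to the dense open set $M^0$ and exploit the bundle splitting (\ref{splitting}), $TM^0 = \mathcal{H}\oplus\mathcal{V}$, to show that the compatible metric $\tilde{g}_\varphi := \tilde{\omega}_\varphi(\cdot,J\cdot)$ is block-diagonal for this decomposition, with vertical block equal to the fibre metric $g_\varphi := \omega_\varphi(\cdot,J_V\cdot)$. Once this is established the norm identity becomes a one-line consequence of the fact that $df$ is a purely vertical covector, and the total geodesy follows from the special form of the geodesic equation (\ref{geodesic}).

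First I would record the block structure of $\tilde{\omega}_\varphi$. By Lemma \ref{changeofmetric} the metric $\tilde{\omega}_\varphi$ is again compatible, so on $M^0$ it is given by (\ref{metriccalabidata}) with $m_\omega$ replaced by the normalized momentum map $m_\varphi$ of (\ref{normalizing-moment-map}) and $\omega$ by $\omega_\varphi$, namely $\tilde{\omega}_\varphi=\sum_{a=1}^k(\langle p_a,m_\varphi\rangle+c_a)\omega_a+\langle dm_\varphi\wedge\theta\rangle$. The first summand is a pullback of forms from $S$, hence vanishes as soon as one argument is vertical; the forms $dm_\varphi$ and $\theta$ both annihilate $\mathcal{H}=\mathrm{ann}(\theta)$ (recall $m_\varphi$ is a fibre coordinate, so $dm_\varphi$ kills $\mathcal{H}$), whence $\langle dm_\varphi\wedge\theta\rangle$ is purely vertical. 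Thus $\tilde{\omega}_\varphi$ is block-diagonal, and since $J=J_S\oplus J_V$ preserves the splitting, so is $\tilde{g}_\varphi$, with $\mathcal{H}\perp_{\tilde{g}_\varphi}\mathcal{V}$. As noted just after (\ref{toric_symplectic}), $\langle dm_\varphi\wedge\theta\rangle$ restricts to the toric form $\langle dm_\varphi\wedge dt\rangle=\omega_\varphi$ on each fibre, so the vertical block of $\tilde{\omega}_\varphi$ is $\omega_\varphi$; combined with $J|_{\mathcal{V}}=J_V$ this gives $\tilde{g}_\varphi|_{\mathcal{V}}=g_\varphi$.

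The norm identity then follows. By (\ref{function-from-fiber}) an element $f\in\mathcal{C}^\infty(V)^{\mathbb{T}}$ satisfies $d_Sf=0$, so $df=d_Vf$ annihilates $\mathcal{H}$, i.e. $df$ is a section of the annihilator $\mathcal{V}^*$. Since $\tilde{g}_\varphi$ is block-diagonal, the induced cometric on $T^*M^0$ is block-diagonal as well, and on the summand $\mathcal{V}^*$ it is the inverse of $\tilde{g}_\varphi|_{\mathcal{V}}=g_\varphi$. Evaluating the cometric on the vertical covector $df$ therefore gives $|df|^2_{\tilde{\omega}_\varphi}=|df|^2_{\omega_\varphi}$ pointwise on $M^0$; as both sides are smooth functions on $M$ agreeing on the dense set $M^0$, the equality holds on all of $M$.

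For the total geodesy I would use that, along a smooth curve, the $\tilde{\omega}_0$-Mabuchi covariant acceleration is $\ddot{\varphi}_t-|d\dot{\varphi}_t|^2_{\tilde{\omega}_{\varphi_t}}$, which is what vanishes in (\ref{geodesic}). For a curve $(\varphi_t)$ contained in $\mathcal{K}(V,\omega_0)^{\mathbb{T}}$ one has $\dot{\varphi}_t\in\mathcal{C}^\infty(V)^{\mathbb{T}}$, so the norm identity rewrites this acceleration as $\ddot{\varphi}_t-|d\dot{\varphi}_t|^2_{\omega_{\varphi_t}}$; both terms lie in $\mathcal{C}^\infty(V)^{\mathbb{T}}$ — the first because this is a closed subspace stable under $t$-differentiation, the second because it is a function on $V$ — so the covariant acceleration stays tangent to the submanifold. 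Hence the second fundamental form of $\mathcal{K}(V,\omega_0)^{\mathbb{T}}$ vanishes and the submanifold is totally geodesic. The one genuinely delicate point, and the main obstacle, is the precise identification $\tilde{g}_\varphi|_{\mathcal{V}}=g_\varphi$ on $M^0=Q\times_{\mathbb{T}}V^0$: one must verify carefully that $\theta$ and $dm_\varphi$ are vertical, that $\langle dm_\varphi\wedge\theta\rangle$ restricts to $\omega_\varphi$ so that no horizontal–vertical cross terms survive, and that the $\varphi$-dependence through the normalization $m_\varphi=m_0+d^c\varphi$ is correctly carried by Lemma \ref{changeofmetric}; the remainder is bookkeeping with the splitting (\ref{splitting}).
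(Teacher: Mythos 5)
Your proof is correct and follows essentially the same route as the paper's source for this lemma (the paper defers to \cite[Lemma 5.6]{VA6}): one verifies from the explicit form (\ref{metriccalabidata}) that a compatible metric is block-diagonal for the splitting $TM^0=\mathcal{H}\oplus\mathcal{V}$ with vertical block the fibre metric, so that $|df|^2_{\tilde{\omega}_\varphi}=|df|^2_{\omega_\varphi}$ for fibre functions, and then the ambient geodesic equation (\ref{geodesic}) restricted to $\mathcal{K}(V,\omega_0)^{\mathbb{T}}$ coincides with the fibre one. Your verification that $dm_\varphi$ and $\theta$ both annihilate $\mathcal{H}$, and the closedness of $\mathcal{C}^\infty(V)^{\mathbb{T}}$ in $\mathcal{C}^\infty(M)^{\mathbb{T}}$ used for the tangency of the covariant acceleration, are exactly the points the cited argument rests on.
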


 In \cite{DG}, Guan showed the existence of a smooth geodesic between two  K\"ahler potentials on a toric manifold. The same argument shows the geodesic connectedness of  two elements $\varphi_0$, $\varphi_1 \in \mathcal{K}(V,\omega_0)^{\mathbb{T}} \subset \mathcal{K}(M,\tilde{\omega}_0)^{\mathbb{T}}$.  
 
\begin{remark}

On a general compact  K\"ahler manifold $(M, J, \omega_0)$, Darvas \cite{DT} introduced the distance $d_1$ as

\begin{equation}{\label{distance-d1}}
d_1(\varphi_0,\varphi_1):=\inf_{\varphi_t} \int_0^1 \int_M | \dot{\varphi}_t| \omega^{[n]}_t,
\end{equation}    

\noindent where $\omega_t^{[n]}$ is the volume form associated to the metric $\omega_t=\omega+dd^c\varphi_t$ and the infimum is taken over the space of smooth curves $\{\varphi_t\}_{t\in[0,1]} \subset  \mathcal{K}(M,\omega_0)^{\mathbb{T}}$ joining $\varphi_0$ to $\varphi_1$. In the above formula, $\dot{\varphi}_t$ is the variation of $\varphi_t$ with respect to $t$. It is showed in \cite{TD} that  $d_1(\varphi_0, \varphi_1)$ equals to the length of the unique (weak) $C^{1, \bar 1}$ geodesic \cite{XC} joining $\varphi_0$ and $\varphi_1$.
\end{remark}

\begin{lemma}{\label{restriction-distance}}
The distance $d_1$ restricts (up to a positive multiplicative constant) to the distance $d_{1,\v}$ on $\mathcal{K}(V, \omega_0)^{\mathbb{T}}$, defined by

\begin{equation}{\label{distance-pondéré}}
   d^V_{1,\mathrm{v}}(\varphi_0,\varphi_1):=\inf_{\varphi_t} \int_0^1 \int_V | \dot{\varphi}_t| \mathrm{v}(m_t)\omega_t^{[\ell]}.
\end{equation}

\end{lemma}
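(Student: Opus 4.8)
The plan is to prove the two inequalities $d_1 \le C\, d^V_{1,\mathrm{v}}$ and $d_1 \ge C\, d^V_{1,\mathrm{v}}$ on $\mathcal{K}(V,\omega_0)^{\mathbb{T}}$, where $C := \int_S \omega_S^{[d]}>0$ is the positive multiplicative constant of the statement. The computational heart is a Fubini identity for compatible paths. Let $(\varphi_t)_{t\in[0,1]}\subset \mathcal{K}(V,\omega_0)^{\mathbb{T}}$ be a smooth path, regarded inside $\mathcal{K}(M,\tilde{\omega}_0)^{\mathbb{T}}$ via Lemma \ref{changeofmetric} and Corollary \ref{change-of-metric-cor}, so that $\tilde{\omega}_t=\tilde{\omega}_0+dd^c\varphi_t$ is the induced compatible metric. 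The quantities $\dot{\varphi}_t$, $\mathrm{v}(m_t)$ and $\omega_t^{[\ell]}$ all descend from the fibre $V$ and are $\mathbb{T}$-invariant, so the volume splitting (\ref{volume-form}), namely $\tilde{\omega}_t^{[n]}=\omega_S^{[d]}\wedge \mathrm{v}(m_t)\,\omega_t^{[\ell]}$, together with integration along the fibres of $\pi_S:M\to S$, gives
\[
\int_M |\dot{\varphi}_t|\,\tilde{\omega}_t^{[n]}
=\Big(\int_S \omega_S^{[d]}\Big)\int_V |\dot{\varphi}_t|\,\mathrm{v}(m_t)\,\omega_t^{[\ell]}
= C\int_V |\dot{\varphi}_t|\,\mathrm{v}(m_t)\,\omega_t^{[\ell]},
\]
the inner fibre integral being independent of the base point precisely because the integrand is fibrewise and $\mathbb{T}$-invariant. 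Integrating in $t$, the $d_1$-length of a \emph{compatible} path equals $C$ times its weighted $V$-length.

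The upper bound is then immediate: since $d_1(\varphi_0,\varphi_1)$ is an infimum over \emph{all} smooth $\mathbb{T}$-invariant paths of $M$ joining the endpoints, restricting this infimum to compatible paths and using the identity above yields $d_1(\varphi_0,\varphi_1)\le C\, d^V_{1,\mathrm{v}}(\varphi_0,\varphi_1)$.

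The substantive direction is the lower bound, which amounts to ruling out genuine shortcuts through non-compatible potentials of $M$. Here I would use that $d_1(\varphi_0,\varphi_1)$ is realized by the \emph{unique} weak $C^{1,\bar1}$ geodesic joining the endpoints, as recalled in the Remark above. On the other hand, (\ref{geodesic}) is the weak geodesic equation common to the whole Mabuchi--Darvas family of metrics, and by Guan's construction \cite{DG} there is a \emph{smooth} solution joining $\varphi_0,\varphi_1$ and lying in $\mathcal{K}(V,\omega_0)^{\mathbb{T}}$; by the total geodesy of this submanifold (Lemma \ref{equality-mabuchi-norm}) it is also a geodesic of $\mathcal{K}(M,\tilde{\omega}_0)^{\mathbb{T}}$. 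By uniqueness, the weak $C^{1,\bar1}$ geodesic coincides with this compatible smooth geodesic $\gamma$; applying the fibre-integration identity to $\gamma$ then gives $d_1(\varphi_0,\varphi_1)=C$ times the weighted $V$-length of $\gamma$, which is $\ge C\, d^V_{1,\mathrm{v}}(\varphi_0,\varphi_1)$ since $\gamma$ is one admissible competitor in the infimum (\ref{distance-pondéré}). Combining the two bounds gives $d_1 = C\, d^V_{1,\mathrm{v}}$ on $\mathcal{K}(V,\omega_0)^{\mathbb{T}}$.

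I expect the lower bound to be the main obstacle: the infimum defining $d_1$ runs over a strictly larger class of paths than the compatible ones, so one must genuinely exclude shortcuts, and the resolution hinges on pairing the identification of $d_1$ with the $C^{1,\bar1}$ geodesic length and its uniqueness against Guan's smooth geodesics inside the totally geodesic compatible subspace. The remaining work is bookkeeping—checking that Guan's hypotheses apply verbatim to the toric fibre $V$, that the induced path on $M$ qualifies as a weak geodesic, and that the fibre integral is truly base-independent.
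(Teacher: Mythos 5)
Your proof is correct and takes essentially the same route as the paper, whose own proof is exactly the remark that the lemma ``follows directly from $(\ref{volume-form})$ and the smooth geodesic connectedness of $\mathcal{K}(V,\omega_0)^{\mathbb{T}}$'' (with details deferred to \cite[Corollary 5.5]{VA6}): your Fubini identity with constant $C=\int_S\omega_S^{[d]}$ is the content of $(\ref{volume-form})$, and your lower bound via Guan's smooth geodesic \cite{DG}, the total geodesy of Lemma \ref{equality-mabuchi-norm}, and Darvas's identification of $d_1$ with the length of the unique weak $C^{1,\bar 1}$ geodesic \cite{TD} is precisely the intended use of geodesic connectedness.
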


\noindent We refer to \cite[Corollary 5.5]{VA6} for the proof. It follows also directly from $(\ref{volume-form})$ and the smooth geodesic connectedness of $\mathcal{K}(V, \omega_0)^{\mathbb{T}}$.

\subsection{Weighted functionals}{\label{section-functionals}}

\noindent We consider the Mabuchi energy on $\mathcal{K}(M,\tilde{\omega}_0)^{\mathbb{T}}$ relative to $\mathbb{T}$, characterized by its variation

\begin{equation}{\label{define-mabuchi}}
	d_{\varphi}\mathcal{M}^{\mathbb{T}}(\Dot{\varphi}) = - \int_M \Dot{\varphi} \big(Scal(\tilde{\omega}_{\varphi}) -  \Pi_{\tilde{\omega}_{\varphi}}\big(Scal(\tilde{\omega}_{\varphi})\big) \tilde{\omega}_{\varphi}^{[n]}, \text{ } \text{ }
		\mathcal{M}^{\mathbb{T}}(0)=0.
\end{equation}

\noindent When restricted to $\mathcal{K}(V, \omega_0)^{\mathbb{T}} \subset \mathcal{K}(M, \tilde \omega_0)^{\mathbb{T}}$, this functional is a special case of the weighted Mabuchi functional introduced in \cite{AL}. In our case, the following lemma, established in \cite{VA3}, follows directly from (\ref{weighted-scalarcurv}) and (\ref{weights}).

\begin{lemma}{\label{mabuchi-energy-restriction}}
 The restriction of the Mabuchi relative energy $\mathcal{M}^{\mathbb{T}}$ to $\mathcal{K}(V, \omega_0)^{\mathbb{T}}$ is equal (up to a  positive multiplicative constant) to the weighted Mabuchi energy, defined by

\begin{equation}{\label{definition-weighted-eneergy}}
	d_{\varphi}\mathcal{M}_{\mathrm{v},\mathrm{w} }(\Dot{\varphi}) := - \int_V\big(Scal_{\mathrm{v}}(\omega_{\varphi}) - \w(m_{\varphi})\big)\Dot{\varphi}\omega_{\varphi}^{[\ell]}, \text{ } \text{ } \mathcal{M}_{\mathrm{v},\mathrm{w} }(0)=0,
\end{equation}

\noindent with weights $(\v,\w)$ given by $(\ref{weights})$,  and where $\varphi \in \mathcal{K}(V, \omega_0)^{\mathbb{T}}$ and $\dot{\varphi} \in T_{\varphi}\mathcal{K}(V, \omega_0)^{\mathbb{T}}$. In particular,   compatible extremal K\"ahler metrics in $[\tilde{\omega}_0]$ are critical points of $\mathcal{M}_{\mathrm{v},\mathrm{w}} : \mathcal{K}(V,\omega)^{\mathbb{T}} \longrightarrow \R$.
\end{lemma}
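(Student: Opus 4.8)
The plan is to compute the first variation of $\mathcal{M}^{\mathbb{T}}$ along a tangent vector $\dot\varphi$ lying in the totally geodesic submanifold $\mathcal{K}(V,\omega_0)^{\mathbb{T}} \subset \mathcal{K}(M,\tilde\omega_0)^{\mathbb{T}}$, reduce the resulting integral over $M$ to one over the fiber $V$ by means of the factorization (\ref{volume-form}) of the volume form, and then recognize the fiber integrand as the weighted density appearing in (\ref{definition-weighted-eneergy}). Throughout I view $\dot\varphi \in T_\varphi \mathcal{K}(V,\omega_0)^{\mathbb{T}}$ as a $\mathbb{T}$-invariant function on $M$ via (\ref{identification-functio-cartesian}); by (\ref{function-from-fiber}) it is constant along the base, i.e. $d_S(\dot\varphi)_x = 0$, and likewise the toric data $m_\varphi$, $\mathrm{v}(m_\varphi)$ and $\omega_\varphi^{[\ell]}$ are purely vertical objects.

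First I would record that, by Lemma \ref{changeofmetric}, the compatible metric attached to $\varphi$ is $\tilde\omega_\varphi = \tilde\omega_0 + dd^c\varphi$, so that the restriction of $\mathcal{M}^{\mathbb{T}}$ to compatible potentials is genuinely the functional whose variation is $-\int_M \dot\varphi\,\big(Scal(\tilde\omega_\varphi) - \Pi_{\tilde\omega_\varphi}(Scal(\tilde\omega_\varphi))\big)\,\tilde\omega_\varphi^{[n]}$. By Futaki--Mabuchi and (\ref{affin-extremal}) the projection is the affine function $\ell_{\mathrm{ext}}(m_{\tilde\omega_\varphi})$; crucially, Proposition \ref{extremal-vector-fields} gives $\xi_{\mathrm{ext}} \in \mathfrak{t}$, so this affine function only feels the fiberwise component of the moment map and equals $\ell_{\mathrm{ext}}(m_\varphi)$ with $\ell_{\mathrm{ext}} \in \mathrm{Aff}(P)$. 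Substituting (\ref{volume-form}), the integrand becomes $\dot\varphi\,\big(Scal(\tilde\omega_\varphi) - \ell_{\mathrm{ext}}(m_\varphi)\big)\,\mathrm{v}(m_\varphi)$ paired with $\big(\bigwedge_a \omega_a^{[d_a]}\big)\wedge \omega_\varphi^{[\ell]}$. Since every factor except $\bigwedge_a \omega_a^{[d_a]}$ is vertical, Fubini for the bundle $M \to S$ lets me integrate over the base first, producing the positive constant $\mathrm{Vol}(S) = \int_S \bigwedge_{a=1}^k \omega_a^{[d_a]}$ and leaving an integral over $V$.

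It then remains to identify the fiber density. Using the definition (\ref{weights}) one has $\mathrm{v}(m_\varphi)\ell_{\mathrm{ext}}(m_\varphi) - \mathrm{v}(m_\varphi)\sum_{a}\frac{Scal_a}{\langle p_a, m_\varphi\rangle + c_a} = \mathrm{w}(m_\varphi)$, and combining this with the scalar-curvature identity (\ref{weighted-scalarcurv}) gives
\begin{equation*}
\mathrm{v}(m_\varphi)\big(Scal(\tilde\omega_\varphi) - \ell_{\mathrm{ext}}(m_\varphi)\big) = Scal_{\mathrm{v}}(\omega_\varphi) - \mathrm{w}(m_\varphi).
\end{equation*}
Hence $d_\varphi \mathcal{M}^{\mathbb{T}}(\dot\varphi) = \mathrm{Vol}(S)\,d_\varphi \mathcal{M}_{\mathrm{v},\mathrm{w}}(\dot\varphi)$ for all $\dot\varphi$. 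Since both functionals are normalized to vanish at the reference potential and $\mathcal{K}(V,\omega_0)^{\mathbb{T}}$ is smoothly geodesically connected (as noted after Lemma \ref{restriction-distance}), integrating this pointwise identity of differentials along a geodesic yields $\mathcal{M}^{\mathbb{T}}|_{\mathcal{K}(V,\omega_0)^{\mathbb{T}}} = \mathrm{Vol}(S)\,\mathcal{M}_{\mathrm{v},\mathrm{w}}$, which is the claim; in particular the critical points, namely the compatible extremal metrics, are exactly the $(\mathrm{v},\mathrm{w})$-cscK metrics.

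I expect the only real point requiring care to be the reduction of the $M$-integral to the $V$-integral: one must be sure that, under the identifications (\ref{identification-functio-cartesian})--(\ref{function-from-fiber}) and (\ref{identification-M0}), the quantities $Scal(\tilde\omega_\varphi)$, $\ell_{\mathrm{ext}}(m_\varphi)$ and $\mathrm{v}(m_\varphi)$ really are pullbacks of functions on $V$ (equivalently, functions of the toric moment map alone), so that the base integration decouples and contributes the $\varphi$-independent constant $\mathrm{Vol}(S)$. The scalar-curvature formula (\ref{weighted-scalarcurv}) already encodes exactly this vertical dependence, and the identification of the $\mathbb{T}_M$-moment map with the toric moment map on each fiber is built into the compatible construction; once these are in hand, the remaining algebra is the one-line substitution displayed above.
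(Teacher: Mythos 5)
Your proposal is correct and takes essentially the same route as the paper, whose proof consists precisely in observing that the claim "follows directly from (\ref{weighted-scalarcurv}) and (\ref{weights})" (citing \cite{VA3}): your fiberwise reduction via the volume-form factorization (\ref{volume-form}), together with the identification $\Pi_{\tilde{\omega}_{\varphi}}\big(Scal(\tilde{\omega}_{\varphi})\big)=\ell_{\textnormal{ext}}(m_{\varphi})$ furnished by Proposition \ref{extremal-vector-fields} and Futaki--Mabuchi, is exactly that one-line argument written out in full. Note only that your key identity $\mathrm{v}(m_{\varphi})\big(Scal(\tilde{\omega}_{\varphi})-\ell_{\textnormal{ext}}(m_{\varphi})\big)=Scal_{\mathrm{v}}(\omega_{\varphi})-\mathrm{w}(m_{\varphi})$ requires a $+$ sign in front of $\frac{1}{\mathrm{v}(m_{\omega})}Scal_{\mathrm{v}}(\omega)$ in (\ref{weighted-scalarcurv}); the minus sign printed there is a typo of the paper (as one checks against Corollary \ref{equivalence-scalar}, the definition (\ref{weights}), or the degenerate case $k=0$), so your computation is the one consistent with the rest of the text.
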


 The Aubin-Mabuchi functional $\mathcal{I} : \mathcal{K}(M,\tilde{\omega}_0)^{\mathbb{T}}  \longrightarrow \R$ is defined by

\begin{equation*}\label{I-functionnal}
d_{\varphi}\mathcal{I}(\Dot{\varphi}) = \int_M\Dot{\varphi}\tilde{\omega}_{\varphi}^{[n]}, \text{ } \text{ }  \mathcal{I}(0)=0, 
\end{equation*}

\noindent for any $\dot{\varphi} \in T_{\varphi}\mathcal{K}(M,\tilde{\omega}_0)^{\mathbb{T}}$. By (\ref{volume-form}), its restriction to $\mathcal{K}(V,\omega_0)^{\mathbb{T}} \subset \mathcal{K}(M,\tilde{\omega}_0)^{\mathbb{T}}$ is equal to (up to a  positive multiplicative constant)

\begin{equation}{\label{Ir-functionnal}}
d_{\varphi}\mathcal{I}_{\v}(\Dot{\varphi}) := \int_V\Dot{\varphi}\v(m_{\varphi})\omega_{\varphi}^{[\ell]}, \text{ } \mathcal{I}_{\v}(0)=0
\end{equation}

\noindent for any $\dot{\varphi} \in T_{\varphi}\mathcal{K}(V, \tilde{\omega}_0)^{\mathbb{T}}$. We define the space of $\mathcal{I}$-normalized relative K\"ahler potentials as

\begin{equation}{\label{normalized-potential}}
    \mathring{\mathcal{K}}(M,\tilde{\omega}_0)^{\mathbb{T}}:=\mathcal{I}^{-1}(0) \subset \mathcal{K}(M,\tilde{\omega}_0)^{\mathbb{T}}.
\end{equation}

\noindent It is well known, see e.g. \cite[chapter 4]{PG}, that this space is totally geodesic in $\mathcal{K}(M,\tilde{\omega}_0)^{\mathbb{T}}$. Similarly, we define

\begin{equation}{\label{normalized-compatible-potential}}
  \mathring{\mathcal{K}}_{\v}(V,\omega_0)^{\mathbb{T}}:=\mathcal{I}_{\v}^{-1}(0) \subset \mathcal{K}_{\v}(V,\omega_0)^{\mathbb{T}}.  
\end{equation}

\noindent It follows from $(\ref{Ir-functionnal})$ that we also have $\mathring{\mathcal{K}}_{\v}(V,\omega_0)^{\mathbb{T}} \subset \mathring{\mathcal{K}}(M,\tilde{\omega}_0)^{\mathbb{T}} $.

\subsection{Weighted differential operators}{\label{section-operator}}

Following \cite{VA3}, we introduce the $\v$-Laplacian of $(V,J_V,\omega)$ acting on smooth function

\begin{equation*}
    \Delta^V_{\omega,\v}f:=\frac{1}{\v(m_{\omega})}\delta \big(\v(m_{\omega})d_Vf\big),
\end{equation*}

\noindent where $\delta$ is the formal adjoint of the differential $d_V$ with respect to $\omega^{[\ell]}$. This definition immediately implies that $\Delta^V_{\omega,\v}$ is self-adjoint with respect to $\v(m_{\omega})\omega^{[\ell]}$. Moreover, it follows from the computations in \cite[Lemma~8]{VA3} that $\Delta^V_{\omega,\v}$ can be alternatively expressed as

\begin{equation}{\label{expression-v-laplacien}}
    \Delta^V_{\omega,\v}f= \Delta_{\omega}f - \sum_{a=1}^k \frac{d_ad_V^cf(p^V_a)}{\langle p_a,m_{\omega} \rangle + c_a}
\end{equation}

\noindent \noindent for any $f \in \mathcal{C}^{\infty}(V)$, where $\Delta_{\omega}$ is the Laplacian with respect to $\omega$ and $p_a^V$ is the fundamental vector field on $V$ defined by $p_a \in \mathfrak{t}$. As in \cite{AL}, we introduce the $\v$-weighted Lichnerowicz operator of $(V,J_V,\omega)$ defined on the smooth functions $f \in \mathcal{C}^{\infty}(V)$ to be

\begin{equation}{\label{define-lichne-pondéré}}
    \mathbb{L}^V_{\omega,\v}f:=\frac{\delta \delta\big(\v(m_{\omega})(D^-d_Vf)\big)}{\v(m_{\omega})},
\end{equation}

\noindent where $D$ is the Levi-Civita connection of $\omega$, $D^-d_V$ denotes the $(2,0)+(0,2)$ part of $Dd_V$ and $\delta : \otimes^pT^*V \longrightarrow \otimes^{p-1}T^*V$ is defined in any local orthogonal frame $\{e_1,\dots, e_{2n} \}$  by

\begin{equation*}
    \delta \psi  := - \sum_{i=1}^{2n} e_i \lrcorner D_{e_i} \psi
\end{equation*}

\noindent where $\lrcorner$ denotes the interior product. The operator $\delta \delta$ is the formal adjoint of $D^-d_V$ with respect to $\omega^{[\ell]}$.  Hence, the $\v$-weighted  Lichnerowicz operator is  self-adjoint with respect to the volume form $\v(m_{\omega})\omega^{[\ell]}$. Let $\tilde{\omega}$ be the compatible K\"ahler metric on $(M,J)$ corresponding to $\omega$. We denote by $\omega_S(x)$ the K\"ahler form on $(S,J_S)$ induced by $\tilde{\omega}$:

\begin{equation*}
    \omega_S(x):= \sum_{a =1}^k\big(\langle p_a, m_{\omega}(x) \rangle +c_a \big) \omega_a.
\end{equation*}

\noindent  The following is etablished in the proof of \cite[Lemma 8]{VA3}.

\begin{prop}{\label{decomposition-Lichne-prop}}
Let $f$  be a $\mathbb{T}$-invariant smooth function on $M$, seen as a $ \mathbb{T}_V$-invariant function on $V\times S$ via $(\ref{identification-functio-cartesian})$. We denote by $\Delta_{\tilde{\omega}}$ the Laplacian of $(M,J,\tilde{\omega})$ and by $\Delta_x^S$, respectively $\mathbb{L}^S_x$, the Laplacian, respectively the Lichnerowicz operator, of $(S,J_S,\omega_S(x))$. We then have

\begin{equation*}
    \Delta_{\tilde{\omega}}f=\Delta^V_{\omega,\v}f_s + \Delta_x^Sf_x
\end{equation*}

\noindent Furthermore, the corresponding Licherowicz's operators  $\mathbb{L}_{\tilde{\omega}}$, $\mathbb{L}^V_{\omega,\v}$ and $\mathbb{L}_x^S$ are related by

\begin{equation}{\label{decomposition-Lichne}}
\mathbb{L}_{\tilde{\omega}}f=\mathbb{L}^V_{\omega,\v}f_s+\mathbb{L}^S_xf_x+\Delta^S_x\big(\Delta^V_{\omega,\v}f_s\big)_x + \Delta^V_{\omega}\big(\Delta_x^S f_x\big)_s  + \sum_{a=1}^kQ_a(x)\Delta_{a}f_x
\end{equation}

\noindent where $\Delta _a$ is the Laplacian with respect to $(S_a,J_a,\omega_a)$ and $Q_a(x)$ is a smooth function on $V$.
\end{prop}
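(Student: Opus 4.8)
The plan is to work on the open dense set $M^0 \cong Q \times P^0$, where the compatible metric $(\ref{metriccalabidata})$ makes $TM^0 = \mathcal{H} \oplus \mathcal{V}$ an orthogonal splitting into the horizontal distribution $\mathcal{H} = \mathrm{ann}(\theta)$ and the vertical space $\mathcal{V} \cong \mathfrak{t} \oplus \mathfrak{t}^*$. In the adapted frame built from horizontal lifts (via $\theta$) of a local frame of $TS$ together with the momentum and angle fields $\partial_{m_i}, \partial_{t_i}$ of the toric fibre, the Riemannian metric $\tilde{g} = \tilde{\omega}(\cdot, J\cdot)$ is block-diagonal: the base block is $g_S(x) = \sum_a \lambda_a g_a$ with $\lambda_a := \langle p_a, m_{\omega}\rangle + c_a$, the momentum block is the fibre Hessian $G_{ij}$ and the angle block is $G^{ij}$, the three being mutually orthogonal. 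Since $f$ is $\mathbb{T}$-invariant we have $\partial_{t_i} f = 0$, so only the base and momentum blocks see $f$; this is the mechanism by which the angle directions, and (for the second-order operator) the non-integrability of $\mathcal{H}$, drop out.

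First I would treat the Laplacian variationally. For $\mathbb{T}$-invariant test functions $h$ the pointwise inner product factors as $\langle df, dh\rangle_{\tilde{g}} = g_S(x)(d_S f, d_S h) + g_V(d_V f, d_V h)$, with no cross term thanks to the orthogonal splitting and $\mathbb{T}$-invariance. Integrating against $\tilde{\omega}^{[n]}$ and using the volume factorization $(\ref{volume-form})$ together with the fact that the Riemannian volume of $g_S(x)$ is $\v(m_{\omega})$ times that of $\bigwedge_a \omega_a^{[d_a]}$, Fubini separates the two pieces: integration by parts in the $S$-variables at a fixed fibre point produces $\Delta_x^S f_x$, while integration by parts in the fibre against $\v(m_{\omega})\omega^{[\ell]}$ produces $\Delta^V_{\omega,\v} f_s$ by the self-adjointness built into its definition. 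Testing against all such $h$ yields $\Delta_{\tilde{\omega}} f = \Delta^V_{\omega,\v} f_s + \Delta_x^S f_x$, the weight $\v$ being forced precisely by $\det g_S(x)$.

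For the Lichnerowicz operator I would start from the standard Kähler identity, which for a suitable normalization reads $\mathbb{L}_{\tilde{\omega}} f = \tfrac12 \Delta_{\tilde{\omega}}^2 f + (\rho_{\tilde{\omega}}, dd^c f)_{\tilde{\omega}} + \tfrac12\langle d\,Scal(\tilde{\omega}), df\rangle_{\tilde{\omega}}$, and substitute the Laplacian decomposition just obtained, the Ricci form of a compatible metric, and the scalar curvature $(\ref{weighted-scalarcurv})$. Expanding $\Delta_{\tilde{\omega}}^2 = (\Delta^V_{\omega,\v})^2 + (\Delta_x^S)^2 + \Delta^V_{\omega,\v}\Delta_x^S + \Delta_x^S\Delta^V_{\omega,\v}$ and grouping, the pure-fibre contributions (together with the fibre Ricci-coupling and $\v$-weighted scalar term) assemble into $\mathbb{L}^V_{\omega,\v} f_s$, and the pure-base contributions into $\mathbb{L}^S_x f_x$; here one uses that each $\lambda_a$ is constant along $S$, so $Scal(g_S(x)) = \sum_a \lambda_a^{-1} Scal_a$ is $S$-constant and $\rho_{g_S(x)} = \sum_a \rho_a$. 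The mixed products give the cross terms $\Delta_x^S(\Delta^V_{\omega,\v} f_s)_x + \Delta^V_{\omega}(\Delta_x^S f_x)_s$ once the weight $\v$ is moved to the correct side, and the crucial observation is that $\Delta_x^S = \sum_a \lambda_a^{-1}\Delta_a$ has coefficients depending on the fibre through $m_{\omega}$, so it does not commute with $\Delta^V_{\omega,\v}$; the terms in which the fibre derivatives fall on $\lambda_a^{-1}$ (and the analogous Ricci-coupling between base and fibre) are second order in $S$ but zeroth order in $m$, hence of the form $\sum_a Q_a(x)\Delta_a f_x$, which is exactly the last term of $(\ref{decomposition-Lichne})$.

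The main obstacle is the bookkeeping of this last step: one must check that the $(2,0)+(0,2)$ projection $D^- d$ of the total Hessian and its adjoint $\delta\delta$ interact with the splitting so that the O'Neill-type contributions governed by $d\theta = \sum_a \pi_S^*\omega_a \otimes p_a$, together with the weight-transfer discrepancy between the two cross orderings and the Ricci-coupling, regroup cleanly into the single correction $\sum_a Q_a(x)\Delta_a f_x$, with $Q_a(x)$ computed explicitly and with no residual coupled fourth-order or stray first-order terms surviving. It is precisely here that the rigidity of the construction is used: because $S$ is a product of cscK factors and the dependence of $g_S(x)$ on $x$ is diagonal (Remark \ref{remark-semi-simple}), the only base second-order operators that can appear are the individual $\Delta_a$, which is what allows the correction to take the stated diagonal form.
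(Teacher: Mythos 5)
Your variational proof of the first identity (the Laplacian decomposition) is correct and genuinely different from the paper's route: the paper does not prove Proposition \ref{decomposition-Lichne-prop} directly but imports it from the computation in the proof of \cite[Lemma 8]{VA3}, whose starting point is the explicit decomposition of $dd^cf$ reproduced as (\ref{decomposition-ddc}) in the proof of Proposition \ref{decomposition-Hashimoto}; you instead obtain $\Delta_{\tilde{\omega}}f=\Delta^V_{\omega,\v}f_s+\Delta^S_xf_x$ by testing the Dirichlet form against $\mathbb{T}$-invariant functions, using the orthogonality of $\mathcal{H}$ and $\mathcal{V}$, the factorization (\ref{volume-form}), and the self-adjointness of $\Delta^V_{\omega,\v}$ with respect to $\v(m_{\omega})\omega^{[\ell]}$. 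Since $\Delta_{\tilde{\omega}}f$ is itself $\mathbb{T}$-invariant, that argument is complete for the first identity, and it is a clean alternative to the direct computation.

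The Lichnerowicz identity (\ref{decomposition-Lichne}), however, is where the content of the proposition lies, and there your proposal has a genuine gap rather than a proof. Three ingredients are asserted but never established. First, the grouping of the pure-fibre terms into $\mathbb{L}^V_{\omega,\v}f_s$ presupposes a weighted analogue of the identity $\delta\delta D^-d=\tfrac{1}{2}\Delta^2+g(dd^c\cdot,\rho)+\tfrac{1}{2}g(d\,Scal,d\cdot)$ for the operator $\mathbb{L}^V_{\omega,\v}=\tfrac{1}{\v}\,\delta\delta\big(\v\, D^-d_V\cdot\big)$, expressing it through $(\Delta^V_{\omega,\v})^2$, a weighted Ricci contraction and $d\,Scal_{\v}$; you neither state nor prove such an identity, so "the pure-fibre contributions assemble into $\mathbb{L}^V_{\omega,\v}f_s$" is an assertion, not a deduction. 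Second, the term $g_{\tilde{\omega}}(dd^cf,\rho_{\tilde{\omega}})$ requires both the Ricci form of the compatible metric — which is not $\sum_a\rho_a$ plus the fibre Ricci form, but carries corrections involving $dd^c\log\v(m_{\omega})$ and the curvature $d\theta$ — and the splitting of $dd^cf$ into vertical, horizontal and mixed parts, i.e. precisely formula (\ref{decomposition-ddc}); so your route does not bypass the computation of \cite[Lemma 8]{VA3}, it silently assumes it. Third, and most seriously, the regrouping you yourself defer as "the main obstacle" is exactly the statement to be proved: expanding $\tfrac{1}{2}\Delta^2_{\tilde{\omega}}$ gives each mixed ordering with coefficient $\tfrac{1}{2}$, whereas (\ref{decomposition-Lichne}) lists both orderings with coefficient $1$, so half of each cross term must be produced by the Ricci contraction; moreover the weight-transfer term $(\Delta^V_{\omega,\v}-\Delta^V_{\omega})\big(\Delta^S_xf_x\big)_s$ produces, besides admissible terms $Q_b(x)\Delta_bf_x$, terms proportional to $d_V^c\big((\Delta_bf_x)_s\big)(p_a^V)$ — first-order fibre derivatives of $\Delta_bf_x$ — which are not of the form $Q_a(x)\Delta_af_x$ and must cancel against other contributions. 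Your proposal offers no mechanism for these cancellations, so it amounts to a plausible strategy whose decisive verification is missing.
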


\noindent We fix a compatible K\"ahler metric 

\begin{equation*}
\tilde{\chi}=\sum_{a=1}^k (\langle p_a, m_{\chi} \rangle  + c_{a,\alpha})\omega_a + \chi
\end{equation*}

\noindent corresponding to a K\"ahler metric $\chi$ on $(V,J_V)$, where $m_{\chi}$ is a moment map with respect to $\chi$ and $c_{a, \alpha}$ are  constants depending on $\alpha:=[\tilde{\chi}]$ such that $\langle p_a, m_{\chi} \rangle  + c_{a,\alpha}>0$.
Hashimoto introduced in \cite{YH} the operator $\mathbb{H}_{\tilde{\omega}}^{\tilde{\chi}} : \mathcal{C}^{\infty}(M)^{\mathbb{T}} \longrightarrow \mathcal{C}^{\infty}(M)^{\mathbb{T}}$ defined by

\begin{equation}{\label{Hashimoto-operator-definition}}
  \mathbb{H}^{\tilde{\chi}}_{\tilde{\omega}}f:=  g_{\tilde{\omega}}\big(\tilde{\chi}, dd^c f\big) + g_{\tilde{\omega}}\big(d \Lambda_{\tilde{\omega}} \tilde{ \chi}, df\big).
\end{equation}

\noindent According to \cite[Lemma~1]{YH},  $\mathbb{H}^{\tilde{\chi}}_{\tilde{\omega}}$ is a second order elliptic self-adjoint  differential operator with respect to $\tilde{\omega}^{[n]}$. Furthermore, the kernel of $\mathbb{H}^{\tilde{\chi}}_{\tilde{\omega}}$ is the space of constant functions. We define the  $\v$-weighted \textit{Hashimoto operator} $\mathbb{H}^{\chi}_{\omega,\v} : \mathcal{C}^{\infty}(V)^{\mathbb{T}} \rightarrow \mathcal{C}^{\infty}(V)^{\mathbb{T}}$ by

\begin{equation*}
  \mathbb{H}^{\chi}_{\omega,\v}f:= g_{\omega}\big(\chi, d_Vd_V^c f \big) + g_{\omega}\big(d_V \Lambda_{\omega} \chi, d_Vf\big)  + \frac{1}{\v(m_{\omega})}g_{\omega}\big(\chi,d_V\v(m_{\omega}) \wedge d_V^cf\big).
\end{equation*}

\begin{prop}{\label{decomposition-Hashimoto}}

Let $f$  be a $\mathbb{T}$-invariant smooth function on $M$, seen as a $ \mathbb{T}_V$-invariant function on $V\times S$ via $(\ref{identification-functio-cartesian})$. The Hashimoto operator admits the following decomposition 

\begin{equation*}
    \mathbb{H}^{\tilde{\chi}}_{\tilde{\omega}}f= \mathbb{H}_{\omega,\v}^{\chi}f_s +  \sum_{a=1}^k R_a(x)\Delta_a f_x,
\end{equation*}

\noindent where $R_a(x)$ is a smooth function on $V$ depending on $\chi$ and $\alpha$.
\end{prop}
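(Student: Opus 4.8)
The plan is to follow the method used for Proposition~\ref{decomposition-Lichne-prop} in \cite[Lemma 8]{VA3}, exploiting that a compatible K\"ahler form is block-diagonal with respect to the splitting $TM^0=\mathcal{H}\oplus\mathcal{V}$ of $(\ref{splitting})$. Indeed, both $\tilde{\omega}$ and $\tilde{\chi}$ decompose into a horizontal piece, $\sum_{a}(\langle p_a,m_{\omega}\rangle+c_a)\omega_a$ respectively $\sum_a(\langle p_a,m_{\chi}\rangle+c_{a,\alpha})\omega_a$, and a vertical piece, $\omega=\langle dm_{\omega}\wedge\theta\rangle$ respectively $\chi$. Since $J=J_S\oplus J_V$ preserves the splitting and $d=d_S+d_V$ on $\mathbb{T}$-invariant functions, the Riemannian metric $g_{\tilde{\omega}}$ and its inverse are block-diagonal as well; hence every contraction decouples into a horizontal and a vertical part, the cross-blocks being mediated solely by the curvature $d\theta=\sum_a\pi_S^*\omega_a\otimes p_a$ of $(\ref{first-chern})$. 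I would first reduce $\mathbb{H}^{\tilde{\chi}}_{\tilde{\omega}}f$ to its two building blocks $g_{\tilde{\omega}}(\tilde{\chi},dd^c f)$ and $g_{\tilde{\omega}}(d\Lambda_{\tilde{\omega}}\tilde{\chi},df)$ and treat each separately.

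For the gradient term, the block structure together with the product structure of $S$ gives
\begin{equation*}
\Lambda_{\tilde{\omega}}\tilde{\chi}=\sum_{a=1}^k\frac{d_a(\langle p_a,m_{\chi}\rangle+c_{a,\alpha})}{\langle p_a,m_{\omega}\rangle+c_a}+\Lambda_{\omega}\chi,
\end{equation*}
which is a function of the fiber variable alone, so that $d\Lambda_{\tilde{\omega}}\tilde{\chi}=d_V\Lambda_{\tilde{\omega}}\tilde{\chi}$ is vertical and, by block-diagonality, pairs only with $d_Vf$. The contribution of $\Lambda_{\omega}\chi$ reproduces the second summand $g_{\omega}(d_V\Lambda_{\omega}\chi,d_Vf)$ of $\mathbb{H}^{\chi}_{\omega,\v}$, while the rational piece yields an extra term linear in the fiber gradient of $f$, to be combined below with the weight correction.

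For the second-order term I would compute $dd^cf$ on $M^0\cong Q\times P^0$. Because $\tilde{\chi}$ has no cross-block and $g_{\tilde{\omega}}$ is block-diagonal, only the vertical-vertical and horizontal-horizontal blocks of $dd^cf$ survive the contraction. The vertical-vertical block is the intrinsic fiber Hessian $d_Vd_V^cf_s$, and its contraction with $\chi$ gives the first summand $g_{\omega}(\chi,d_Vd_V^cf)$ of $\mathbb{H}^{\chi}_{\omega,\v}$. The horizontal-horizontal block splits in turn into a genuine base-Hessian part, whose contraction with $\sum_a(\langle p_a,m_{\chi}\rangle+c_{a,\alpha})\omega_a$ produces, factor by factor on $S=\prod_a S_a$, the terms $\sum_a R_a(x)\Delta_a f_x$ with $R_a$ a smooth fiber function depending on $\chi$ and $\alpha$; and a curvature-induced part, coming from the fact that $d_V^cf$ carries a $\theta$-component and that $d\theta$ is the horizontal $2$-form $\sum_a\pi_S^*\omega_a\otimes p_a$, which again produces a term linear in the fiber gradient of $f$.

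The decisive step, and the main obstacle, is to show that the two leftover gradient terms --- the rational piece of $\Lambda_{\tilde{\omega}}\tilde{\chi}$ and the curvature-induced piece of the horizontal-horizontal contraction --- reassemble exactly into the weight correction $\frac{1}{\v(m_{\omega})}g_{\omega}(\chi,d_V\v(m_{\omega})\wedge d_V^cf)$ of $\mathbb{H}^{\chi}_{\omega,\v}$, everything else being either a base-Laplacian term $R_a(x)\Delta_a f_x$ or a cancellation. The bridge is the logarithmic-derivative identity
\begin{equation*}
\frac{d_V\v(m_{\omega})}{\v(m_{\omega})}=\sum_{a=1}^k\frac{d_a\,d_V\langle p_a,m_{\omega}\rangle}{\langle p_a,m_{\omega}\rangle+c_a},
\end{equation*}
obtained by differentiating $\v(m_{\omega})=\prod_a(\langle p_a,m_{\omega}\rangle+c_a)^{d_a}$, which is precisely what matches the $p_a$-weighted fiber derivatives $d_V^cf(p_a^V)$ against the weight. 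As a consistency check, when $\chi=\omega$ one has $\mathbb{H}^{\tilde{\omega}}_{\tilde{\omega}}=\Delta_{\tilde{\omega}}$ and the identity specializes to the Laplacian decomposition $\Delta_{\tilde{\omega}}f=\Delta^V_{\omega,\v}f_s+\Delta^S_xf_x$ of Proposition~\ref{decomposition-Lichne-prop}, with $\mathbb{H}^{\omega}_{\omega,\v}=\Delta^V_{\omega,\v}$ via $(\ref{expression-v-laplacien})$; the whole argument is therefore the bookkeeping of these horizontal-vertical couplings, for which the computation in \cite[Lemma 8]{VA3} serves as the template.
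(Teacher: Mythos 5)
Your proposal is correct and follows essentially the same route as the paper's proof: the paper likewise splits $\mathbb{H}^{\tilde{\chi}}_{\tilde{\omega}}$ into the contraction term $g_{\tilde{\omega}}(\tilde{\chi},dd^cf)$ and the gradient term $g_{\tilde{\omega}}(d\Lambda_{\tilde{\omega}}\tilde{\chi},df)$, uses the decomposition of $dd^cf$ from \cite[Lemma 8]{VA3} to kill the cross-blocks and isolate the curvature terms $\sum_a d_V^cf_s(p_a^V)\omega_a$, computes $\Lambda_{\tilde{\omega}}\tilde{\chi}=\Lambda_{\omega}\chi+\sum_a d_a(\langle p_a,m_{\chi}\rangle+c_{a,\alpha})/(\langle p_a,m_{\omega}\rangle+c_a)$, and then reassembles the leftover first-order terms into $\frac{1}{\v(m_{\omega})}g_{\omega}(\chi,d_V\v(m_{\omega})\wedge d_V^cf)$ exactly via the logarithmic derivative of $\v$ that you identify as the bridge. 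The only difference is organizational (you contract block-by-block pointwise, the paper works with wedge-product identities), which does not change the argument.
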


\begin{proof}

\noindent For simplicity, we denote by $m$ the moment map of $\omega$ and

\begin{equation*}
    q(m):=\sum_{a=1}^k \frac{d_a \big( \langle p_a,m_{\chi} \rangle +c_{a,\alpha}\big) }{\langle p_a,m \rangle +c_{a}}.
\end{equation*}

Let $K \in \mathcal{C}^{\infty}(V)^{\mathbb{T}} \otimes \mathfrak{t}^*$ the generator of the $\mathbb{T}_V$-action.  By definition $d_V^cf(K)$ is a smooth $\mathbb{T}_V$-invariant $\mathfrak{t}^*$-valued function on $V$ and induces a smooth $\mathbb{T}_M$-invariant $\mathfrak{t}^*$-valued function on $M$ via (\ref{identification-functio-cartesian}). It is shown in the proof of \cite[Lemma 8]{VA3} that on $M^0$

\begin{equation}{\label{decomposition-ddc}}
\begin{split}
        dd^cf=& \langle d_V(d^c_Vf_s(K))_s \wedge \theta \rangle + \langle d_S(d^c_Vf_s(K))_x \wedge \theta \rangle \\
        &+ \sum_{a=1}^k  d^c_Vf_s(p_a^V)  \omega_a + d_Sd^c_Sf_x + \langle d^c_S(d^c_Vf_s(K))_x,J\theta \rangle.
\end{split}
\end{equation}

\noindent First, we recall the general identity

\begin{equation}{\label{1er-term-produit}}
    g_{\tilde{\omega}}(dd^cf,\tilde{\chi})\tilde{\omega}^{[n]}=-dd^cf \wedge \tilde{\chi} \wedge \tilde{\omega}^{[n-2]} - \Delta_{\tilde{\omega}}f \Lambda_{\tilde{\omega}}(\tilde{\chi}) \tilde{\omega}^{[n]}.
\end{equation}

\noindent  From the expression of $\tilde{\chi}$ and $\tilde{\omega}$, we can see that 

\begin{equation*}
  \bigg( \langle d_S(d^c_Vf_s(K))_x \wedge \theta \rangle +  \langle d^c_S(d^c_Vf_s(K))_x,J\theta \rangle\bigg) \wedge \tilde{\chi} \wedge \tilde{\omega}^{[n-2]}=0.
\end{equation*}

\noindent A straightforward computation gives
 
  \begin{equation}{\label{trace-calcul-equation-nvx}}
     \Lambda_{\tilde{\omega}}(\tilde{\chi})=\Lambda_{\omega}(\chi) + \sum_{a=1}^k \frac{d_a \big( \langle p_a,m_{\chi} \rangle +c_{a,\alpha}\big) }{\langle p_a,m \rangle +c_{a}}.
 \end{equation}

\noindent From Proposition \ref{decomposition-Lichne-prop}, (\ref{trace-calcul-equation-nvx}) and (\ref{1er-term-produit}) we have

\begin{equation}
    g_{\tilde{\omega}}(dd^cf_s,\tilde{\chi}) = g_{\omega}(d_Vd^c_Vf_s,\chi) + \sum_{a=1}^k\frac{d_a d_Vf_s(p_a^V) (\langle p_a,m_{\chi}\rangle +c_{a,\alpha})}{(\langle p_a,m\rangle +c_{a})^2}.
\end{equation}

\noindent  Using (\ref{trace-calcul-equation-nvx}) we get

\begin{equation}{\label{enfin-hasimoto}}
    g_{\tilde{\omega}}\big(d\Lambda_{\tilde{\omega}}(\tilde{\chi}),df_s\big)=   g_{\omega}\big(d_V\Lambda_{\omega}(\chi),d_Vf_s\big)+  g_{\omega}\big(d_Vq(m), d_V^cf_s \big).
\end{equation}

\noindent To summarize, we have shown

\begin{equation}{\label{Hashimoto-1}}
\begin{split}
      \mathbb{H}^{\tilde{\chi}}_{\tilde{\omega}}f_s=& g_{\omega}(d_Vd^c_Vf_s,\chi)+ g_{\omega}\big(d_V\Lambda_{\omega}(\chi),d_Vf_s\big)\\
      &+  g_{\omega}\big(d_Vq(m) , d_V^cf_s \big)  + \sum_{a =1}^k\frac{d_a d_Vf_s(p_a^V) (\langle p_a,m_{\chi}\rangle +c_{a,\alpha})}{(\langle p_a,m\rangle +c_{a})^2}.
\end{split}      
\end{equation}

\noindent Using (\ref{weights})  we have

\begin{equation}{\label{terme-en-trop}}
   \frac{1}{\v(m)} g_{\omega}\big(\chi,d_V\v(m) \wedge d_V^cf_s\big)=  g_{\omega}\big(d_Vq(m) , d_V^cf_s \big)  + \sum_{a =1}^k\frac{d_a d_Vf_s(p_a^V) (\langle p_a,m_{\chi}\rangle +c_{a,\alpha})}{(\langle p_a,m\rangle +c_{a})^2}. 
\end{equation}

\noindent From $(\ref{Hashimoto-1})$ and $(\ref{terme-en-trop})$ we get 

\begin{equation*}
  \mathbb{H}^{\tilde{\chi}}_{\tilde{\omega}}f_s  = \mathbb{H}^{\chi}_{\omega,\v}f_s.
\end{equation*}

\noindent The term $\mathbb{H}^{\tilde{\chi}}_{\tilde{\omega}}f_x$ is obtained via similar computation.

\end{proof}

\begin{remark}{\label{remark-Hashimoto}}
Proposition \ref{decomposition-Lichne-prop} implies in particular that the restriction of $\mathbb{H}^{\tilde{\chi}}_{\tilde{\omega}}$ to the Frechet subspace $\mathcal{C}^{\infty}(V)^{\mathbb{T}} \subset \mathcal{C}^{\infty}(M)^{\mathbb{T}}$ coincides with $\mathbb{H}_{\omega,\v}^{\chi}$. It follows that $\mathbb{H}_{\omega,\v}^{\chi}$ is a self-adjoint (with respect to $\v(m_{\omega})\omega^{[\ell]}$) second order elliptic operator.
\end{remark}

\section{An analytic criterion for the existence of extremal K\"ahler metrics}{\label{section-chencheng}}

In this section we recall the existence  results of extremal K\"ahler metrics in a given K\"ahler class, proved by Chen--Cheng \cite{XC1, XC2} in the constant scalar curvature case and extended by He \cite{WH} to the extremal case. 

We fix a compact complex manifold $(M,J)$, a maximal compact connected subgroup $K$ of $ \aut_{\red}(M)$ and a $K$-invariant K\"ahler metric $\omega_0$. Let $\xi_{\textnormal{ext}}$ denotes the corresponding extremal vector field, as explained in \ref{section-extremal-vector-fields}. Since the extremal vector field $\xi_{\textnormal{ext}}$ is central in the Lie algebra of $K$, it generates a group $\mathbb{T}_{\textnormal{ext}}$ in the center of the complexified group $G:=K^{\mathbb{C}}$ of
$K$. As in  \cite{WH}, we consider the space of $\mathbb{T}_{\textnormal{ext}}$-invariant K\"ahler potentials $\mathcal{K}(M,\omega_0)^{\mathbb{T}_{\textnormal{ext}}}$. The
groupe $G$ acts on $\mathring{\mathcal{K}}(M,\omega_0)^{\mathbb{T}_{\textnormal{ext}}}$  via the natural action on K\"ahler metrics in $[\omega_0]$ and the normalization  (\ref{normalized-potential}). We
introduce the distance $d_{1,G}$ relative to $G$

\begin{equation}{\label{distance-relative}}
d_{1,G}(\varphi_1,\varphi_2):=\inf_{\gamma \in G}d_1(\varphi_1, \gamma \cdot \varphi_2),
\end{equation}
  
\noindent where $d_1$ is defined in (\ref{distance-d1}). Let $\mathcal{M}^{\mathbb{T}_{\textnormal{ext}}}$ be the Mabuchi energy relative to $\mathbb{T}_{\textnormal{ext}}$, see $(\ref{define-mabuchi})$. We recall the following definition from \cite{DR}:

\begin{define}{\label{def-proper}} The relative Mabuchi energy $\mathcal{M}^{\mathbb{T}_{\textnormal{ext}}}$ is  said proper with respect to $d_{1,G}$ if
\begin{itemize}
    \item $\mathcal{M}^{\mathbb{T}_{\textnormal{ext}}}$ is bounded from below on $\mathcal{K}(M, \omega_0)^{\mathbb{T}_{\textnormal{ext}}}$;
    \item for any sequence $\varphi_i \in \mathring{\mathcal{K}}(M,\omega_0)^{\mathbb{T}_{\textnormal{ext}}}$, $d_{1,G}(0, \varphi_i) \rightarrow \infty$ implies that $\mathcal{M}^{\mathbb{T}_{\textnormal{ext}}}(\varphi_i)\rightarrow \infty$.
\end{itemize}

\end{define}

\begin{theorem}{\label{Chen--Cheng-existence}}
The relative Mabuchi energy $\mathcal{M}^{\mathbb{T}_{\textnormal{ext}}}$ restricted to $\mathcal{K}(M,\omega_0)^K \subset  \mathcal{K}(M,\omega_0)^{{\mathbb{T}_{\textnormal{ext}}}}$ is  $d_{1,G}$-proper if and only if there exists an extremal K\"ahler metric in $(M,J,[\omega_0])$ with extremal vector fields $\xi_{\textnormal{ext}}$.  Moreover, the same assertion holds by replacing $\mathbb{T}_{\textnormal{ext}}$ by a maximal torus $T \subset \aut_{\red}(M)$ in the maximal compact group $K$ and $G=K^{\mathbb{C}}$ by the complexification $T^{\mathbb{C}}$ of $T$.
\end{theorem}

The first assertion is established in \cite[Theorem 3.1]{WH}. We can directly modify the argument to obtain the second. Indeed,  in the one direction, suppose that $\mathcal{M}^T$ is $T^{\mathbb{C}}$-proper, in the sense that $\mathcal{M}^T$ is bounded from below on $\mathcal{K}(M,\omega_0)^T$ and for any sequence $\varphi_i \in \mathring{\mathcal{K}}(M,\omega_0)^{T}$, $d_{1,T^{\mathbb{C}}}(0, \varphi_i) \rightarrow \infty$ implies that $\mathcal{M}^{T}(\varphi_i)\rightarrow \infty$. Since $T \subset K$ is a maximal torus  it must contain the
center of $K$, i.e. $\mathbb{T}_{\textnormal{ext}} \subset T$.  Hence,  $\mathcal{M}^T|_{\mathcal{K}(M,\omega_0)^{K}}=\mathcal{M}^{\mathbb{T}_{\textnormal{ext}}}|_{\mathcal{K}(M,\omega_0)^{K}}$, where $\mathcal{K}(M,\omega_0)^{K} \subset \mathcal{K}(M, \omega_0)^T$ is the subspace of $K$-invariant $\omega_0$-relative K\"ahler potentials.
As any $T^{\mathbb{C}}$-orbit  of an element of $\mathcal{K}(M,\omega_0)^{K}$ belongs to its $G$-orbit,  the $d_{1,T^{\mathbb{C}}}$-properness of $\mathcal{M}^T$  implies that $\mathcal{M}^{\mathbb{T}_{\textnormal{ext}}}$ is 
$d_{1,G}$-proper when restricted to the subspace $\mathcal{K}(M,\omega_0)^{K}$.  By \cite[Theorem 3.1]{WH}, this implies  the existence of a $K$-invariant (and hence $T$-invariant) extremal K\"ahler metric in $[\omega_0]$.

Conversely, suppose $[\omega_0]$ admits a $T$-invariant extremal K\"ahler metric. Then the proof of  \cite[Theorem~3.7]{WH} yields the $T^{\mathbb{C}}$-properness of $\mathcal{M}^T$, should one have the uniqueness of the $T$-invariant extremal K\"ahler metrics modulo $T^{\mathbb{C}}$. Generalizing the result of Berman-Berndtsson \cite{BB} and Chen-Paun-Zeng \cite{CPZ}, Lahdili showed, in the more general context of $(\v,\w)$-weighted metrics \cite[Theorem~2, Remark~2]{AL2}, that the $T$-invariant extremal metrics are unique modulo the action of $T^{\mathbb{C}}$.

\section{An analytic criterion in the case of semi-simple principal toric fibrations}{\label{section-theoremA}}

\noindent This section is devoted to prove of the following result (where we use notation of \S\ref{section-rigidtoric}).

\begin{theorem}{\label{theoremA}}

Let $(M,J, \tilde{\omega}_0, \mathbb{T})$ be a semi-simple principal toric fibration with K\"ahler toric fiber $(V,J_V, \omega_0, \mathbb{T} )$ and let $(\v, \w)$ be the corresponding weight functions defined in $(\ref{weights})$. Then, the following statements are equivalent:

\begin{enumerate}
    \item there exists an extremal K\"ahler metric in $(M,J,[\tilde{\omega}_0],\mathbb{T})$;
      \item  there exists a compatible extremal K\"ahler metric in $(M,J,[\tilde{\omega}_0],\mathbb{T})$;
    \item  there exists a $(\v,\w)$-cscK metric in $(V,J_V,[\omega_0],\mathbb{T})$.

\end{enumerate}

\end{theorem}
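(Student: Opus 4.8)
The plan is to prove the theorem by establishing the three implications as a cycle, exploiting the fact that the equivalence $(2) \Leftrightarrow (3)$ is essentially already available from the weighted-geometry dictionary developed earlier in the paper. Indeed, Corollary~\ref{equivalence-scalar} gives a pointwise equivalence: a compatible K\"ahler metric $\tilde{\omega}$ on $M$ is extremal if and only if its corresponding toric metric $\omega$ on $V$ is $(\v,\w)$-cscK for the weights in $(\ref{weights})$. Since every compatible K\"ahler metric in $[\tilde{\omega}_0]$ is parametrized (via Lemma~\ref{changeofmetric} and the embedding of Corollary~\ref{change-of-metric-cor}) by a K\"ahler potential $\varphi \in \mathcal{K}(V,\omega_0)^{\mathbb{T}}$, the equivalence $(2)\Leftrightarrow(3)$ follows directly: a compatible extremal metric on $M$ is precisely a $(\v,\w)$-cscK metric on the fiber. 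The implication $(2)\Rightarrow(1)$ is trivial, since a compatible extremal metric is in particular an extremal metric. Hence the only substantive content is the implication $(1)\Rightarrow(2)$: from the mere existence of \emph{some} extremal metric in $[\tilde{\omega}_0]$ we must produce a \emph{compatible} one.

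For $(1)\Rightarrow(2)$ the strategy is analytic, via the properness criterion of Theorem~\ref{Chen--Cheng-existence}. First I would fix the maximal torus $T \subset \aut_{\red}(M)$ provided by Proposition~\ref{extremal-vector-fields}, which sits in the exact sequence $(\ref{exact-sequence})$ and contains $\mathbb{T}_M$; crucially, the extremal vector field $\xi_{\mathrm{ext}}$ lies in $\mathfrak{t}$. By the second assertion of Theorem~\ref{Chen--Cheng-existence} (the $T$-relative version), the existence of an extremal metric in $[\tilde{\omega}_0]$ is equivalent to the $d_{1,T^{\mathbb{C}}}$-properness of the relative Mabuchi energy $\mathcal{M}^T$ on $\mathcal{K}(M,\tilde{\omega}_0)^{T}$. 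The aim is to run the Chen--Cheng--He continuity path entirely inside the \emph{compatible} potentials $\mathcal{K}(V,\omega_0)^{\mathbb{T}} \hookrightarrow \mathcal{K}(M,\tilde{\omega}_0)^{T}$, so that the extremal metric it produces is automatically compatible, hence corresponds to a $(\v,\w)$-cscK metric on $V$.

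The key structural inputs that make this confinement possible are the decomposition results proved earlier. Lemma~\ref{equality-mabuchi-norm} shows $\mathcal{K}(V,\omega_0)^{\mathbb{T}}$ is \emph{totally geodesic} in $\mathcal{K}(M,\tilde{\omega}_0)^{\mathbb{T}}$ for the Mabuchi metric, and Lemma~\ref{restriction-distance} shows the $d_1$-distance restricts to the weighted distance $d^V_{1,\v}$; together with the geodesic connectedness from \cite{DG}, this lets me control the continuity path without leaving the fiber potentials. Proposition~\ref{decomposition-Lichne-prop} and Proposition~\ref{decomposition-Hashimoto} show that the Lichnerowicz and Hashimoto operators governing the linearization of the continuity-path equation decompose as a vertical (weighted) part plus base-directional Laplacian terms; restricting to $f \in \mathcal{C}^{\infty}(V)^{\mathbb{T}}$ (where $d_S f_x = 0$ by $(\ref{function-from-fiber})$) kills the base terms, so the relevant elliptic operators preserve the fiber subspace and reduce to their weighted toric analogues. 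The concrete plan is therefore: show that the Chen--Cheng a priori estimates (and He's extension to the extremal setting) can be carried out within $\mathcal{K}(V,\omega_0)^{\mathbb{T}}$, using that the linearized operator is self-adjoint and elliptic there (Remark~\ref{remark-Hashimoto}), so that the path stays in compatible metrics and converges to a compatible extremal metric; one then invokes $(\ref{weighted-scalarcurv})$ and Corollary~\ref{equivalence-scalar} to identify the limit as a $(\v,\w)$-cscK metric on $V$, closing the cycle.

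The main obstacle I expect is precisely the confinement of the continuity method to the compatible subspace. While the operator decompositions guarantee that the subspace is preserved at the \emph{infinitesimal} level, one must verify that the full Chen--Cheng--He machinery---the openness/closedness argument and especially the a priori $C^0, C^2$ and higher estimates along the path---is compatible with this restriction, i.e.\ that solutions of the path equation starting from a compatible potential remain compatible and that the estimates descend to the weighted toric quantities on $V$. Establishing that the properness of $\mathcal{M}^T$ on all of $\mathcal{K}(M,\tilde{\omega}_0)^T$ transfers to properness of the weighted Mabuchi energy $\mathcal{M}_{\v,\w}$ on $\mathcal{K}(V,\omega_0)^{\mathbb{T}}$, and that this suffices to drive the fiber-confined path to a solution, is the technical heart of the argument.
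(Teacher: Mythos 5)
Your plan follows the same route as the paper: $(2)\Leftrightarrow(3)$ is Corollary \ref{equivalence-scalar}, $(2)\Rightarrow(1)$ is trivial, and $(1)\Rightarrow(2)$ is proved by confining Chen's continuity path (\ref{continuity-path-weithed}) to $\mathcal{K}(V,\omega_0)^{\mathbb{T}}$, with openness coming from the decompositions of the Lichnerowicz and Hashimoto operators (Propositions \ref{decomposition-Lichne-prop} and \ref{decomposition-Hashimoto}) and closedness from the $d_{1,T^{\mathbb{C}}}$-properness of $\mathcal{M}^{T}$ supplied by Theorem \ref{Chen--Cheng-existence}. However, the sentence with which you dispose of closedness --- ``the path stays in compatible metrics and converges to a compatible extremal metric'' --- hides a genuine gap, located exactly at the spot you flag as the technical heart, and its resolution requires two ideas that your plan does not contain.

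The properness of $\mathcal{M}^{T}$ only bounds $\inf_{\gamma\in T^{\mathbb{C}}}d_1(0,\gamma\cdot\tilde{\varphi}_{t_i})$ along solutions $\tilde{\varphi}_{t_i}$ of (\ref{continuity-path-weithed}) with $t_i\to 1$. To invoke He's compactness theorem (\cite[Theorem 3.5]{WH}) one must therefore replace $\tilde{\varphi}_{t_i}$ by translates $\gamma_i\cdot\tilde{\varphi}_{t_i}$ with $\gamma_i\in T^{\mathbb{C}}$, and these translates are \emph{not} compatible: the action of $\gamma_i$ changes the connection $1$-form $\theta$ and the base metrics $\omega_a$, so the translated potentials leave $\mathcal{K}(V,\omega_0)^{\mathbb{T}}$. (The Fr\'echet-closedness of $\mathcal{K}(V,\omega_0)^{\mathbb{T}}$ in $\mathcal{K}(M,\tilde{\omega}_0)^{T}$, which handles the case $t_i\to t_1<1$ where no translation is needed, is useless here.) As stated, your limit is therefore a priori merely \emph{some} extremal metric in $[\tilde{\omega}_0]$ --- no improvement on hypothesis (1) --- rather than a compatible one. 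The paper closes this gap as follows: since $T^{\mathbb{C}}$ commutes with $\mathbb{T}_M$ (Proposition \ref{extremal-vector-fields}), the $\mathbb{T}_M$-action on each translated metric is still rigid and semi-simple (Remark \ref{remark-semi-simple}), so by the classification of \cite{VA2} each translate is still given by the generalized Calabi ansatz, only with a new connection $\theta_i$ and new base metrics $\omega_{a,i}$, the constants $p_a$ and $c_a$ being unchanged because the K\"ahler class is fixed; then, in the smooth limit, differentiating the resulting extremal equation along each base factor $S_a$ shows that $Scal(\omega_{a,1})$ is constant, and $[\omega_{a,1}]=[\omega_a]$ forces $Scal(\omega_{a,1})=Scal_a$, which converts the limiting equation into $Scal_{\v}(\omega_1)=\w(m_1)$. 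This yields condition (3), and Corollary \ref{equivalence-scalar} then produces the compatible extremal metric of (2). Without these two steps the fiber-confinement argument does not close at $t=1$.
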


The statement $(2) \Leftrightarrow (3)$ is established in Corollary \ref{equivalence-scalar} whereas the statement $(2)\Rightarrow(1)$ is clear.  We focus on $(1) \Rightarrow (2)$.

We follow the argument of He \cite{WH} by restricting the continuity path of Chen \cite{XC5} to compatible K\"ahler metrics. We consider the continuity path for $ \varphi \in \mathcal{K}(V,\omega_0)^{\mathbb{T}} \subset \mathcal{K}(M,\tilde{\omega}_0)^{\mathbb{T}}$ given by

\begin{equation}{\label{continuity-path-weithed}}
  t\big(Scal_{\v}(\omega_{\varphi})-\w(m_{\varphi})\big) = (1-t)\big(\Lambda_{\omega_{\varphi},\v}(\chi)-n\big), \text{ } \text{ } t\in[0,1],
  \end{equation}

\noindent for some K\"ahler metric $\chi \in [\omega_0]$ that we will wisely choose in $(\ref{choice-chi})$. In the above formula

\begin{equation*}
 \Lambda_{\omega_{\varphi},\v}(\chi):=\Lambda_{\omega_{\varphi}}(\chi) +  \sum_{a=1}^k \frac{d_a \big( \langle p_a,m_{\chi} \rangle +c_{a}\big) }{\langle p_a,m_{\varphi} \rangle +c_{a}}
\end{equation*}

\noindent is a smooth function on $V$ equal to $\Lambda_{\tilde{\omega}_{\varphi}}(\tilde{\chi})$. By definition, a solution $\varphi_t$ at $t=1$ corresponds to a compatible extremal metric on $(M,J)$ or equivalently to a $(\v,\w)$-cscK on $(V,J_V)$. For $t_1 \in (0,1]$, we define

\begin{equation}{\label{set-solution}}
    S_{t_1}:=\{ t \in (0,t_1] \text{ }| \text{ } (\ref{continuity-path-weithed}) \text{ has a solution } \varphi_t \in \mathcal{K}(V,\omega_0)^{\mathbb{T}}\}.
\end{equation}

\noindent We need to show that $S_1$ is open, closed and non empty.

\subsection{Openness}

\begin{prop}{\label{openess}}
$S_1$ is open and non empty.
\end{prop}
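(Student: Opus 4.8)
The plan is to prove openness of $S_1$ by the implicit function theorem in suitable weighted Hölder (or Sobolev) spaces, working on the fiber $V$ but keeping track of the fact that equation $(\ref{continuity-path-weithed})$ is the restriction to $\mathcal{K}(V,\omega_0)^{\mathbb{T}}$ of the corresponding continuity path on $(M,J)$. Concretely, suppose $t_0 \in S_1$ with solution $\varphi_{t_0}$. I would rewrite $(\ref{continuity-path-weithed})$ as $\mathcal{F}(t,\varphi)=0$ for a map $\mathcal{F}$ between appropriate Banach completions of the $\mathbb{T}$-invariant Kähler potentials, and compute the linearization $d_\varphi\mathcal{F}$ at $(t_0,\varphi_{t_0})$. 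By the general structure of the scalar-curvature-type operators involved, this linearization is, up to lower order terms, governed by the weighted Lichnerowicz operator $\mathbb{L}^V_{\omega,\v}$ of $(\ref{define-lichne-pondéré})$ together with the weighted Hashimoto operator $\mathbb{H}^{\chi}_{\omega,\v}$; both are self-adjoint elliptic (Remark $\ref{remark-Hashimoto}$ and the discussion after $(\ref{define-lichne-pondéré})$).

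The key steps, in order, are as follows. First I would identify the linearized operator at a solution: differentiating the left side $Scal_\v(\omega_\varphi)$ produces (minus) a fourth-order weighted Lichnerowicz-type operator, while differentiating the right side $(1-t)(\Lambda_{\omega_\varphi,\v}(\omega_0)-n)$ produces the second-order weighted Hashimoto operator $\mathbb{H}^{\omega_0}_{\omega_{\varphi},\v}$. For $t_0\in(0,1)$ the combined operator is a genuine fourth-order elliptic operator with a strictly positive sign from the $(1-t_0)$ term. Second, using self-adjointness with respect to $\v(m_\omega)\omega^{[\ell]}$ and ellipticity, I would show the linearization is an isomorphism on the spaces of functions $\mathcal{I}_\v$-normalized as in $(\ref{normalized-compatible-potential})$: its kernel consists of functions that are simultaneously killed by $\mathbb{L}^V_{\omega,\v}$ and $\mathbb{H}^{\omega_0}_{\omega_\varphi,\v}$, and the kernel of $\mathbb{H}^{\omega_0}_{\omega_\varphi,\v}$ is constants (the weighted analogue of Hashimoto's result, via Remark $\ref{remark-Hashimoto}$), so after normalization the kernel is trivial. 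Fredholm theory then gives surjectivity. Third, the implicit function theorem produces solutions $\varphi_t\in\mathcal{K}(V,\omega_0)^{\mathbb{T}}$ for $t$ near $t_0$, and elliptic regularity upgrades them to smooth potentials, so a neighborhood of $t_0$ in $(0,1]$ lies in $S_1$. For non-emptiness, I would solve the equation at small $t>0$: as $t\to 0^+$ the path degenerates to $\Lambda_{\omega_\varphi,\v}(\omega_0)=n$, which is (the weighted form of) a Monge–Ampère / prescribed-trace equation solvable for small $t$ by the same linearization being invertible at $t=0$ together with the solution $\varphi\equiv 0$ giving $\mathcal{F}(0,0)=0$; alternatively one invokes the corresponding openness result of He \cite{WH} on $(M,J)$ and restricts it to compatible potentials using Lemma $\ref{changeofmetric}$ and Corollary $\ref{change-of-metric-cor}$.

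The main obstacle I anticipate is establishing the invertibility of the linearized operator cleanly, specifically controlling its kernel. The subtlety is that the fourth-order part (weighted Lichnerowicz) has a potentially large kernel — namely the $\v$-weighted Killing potentials, corresponding to holomorphy potentials of $\mathbb{T}$-invariant holomorphic vector fields — so one cannot rely on $\mathbb{L}^V_{\omega,\v}$ alone. The decisive point is that for $t_0<1$ the second-order term $(1-t_0)\mathbb{H}^{\omega_0}_{\omega_\varphi,\v}$ breaks this degeneracy: any kernel element must lie in $\ker\mathbb{H}^{\omega_0}_{\omega_\varphi,\v}=\{\text{constants}\}$, which is removed by the normalization $(\ref{normalized-compatible-potential})$. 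Making this pairing argument rigorous — integrating the linearized equation against the kernel element with respect to the weighted volume $\v(m_\omega)\omega^{[\ell]}$ and exploiting the self-adjointness of both $\mathbb{L}^V_{\omega,\v}$ and $\mathbb{H}^{\omega_0}_{\omega_\varphi,\v}$ to conclude the element is constant — is where the real work lies, and it is exactly here that the weighted formalism of Section $\ref{section-operator}$ is indispensable.
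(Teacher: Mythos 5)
Your treatment of \emph{openness} is essentially sound, and at the decisive step it takes a genuinely different route from the paper. You identify the same linearized operator $\mathcal{L}^V_{\omega_{t_0},\v}=-2t_0\,\mathbb{L}^V_{\omega_{t_0},\v}+(1-t_0)\,\mathbb{H}^{\omega_0}_{\omega_{t_0},\v}$, but you prove its invertibility intrinsically on $V$: pair a kernel element with itself against $\v(m_{t_0})\omega_{t_0}^{[\ell]}$, conclude that it lies in $\ker \mathbb{H}^{\omega_0}_{\omega_{t_0},\v}=\{\text{constants}\}$, and deduce surjectivity from the Fredholm alternative. Two caveats: what this pairing really requires is that the two quadratic forms $f\mapsto \int_V (\mathbb{L}^V_{\omega_{t_0},\v}f)\,f\,\v(m_{t_0})\omega_{t_0}^{[\ell]}$ and $f\mapsto \int_V (\mathbb{H}^{\omega_0}_{\omega_{t_0},\v}f)\,f\,\v(m_{t_0})\omega_{t_0}^{[\ell]}$ are semi-definite \emph{of the same sign} --- self-adjointness alone, which is what you invoke, proves nothing, since two self-adjoint operators can cancel; the semi-definiteness of the weighted Hashimoto form is true but must be checked (e.g.\ by restricting the divergence-form identity on $M$ to fiber functions via Remark \ref{remark-Hashimoto}). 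The paper avoids this entirely: it quotes He's isomorphism \cite[Lemma 3.1]{WH} for $\mathcal{L}_{\tilde{\omega}_{t_0}}$ on $M$ (so injectivity of the restriction to fiber functions is automatic) and proves surjectivity of the restriction by an $L^2$-duality contradiction argument (Proposition \ref{operator-iso}), resting on the decompositions of Propositions \ref{decomposition-Lichne-prop} and \ref{decomposition-Hashimoto}. Your route is more self-contained; the paper's route uses \cite{WH} as a black box and thereby avoids re-proving the weighted kernel and sign facts.

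The genuine gap is \emph{non-emptiness}. Your primary mechanism --- the implicit function theorem at $(t,\varphi)=(0,0)$ --- fails, because the continuity path $(\ref{continuity-path-weithed})$ degenerates in order at $t=0$: the factor $t$ multiplies the fourth-order term, so $d_{\varphi}\mathcal{F}|_{(0,0)}$ is only the second-order operator coming from $\Lambda_{\omega_{\varphi},\v}(\omega_0)$. As a map from a normalized $\mathcal{C}^{4,\alpha}$ space to a normalized $\mathcal{C}^{0,\alpha}$ space (the spaces in which the equation must be posed as soon as $t>0$), its image is contained in $\mathcal{C}^{2,\alpha}$; it is injective but not surjective, hence not a Banach isomorphism, and the IFT does not apply --- and moving up the H\"older scale does not help, since the linearization always gains only two derivatives while the nonlinear map loses four. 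This is precisely why Chen and He do not start the path at $t=0$ but construct an explicit solution at a small $t_0>0$, and the paper follows them. Your fallback --- ``invoke He's result on $(M,J)$ and restrict to compatible potentials'' --- is also not a proof: He's starting-point lemma produces $\tilde{\omega}_0+dd^c(f/r)$ with $f$ the normalized solution of a Poisson equation on $M$, and neither He's argument nor Lemma \ref{changeofmetric} and Corollary \ref{change-of-metric-cor} (which merely embed compatible potentials into all potentials) tell you that $f$ is a fiber function, i.e.\ that the resulting metric is compatible. That is exactly the content of the paper's Lemma \ref{prop-starting-point}, and it is the idea missing from your proposal: since the right-hand side $Scal_{\v}(\omega)-\w(m_{\omega})$ is a function on $V$, and since $\Delta^V_{\omega,\v}$ is an isomorphism of the weighted zero-mean fiber functions which coincides with $\Delta_{\tilde{\omega}}$ on such functions (by the splitting $\Delta_{\tilde{\omega}}=\Delta^V_{\omega,\v}+\Delta^S_x$ of Proposition \ref{decomposition-Lichne-prop}), the unique normalized solution $f$ of $\Delta_{\tilde{\omega}}f=Scal_{\v}(\omega)-\w(m_{\omega})$ automatically lies in $\mathcal{C}^{\infty}_{\v}(V,\omega)^{\mathbb{T}}$; only then is $\tilde{\omega}_0+dd^c(f/r)$ compatible, and the choice $r=t_0^{-1}-1$ exhibits a solution of $(\ref{continuity-path-weithed})$ in $\mathcal{K}(V,\omega_0)^{\mathbb{T}}$. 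Without this fiber-function property of the Poisson solution, you have produced no compatible starting point, and $S_1$ could a priori be empty.
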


\noindent For a compatible K\"ahler form $\tilde{\omega}$ on $(M,J)$ corresponding to a K\"ahler metric $\omega$ on $(V,J_V)$, we denote by  $C^{\infty}(M,\tilde{\omega})^{\mathbb{T}}$ the space of $\mathbb{T}_M$-invariant smooth functions with zero mean value with respect to $\tilde{\omega}^{[n]}$ and by  $\mathcal{C}_{\v}^{\infty}(V, \omega)^{\mathbb{T}} \subset C^{\infty}(M,\tilde{\omega})^{\mathbb{T}}$ the space of $\mathbb{T}_V$-invariant smooth functions with zero mean value with respect to $\v(m_{\omega})\omega^{[\ell]}$. The following is an adaptation of  \cite[Lemma~3.2]{WH}.

\begin{lemma}{\label{prop-starting-point}}
$S_1$ is non empty.
\end{lemma}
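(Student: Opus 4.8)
The plan is to establish Lemma \ref{prop-starting-point} by exhibiting an explicit solution of the continuity path \eqref{continuity-path-weithed} at the value $t=0$, thereby showing that $S_1 \neq \emptyset$. At $t=0$, equation \eqref{continuity-path-weithed} reduces to $\Lambda_{\omega_\varphi,\v}(\omega_0) = n$, or equivalently, using the identification with $\Lambda_{\tilde\omega_\varphi}(\tilde\omega_0)$, the equation $\Lambda_{\tilde\omega_\varphi}(\tilde\omega_0) = n$ on the total space $M$. The natural candidate solution is the reference potential itself, $\varphi = 0$, since $\Lambda_{\tilde\omega_0}(\tilde\omega_0) = n$ trivially (here $n = \ell + d$ is the complex dimension of $M$). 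First I would verify that $\varphi=0$ genuinely solves the $t=0$ equation in the weighted formulation on $(V,J_V)$; this amounts to checking that the weighted trace $\Lambda_{\omega_0,\v}(\omega_0)$ equals $n$, which follows directly from the definition of $\Lambda_{\omega_\varphi,\v}(\omega_0)$ given just after \eqref{continuity-path-weithed}, evaluated at $\varphi=0$: the first term $\Lambda_{\omega_0}(\omega_0) = \ell$ and the summation contributes $\sum_{a=1}^k d_a = d$, so the total is $\ell + d = n$.

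Having a solution at $t=0$, the substance of the argument is to promote this to nonemptiness of $S_1$, i.e.\ to show solutions persist for small $t>0$. Following the adaptation of \cite[Lemma~3.2]{WH}, I would invoke the openness mechanism: the linearization of the continuity path operator at the $t=0$ solution is, up to the weight $\v$, a weighted elliptic operator of the Hashimoto type introduced in \S\ref{section-operator}. The relevant linearized operator is $\mathbb{H}^{\chi}_{\omega,\v}$ (Proposition \ref{decomposition-Hashimoto} and Remark \ref{remark-Hashimoto}), which is a second-order elliptic self-adjoint operator with respect to $\v(m_\omega)\omega^{[\ell]}$, whose kernel consists of the constants. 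Because we work on the $\mathcal{I}_\v$-normalized space $\mathring{\mathcal{K}}_\v(V,\omega_0)^{\mathbb{T}}$ where the constants are quotiented out, the linearized operator is an isomorphism between the appropriate Banach completions (say weighted Hölder spaces $C^{k,\alpha}$ of $\mathbb{T}$-invariant functions with zero weighted mean). The implicit function theorem then yields a solution $\varphi_t$ for $t$ in a neighborhood of $0$, so $S_1 \supset (0,\varepsilon)$ for some $\varepsilon>0$ and is nonempty.

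The main obstacle I anticipate is the \emph{invertibility} of the linearized operator, which requires showing the kernel is exactly the constants in the weighted setting. This is where the weighted Hashimoto formalism from \S\ref{section-operator} is essential: the decomposition in Proposition \ref{decomposition-Hashimoto} guarantees that the restriction of the total-space operator $\mathbb{H}^{\tilde\chi}_{\tilde\omega}$ to $\mathbb{T}$-invariant functions coming from the fiber coincides with $\mathbb{H}^{\chi}_{\omega,\v}$, inheriting ellipticity, self-adjointness, and the constant-kernel property. One must be careful that the linearization at $t=0$ involves the weighted trace operator rather than the weighted scalar curvature; the term $\Lambda_{\omega_\varphi,\v}(\omega_0)$ is of second order in $\varphi$, and its linearization at $\varphi=0$ is (a multiple of) a weighted Laplacian-type operator $\Delta^V_{\omega_0,\v}$ from \eqref{expression-v-laplacien}, whose kernel is again the constants by self-adjointness and the maximum principle on the connected manifold $V$. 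Once this Fredholm-alternative / surjectivity input is in place, the implicit function theorem applies verbatim and the nonemptiness of $S_1$ follows. I would present the detailed verification that $\varphi=0$ solves the $t=0$ equation and that the linearization is the stated invertible operator, deferring the full openness argument (for general $t_1 \in S_1$) to the subsequent Proposition \ref{openess}.
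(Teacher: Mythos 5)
Your starting observation is correct: $\varphi=0$ solves (\ref{continuity-path-weithed}) at $t=0$, and your computation $\Lambda_{\omega_0,\v}(\omega_0)=\ell+\sum_a d_a=n$ is exactly right. The gap is in the step where you promote this to small $t>0$ by the implicit function theorem. Write the path operator as
\begin{equation*}
G(\varphi,t)\;=\;t\big(Scal_{\v}(\omega_{\varphi})-\w(m_{\varphi})\big)-(1-t)\big(\Lambda_{\omega_{\varphi},\v}(\omega_0)-n\big).
\end{equation*}
For $t>0$ this is fourth order in $\varphi$, so the only Banach setting in which $G(\cdot,t)$ is defined and continuous near $t=0$ is of the type $G:\mathcal{C}^{4,\alpha}(V)^{\mathbb{T}}\times[0,1]\to\mathcal{C}^{0,\alpha}(V)^{\mathbb{T}}$ (zero-mean parts understood). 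But at $(\varphi,t)=(0,0)$ the linearization in $\varphi$ is, as you note yourself, the \emph{second order} operator $-\Delta^V_{\omega_0,\v}$: its image from $\mathcal{C}^{4,\alpha}$ lies inside $\mathcal{C}^{2,\alpha}$, so it is not surjective onto the zero-mean part of $\mathcal{C}^{0,\alpha}$, and the implicit function theorem does not apply. The mismatch is structural and cannot be fixed by changing regularity: the $t$-term loses four derivatives while the linearization at $t=0$ loses only two, so with $\varphi\in\mathcal{C}^{6,\alpha}$ the target must be $\mathcal{C}^{2,\alpha}$ and the linearization maps into $\mathcal{C}^{4,\alpha}$, and so on. This degeneration of the linearized operator at $t=0$ (from fourth order to second order) is precisely the reason Chen's continuity method, and its adaptation here, starts at a small \emph{positive} time $t_0$ rather than at $t=0$; the phrase ``the implicit function theorem applies verbatim'' is where your proof breaks. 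A rescue along your lines would require a singular-perturbation argument with estimates uniform in $t$, which is substantially more than what you sketch.

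The paper's proof (following \cite[Lemma 3.2]{WH}) avoids $t=0$ entirely by exhibiting an explicit solution at some $t_0>0$. One solves the weighted Poisson equation $\Delta_{\tilde\omega}f=Scal_{\v}(\omega)-\w(m_{\omega})$, which is solvable among fiber functions because $\Delta^V_{\omega,\v}$ is an isomorphism of $\mathcal{C}^{\infty}_{\v}(V,\omega)^{\mathbb{T}}$, cf. (\ref{iso-laplacian-poid}); since $f$ is a $\mathbb{T}_V$-invariant function on $V$, the form $\tilde\omega=\tilde\omega_0+dd^c\tfrac{f}{r}$ is K\"ahler \emph{and compatible} for $r$ large (Lemma \ref{changeofmetric}). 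Then the elementary identity
\begin{equation*}
\Delta_{\tilde\omega}f=-r\,\Lambda_{\tilde\omega}\,dd^c\tfrac{f}{r}=r\big(\Lambda_{\tilde\omega}\tilde\omega_0-n\big)
\end{equation*}
converts the Poisson equation into (\ref{continuity-path-weithed}) at $t_0$ determined by $r=t_0^{-1}-1$. The two essential ingredients — the weighted isomorphism (\ref{iso-laplacian-poid}) forcing $f$ to live on the fiber, and the algebraic trade between the Poisson equation and the trace term — are what replace your implicit function theorem step, and at such a starting point $t_0>0$ the linearized operator $-2t_0\mathbb{L}^V_{\omega,\v}+(1-t_0)\mathbb{H}^{\omega_0}_{\omega,\v}$ is genuinely fourth order elliptic, so that openness (Proposition \ref{openess}) can then be run.
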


\begin{proof}

Let $\omega$ a K\"ahler metric on $(V,J_V)$ and $\tilde{\omega}$ its associate compatible K\"ahler metric on $(M,J)$ via (\ref{metriccalabidata}). Since $\Delta_{\omega,\v}^V$ is self-adjoint with respect to $\v(m_{\omega})\omega^{[\ell]}$, it follows from the proof of Proposition \ref{operator-iso} below that

\begin{equation}{\label{iso-laplacian-poid}}
  \Delta_{\omega,\v}^V : \mathcal{C}_{\v}^{\infty}(V, \omega)^{\mathbb{T}} \longrightarrow \mathcal{C}_{\v}^{\infty}(V, \omega)^{\mathbb{T}}  
\end{equation}

\noindent is an isomorphism. Denote by $f \in C^{\infty}(M,\tilde{\omega})^{\mathbb{T}}$ the unique solution of

\begin{equation}{\label{poisson-solution}}
\Delta_{\tilde{\omega}} f = Scal_{\v}(\omega) - \w(m_{\omega}).
\end{equation}

\noindent  By $(\ref{iso-laplacian-poid})$,  $f \in \mathcal{C}_{\v}^{\infty}(V, \omega)^{\mathbb{T}}$. Now we choose

\begin{equation}{\label{choice-chi}}
   \tilde{ \chi}:=\tilde{\omega}-dd^c\frac{f}{r}.
\end{equation}

 Since $f$ is a $\mathbb{T}_V$-invariant smooth function on $V$, $\tilde{\chi}$ is both K\"ahler and compatible  for $r$ sufficiently large by Lemma \ref{changeofmetric}. We denote by $\chi$ its corresponding K\"ahler metric on $(V,J_V)$. Then

\begin{equation*}
\begin{split}
\Delta_{\tilde{\omega}}f=&r\Delta_{\tilde{\omega}}f\frac{1}{r} = -r\Lambda_{\tilde{\omega}}dd^c\frac{f}{r} \\
=& r \Lambda_{\tilde{\omega}} \bigg(\tilde{\omega}-dd^c\frac{f}{r} - \tilde{\omega} \bigg) \\
=&  r\big( \Lambda_{\omega,\v}(\chi) - n \big). 
\end{split}
\end{equation*}

\noindent Now let us write $r=t_0^{-1}-1$, for $t_0 \in (0,1)$ sufficiently small.  Then $(\omega,t_0)$ is solution of $(\ref{continuity-path-weithed})$.

\end{proof}

Now we show that $S_1$ is open. We fix $(\omega_{t_0},t_0)$ the solution of (\ref{continuity-path-weithed}) given by Lemma~\ref{prop-starting-point}. Let  $\tilde{\omega}_{t_0}=\omega_0 +dd^c\varphi_{t_0}$ be its associated compatible K\"ahler metric on $(M,J)$, with $\varphi_{t_0} \in \mathcal{K}(V,\omega)^{\mathbb{T}}$.  Let $\pi : \mathcal{C}^{\infty}(M)^{\mathbb{T}} \longrightarrow \mathcal{C}^{\infty}(M,\tilde{\omega}_{t_0})^{\mathbb{T}}$  be the linear projection:

\begin{equation*}
    \pi(f):= f - \frac{1}{\int_M  \tilde{\omega}^{[n]}_{t_0}} \int_M f \tilde{\omega}^{[n]}_{t_0}.
\end{equation*}

\noindent We consider

\begin{equation*}
R : \mathring{\mathcal{K}}(M,\tilde{\omega}_0)^{\mathbb{T}} \times [0,1] \longrightarrow  \mathcal{C}^{\infty}(M)^{\mathbb{T}},
\end{equation*}

\noindent defined by

\begin{equation*}
    R(\varphi,t):= t\big(Scal(\tilde{\omega}_{\varphi})  -  \Pi_{\tilde{\omega}_{\varphi}}(Scal(\tilde{\omega}_{\varphi})\big) - (1-t)(\Lambda_{\tilde{\omega}_{\varphi}}(\tilde{\chi})-n).
\end{equation*}

 \noindent  The linearization of the composition $\pi \circ R$ at $(\varphi_{t_0},t_0)$ is given by 

\begin{equation}
D (\pi \circ R)(\varphi_{t_0},t_0)[f,s]= \pi \bigg(\mathcal{L}_{\tilde{\omega}_{t_0}}f+s\bigg(Scal(\tilde{\omega}_{t_0}) -  \Pi_{\tilde{\omega}_{t_0}}\big(Scal(\tilde{\omega}_{t_0})\big) +\Lambda_{\tilde{\omega}_{t_0}}(\tilde{\chi})-n \bigg)\bigg),
\end{equation}

\noindent where

\begin{equation*}
\mathcal{L}_{\tilde{\omega}_{t_0}}=-2t_0\mathbb{L}_{\tilde{\omega}_{t_0}}+(1-t_0)\mathbb{H}^{\tilde{\chi}}_{\tilde{\omega}_{t_0}}.
\end{equation*}

 \noindent Above we used the notation

 \begin{equation*}
 \begin{split}
   \mathbb{L}_{\tilde{\omega}_{t_0}}f:&=\delta \delta D^-df \\
   &= \frac{1}{2}\Delta^2_{\tilde{\omega}_{t_0}}f+g_{\tilde{\omega}_{t_0}}\big(dd^cf,Ric(\varphi_{t_0})\big) + \frac{1}{2}g_{\tilde{\omega}_{t_0}}\big(df,dScal(\varphi_{t_0})\big),
  \end{split} 
 \end{equation*}

\noindent

\noindent where $D^-d$ and $\delta$ is introduced in $(\ref{define-lichne-pondéré})$ and  and $\mathbb{H}^{\tilde{\chi}}_{\tilde{\omega}_{t_0}}$ is introduced in (\ref{Hashimoto-operator-definition}). Since $\mathcal{L}_{\tilde{\omega}_{t_0}}$ is a self-adjoint operator with respect to $\tilde{\omega}_{t_0}^{[n]}$ we get

\begin{equation*}
    D (\pi \circ R)(\varphi_{t_0},t_0)[f,s]=\mathcal{L}_{\tilde{\omega}_{t_0}}f.
\end{equation*}

\noindent By Proposition \ref{decomposition-Lichne-prop} and Proposition \ref{decomposition-Hashimoto}, the restriction of $\mathcal{L}_{\tilde{\omega}_{t_0}}$  to $\mathcal{C}^{\infty}(V)^{\mathbb{T}}$ is equal to $ \mathcal{L}^V_{\omega_{t_0},\v}$, where

\begin{equation}{\label{decomposition-our-operator-prop}}
   \mathcal{L}^V_{\omega_{t_0},\v}:= -2t\mathbb{L}^V_{\omega_{t_0},\v}+(1-t)\mathbb{H}^{\chi}_{\omega_{t_0},\v}.
\end{equation}

\noindent In the above equality, $\omega_{t_0}$ is the K\"ahler metric on $(V,J_V)$ corresponding to $\tilde{\omega}_{t_0}$. By Proposition \ref{decomposition-Lichne-prop} and Proposition \ref{decomposition-Hashimoto} we obtain

\begin{equation}{\label{decomposition-notre-operator}}
\begin{split}
\mathcal{L}_{\tilde{\omega}_{t_0}}f=&\mathcal{L}^V_{\omega_{t_0},\v}f_s + t_0\mathbb{L}^S_xf_x+t_0\Delta^S_x\big(\Delta^V_{\omega_{t_0},\v}f_s\big) _x   \\
&+ t_0\Delta^V_{\omega_{t_0},\v} \big(\Delta_x^S f_x\big) _s+\sum_{a=1}^kU_a(x)\Delta_{a}f_x
\end{split}
\end{equation}

\noindent for all $f \in \mathcal{C}^{\infty}(M)^{\mathbb{T}}$, where $U_a(x)$ is a smooth function on $V$. By \cite[Lemma 3.1]{WH} the operator $\mathcal{L}_{\tilde{\omega}_{t_0}}$ extends to an isomorphism between H\"older spaces

\begin{equation}{\label{operator-continuity-path}}
    \mathcal{L}_{\tilde{\omega}_{t_0}} : \mathcal{C}^{4,\alpha}(M, \tilde{\omega}_{t_0})^{\mathbb{T}}\longrightarrow \mathcal{C}^{0,\alpha}(M, \tilde{\omega}_{t_0})^{\mathbb{T}},
\end{equation}

\noindent where  $\mathcal{C}^{4,\alpha}(M, \tilde{\omega}_{t_0})^{\mathbb{T}}$ is the space of $\mathbb{T}_M$-invariant functions with regularity $(4,\alpha)$ with zero mean value with respect to $\tilde{\omega}_{t_0}^{[n]}$ and similarly for $\mathcal{C}^{0,\alpha}(M, \tilde{\omega}_{t_0})^{\mathbb{T}}$. By (\ref{decomposition-our-operator-prop}), the restriction of the operator $\mathcal{L}_{\tilde{\omega}_{t_0}}$  to the space $\mathcal{C}^{4,\alpha}_{\v}(V,\omega_{t_0})^{\mathbb{T}}$ is equal to $\mathcal{L}_{\omega_{t_0},\v}^V$, where $\mathcal{C}^{4,\alpha}_{\v}(V,\omega_{t_0})^{\mathbb{T}}$ is the space of $\mathbb{T}_V$-invariant functions of regularity $(4,\alpha)$ with zero mean value with respect to $\v(m_{t_0})\omega^{[\ell]}_{t_0}$.

\begin{prop}{\label{operator-iso}}
 The operator  $\mathcal{L}^V_{\omega_{t_0},\v} : \mathcal{C}^{4,\alpha}_{\v}(V,\omega_{t_0})^{\mathbb{T}} \longrightarrow \mathcal{C}^{0,\alpha}_{\v}(V,\omega_{t_0})^{\mathbb{T}}$ is an isomorphism.
\end{prop}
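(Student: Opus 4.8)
The plan is to deduce the isomorphism property of $\mathcal{L}^V_{\omega_{t_0},\v}$ from the corresponding property of the global operator $\mathcal{L}_{\tilde{\omega}_{t_0}}$ on $M$, which is already guaranteed by \cite[Lemma 3.1]{WH} and recorded in $(\ref{operator-continuity-path})$. The crucial structural input is the decomposition $(\ref{decomposition-notre-operator})$, which shows that $\mathcal{L}_{\tilde{\omega}_{t_0}}$ preserves the closed Fr\'echet (and H\"older) subspace $\mathcal{C}^{\infty}(V)^{\mathbb{T}} \subset \mathcal{C}^{\infty}(M)^{\mathbb{T}}$ of functions constant along $S$: indeed, if $f=f_s$ depends only on the fiber variable, then $d_S f_x = 0$, so $\Delta^S_x f_x=0$, $\mathbb{L}^S_x f_x=0$, $\Delta_a f_x = 0$, and all the mixed terms in $(\ref{decomposition-notre-operator})$ vanish, leaving exactly $\mathcal{L}_{\tilde{\omega}_{t_0}} f = \mathcal{L}^V_{\omega_{t_0},\v} f$. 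Thus $\mathcal{L}^V_{\omega_{t_0},\v}$ is precisely the restriction of $\mathcal{L}_{\tilde{\omega}_{t_0}}$ to the invariant subspace, as already noted after $(\ref{operator-continuity-path})$.

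First I would establish that $\mathcal{L}^V_{\omega_{t_0},\v}$ is a self-adjoint, second order elliptic operator with respect to the weighted volume form $\v(m_{t_0})\omega_{t_0}^{[\ell]}$: ellipticity follows since its leading part is $-2t_0\mathbb{L}^V_{\omega_{t_0},\v}$ (up to the lower-order Hashimoto contribution), and self-adjointness follows from the self-adjointness of $\mathbb{L}^V_{\omega,\v}$ and $\mathbb{H}^{\chi}_{\omega,\v}$ recorded in \S\ref{section-operator} and Remark \ref{remark-Hashimoto}. I would then identify the kernel: since $\mathcal{L}^V_{\omega_{t_0},\v}$ is the restriction of $\mathcal{L}_{\tilde{\omega}_{t_0}}$, whose kernel on $\mathcal{C}^{4,\alpha}(M,\tilde{\omega}_{t_0})^{\mathbb{T}}$ is trivial by $(\ref{operator-continuity-path})$, any $f\in\mathcal{C}^{4,\alpha}_{\v}(V,\omega_{t_0})^{\mathbb{T}}$ in the kernel of $\mathcal{L}^V_{\omega_{t_0},\v}$ is also in the kernel of the global operator and hence vanishes; this gives injectivity. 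For surjectivity, standard elliptic theory for the formally self-adjoint elliptic operator $\mathcal{L}^V_{\omega_{t_0},\v}$ on the closed manifold $V$ yields the Fredholm alternative in the weighted H\"older spaces, so the image is the $L^2_{\v}$-orthogonal complement of the (trivial) cokernel, intersected with the zero-mean subspace; since the operator is self-adjoint and its kernel reduces to constants before passing to the zero-mean space, it is onto $\mathcal{C}^{0,\alpha}_{\v}(V,\omega_{t_0})^{\mathbb{T}}$.

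A subtlety I would address carefully is the matching of the normalizations: the zero-mean conditions on $M$ and on $V$ use the volume forms $\tilde{\omega}_{t_0}^{[n]}$ and $\v(m_{t_0})\omega_{t_0}^{[\ell]}$ respectively, and by the fibre-integration formula $(\ref{volume-form})$, for an $\mathbb{T}_V$-invariant $f$ depending only on the fibre the two mean values agree up to the fixed positive constant $\int_S \omega_S^{[d]}$. Hence the inclusion $\mathcal{C}^{4,\alpha}_{\v}(V,\omega_{t_0})^{\mathbb{T}}\hookrightarrow \mathcal{C}^{4,\alpha}(M,\tilde{\omega}_{t_0})^{\mathbb{T}}$ is well defined and compatible with the normalizations, and likewise at the $\mathcal{C}^{0,\alpha}$ level; this is what lets me transfer injectivity directly from $M$ to $V$ and identify the target spaces correctly.

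The main obstacle I anticipate is surjectivity, specifically ensuring that the image of $\mathcal{L}^V_{\omega_{t_0},\v}$ exhausts all of $\mathcal{C}^{0,\alpha}_{\v}(V,\omega_{t_0})^{\mathbb{T}}$ rather than a proper closed subspace. The clean way to handle this is to argue intrinsically on $V$: $\mathcal{L}^V_{\omega_{t_0},\v}$ is a formally self-adjoint elliptic operator with respect to $\v(m_{t_0})\omega_{t_0}^{[\ell]}$, so on the compact manifold $V$ the Fredholm index is zero and $\operatorname{coker}\cong\ker$ via the weighted pairing; the kernel consists of constants (the only $\v$-harmonic-type functions annihilated by the leading elliptic part together with the Hashimoto term, whose kernel is the constants by \cite[Lemma~1]{YH} adapted in Remark \ref{remark-Hashimoto}), and after quotienting by constants to pass to zero-$\v$-mean functions the operator becomes bijective. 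Combining zero index with trivial kernel on the zero-mean space then forces surjectivity, completing the argument.
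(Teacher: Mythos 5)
Your proposal is correct in substance, but it handles the surjectivity step by a genuinely different route than the paper. Both you and the paper treat injectivity identically: $\mathcal{L}^V_{\omega_{t_0},\v}$ is the restriction of the global operator $\mathcal{L}_{\tilde{\omega}_{t_0}}$, which is injective by \cite[Lemma 3.1]{WH}, and the zero-mean normalizations on $V$ and $M$ match because of the fibration formula $(\ref{volume-form})$ --- a point the paper leaves implicit and you rightly spell out. For surjectivity, however, the paper argues by contradiction and $L^2$-duality: if the image were a proper subspace of $\mathcal{C}^{0,\alpha}_{\v}(V,\omega_{t_0})^{\mathbb{T}}$, there would exist a nonzero $\phi \in L^2_{0,\v}(V)^{\mathbb{T}}$ annihilating it, and then, following the scheme of \cite[Lemma 8]{VA3} and using the decomposition $(\ref{decomposition-notre-operator})$ together with the orthogonality of the image to constants, this $\phi$ would annihilate the image of the full operator $\mathcal{L}_{\tilde{\omega}_{t_0}}$ on $M$, contradicting the surjectivity from \cite[Lemma 3.1]{WH}. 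You instead argue intrinsically on $V$: the weighted operator is formally self-adjoint with respect to $\v(m_{t_0})\omega_{t_0}^{[\ell]}$ and elliptic, hence Fredholm of index zero on the compact manifold $V$, so the cokernel is identified with the kernel, and kernel $=$ constants forces surjectivity onto the zero-$\v$-mean space. This is valid and arguably more self-contained; what the paper's route buys is that it never needs to set up Fredholm theory for the weighted operator or compute its kernel on the un-normalized space, since everything is outsourced to the known global isomorphism on $M$. Two small points in your write-up should be corrected: (i) $\mathcal{L}^V_{\omega_{t_0},\v}$ is a \emph{fourth-order} operator (its leading part $-2t_0\mathbb{L}^V_{\omega_{t_0},\v}$ has symbol a multiple of $|\xi|^4$), not second order as you state --- this does not affect the Fredholm argument, which only needs ellipticity; (ii) your parenthetical identification of the kernel tacitly conflates $\mathcal{L}^V_{\omega_{t_0},\v}f=0$ with the simultaneous vanishing of $\mathbb{L}^V_{\omega_{t_0},\v}f$ and $\mathbb{H}^{\omega_0}_{\omega_{t_0},\v}f$, which requires a sign (semi-definiteness) argument for the two quadratic forms; it is cleaner to use your own injectivity step instead: the kernel meets the zero-$\v$-mean hyperplane trivially and contains the constants, hence equals the constants.
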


\begin{proof}

Since $\mathcal{L}^V_{\omega_{t_0},\v}$ is the restriction of an injective operator, it is enough to prove the surjectivity. We proceed analogously to the proof of \cite[Lemma 8]{VA3}. 

We denote by $L^2_ {0,\v}(V)^{\mathbb{T}}$ the completion for the $L^2$-norm of $\mathcal{C}_{\v}^{0,\alpha}(V,\omega_{t_0})^{\mathbb{T}}$. We argue by contradiction. Assume $\mathcal{L}^V_{\omega_{t_0},\v} :\mathcal{C}_{\v}^{4,\alpha}(V,\omega_{t_0})^{\mathbb{T}} \rightarrow \mathcal{C}_{\v}^{0,\alpha}(V,\omega_{t_0})^{\mathbb{T}}$ is not surjective. Then, there exists $\phi \in  L^2_{0,\v}(V)^{\mathbb{T}}$ satisfying

\begin{equation}{\label{hypothese}}
    \int_V \mathcal{L}^V_{\omega_{t_0},\v}(f) \phi \v(m_{t_0}) \omega_{t_0}^{[\ell]}=0
\end{equation}

\noindent for all $f \in \mathcal{C}_{\v}^{4,\alpha}(V,\omega_{t_0})^{\mathbb{T}}$. We claim it implies

\begin{equation}{\label{Hypothesis2}}
        \int_M \mathcal{L}_{\tilde{\omega}_{t_0}}(f) \phi \tilde{\omega}_{t_0}^{[n]}=0
\end{equation}

\noindent for all $f \in \mathcal{C}^{4,\alpha}(M, \tilde{\omega}_{t_0})^{\mathbb{T}}$, which contradicts the surjectivity of $ \mathcal{L}_{\tilde{\omega}_{t_0}} : \mathcal{C}^{4,\alpha}(M, \tilde{\omega}_{t_0})^{\mathbb{T}} \longrightarrow \mathcal{C}^{0,\alpha}(M, \tilde{\omega}_{t_0})^{\mathbb{T}} $ established in \cite[Lemma 3.1]{WH}. Therefore, it is sufficient to show "$(\ref{hypothese}) \Rightarrow (\ref{Hypothesis2})$". For this we argue similar to the proof of \cite[Lemma 8]{VA3} using that the image of $\mathcal{L}^V_{\omega_{t_0},\v}$  is $L^2$-orthogonal to the subspace of constant functions with respect to $\v(m_{t_0})\omega^{[\ell]}_{t_0}$.

\end{proof}

\noindent By the Implicit Function Theorem applied to $\mathcal{L}^V_{\omega_{t_0},\v} : \mathcal{C}^{4,\alpha}_{\v}(V,\omega_{t_0})^{\mathbb{T}} \longrightarrow \mathcal{C}^{0,\alpha}_{\v}(V,\omega_{t_0})^{\mathbb{T}}$, we get a sequence of solutions $\{\varphi_{t_i}\}_{i\in \mathbb{N}}$ of (\ref{continuity-path-weithed}) of regularity $\mathcal{C}^{4,\alpha}$. By a well-known bootstrapping argument,  any solution of (\ref{continuity-path-weithed}), of regularity $\mathcal{C}^{4,\alpha}$ is in fact smooth. This concludes the proof of Proposition \ref{openess}.

\subsection{Closedness}

\begin{prop}
$S_{1}$ is closed
\end{prop}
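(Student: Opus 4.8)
The plan is to establish closedness via the standard continuity-method scheme: along the path $(\ref{continuity-path-weithed})$ a uniform \emph{a priori} estimate on the solutions $\varphi_t$, combined with the openness already proved, forces the solution set $S_1$ to be closed in $(0,1]$. Concretely, suppose $t_i \in S_1$ with $t_i \to \bar t$ and corresponding solutions $\varphi_{t_i} \in \mathcal{K}(V,\omega_0)^{\mathbb{T}}$. The goal is to produce a solution $\varphi_{\bar t}$, and the key is that the \emph{a priori} estimates of Chen--Cheng \cite{XC1, XC2} and He \cite{WH} (for the scalar curvature / twisted scalar curvature equation on the total space $M$) apply verbatim: by Lemma \ref{changeofmetric}, each $\varphi_{t_i}$ viewed as a $\mathbb{T}$-invariant potential on $M$ solves precisely the Chen continuity path $R(\varphi_{t_i}, t_i)=0$ on $(M,J,\tilde\omega_0)$, so their uniform $\mathcal{C}^{k,\alpha}$ estimates on $M$ transfer directly to the $\varphi_{t_i}$. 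The first step is therefore to record that $(\ref{continuity-path-weithed})$ on $V$ is the restriction to compatible potentials of the genuine continuity path on $M$, which was built into the setup of the section.

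The second step is to control the potentials \emph{a priori}. Here I would invoke the properness/entropy bounds that drive the Chen--Cheng argument: along the path, the relative Mabuchi energy $\mathcal{M}^{\mathbb{T}}$ (equivalently, by Lemma \ref{mabuchi-energy-restriction}, the weighted Mabuchi energy $\mathcal{M}_{\mathrm{v},\mathrm{w}}$) is monotone in $t$, which yields a uniform bound on the entropy and hence the uniform $L^\infty$, Laplacian, and higher estimates for $\tilde\omega_{\varphi_{t_i}}$ on $M$ exactly as in \cite[\S 3]{WH}. Crucially, because the $\varphi_{t_i}$ lie in the totally geodesic subspace $\mathcal{K}(V,\omega_0)^{\mathbb{T}}$ (Lemma \ref{equality-mabuchi-norm}) and the weighted operators on $V$ are the honest restrictions of the operators on $M$ (Propositions \ref{decomposition-Lichne-prop}, \ref{decomposition-Hashimoto}), all these estimates respect the fiber structure and give uniform $\mathcal{C}^{4,\alpha}_{\mathrm{v}}(V,\omega_0)^{\mathbb{T}}$ bounds on the $\varphi_{t_i}$ (after normalizing by $\mathcal{I}_{\mathrm{v}}$ to fix the additive constant, using $(\ref{Ir-functionnal})$--$(\ref{normalized-compatible-potential})$).

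The third step is compactness and passage to the limit: from the uniform $\mathcal{C}^{4,\alpha}$ bound and Arzel\`a--Ascoli, a subsequence $\varphi_{t_i}$ converges in $\mathcal{C}^{4,\beta}$ (for $\beta<\alpha$) to a limit $\varphi_{\bar t} \in \mathcal{C}^{4,\alpha}_{\mathrm{v}}(V,\omega_0)^{\mathbb{T}}$, which solves $(\ref{continuity-path-weithed})$ at $t=\bar t$; the bootstrapping argument used at the end of the openness proof then upgrades $\varphi_{\bar t}$ to a smooth solution, so $\bar t \in S_1$. One must separately check that the limit form $\omega_{\bar t}$ is genuinely K\"ahler (strict positivity), which follows from the uniform two-sided Laplacian bound that prevents degeneration of the metric.

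The main obstacle I anticipate is the \emph{a priori} estimate itself: one needs the uniform $\mathcal{C}^0$ and complex-Hessian bounds of Chen--Cheng to hold in the present \emph{weighted/fibered} setting. Rather than reproving them, the efficient route — and the one dictated by the architecture of this section — is to lift everything to $M$ and cite \cite{XC1, XC2, WH} directly, using Lemma \ref{changeofmetric} to identify the fiber equation with the total-space equation; the only real work is verifying that the uniform bounds obtained on $M$ are compatible with staying inside the closed subspace $\mathcal{K}(V,\omega_0)^{\mathbb{T}}$ (via $(\ref{function-from-fiber})$) and that the $\mathbb{T}$-invariance and normalization are preserved in the limit. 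The subtlety worth flagging is the normalization of the additive constant and of the Killing-potential projection $\Pi_{\tilde\omega_\varphi}$, which must be shown to converge; this is handled using the Futaki--Mabuchi invariance of $\ell_{\mathrm{ext}}$ recalled in $(\ref{affin-extremal})$, guaranteeing the target function $\w(m_{\varphi})$ in $(\ref{continuity-path-weithed})$ is fixed along the path.
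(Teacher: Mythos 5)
Your proposal has a genuine gap, and it sits exactly where the paper's proof does its real work. The $d_{1,T^{\mathbb{C}}}$-properness furnished by Theorem \ref{Chen--Cheng-existence} bounds only the \emph{relative} distance $d_{1,T^{\mathbb{C}}}(0,\varphi_{t_i})=\inf_{\gamma\in T^{\mathbb{C}}}d_1(0,\gamma\cdot\varphi_{t_i})$, not $d_1(0,\varphi_{t_i})$ itself. Hence, as $t_i\to 1$, to run the compactness argument of \cite[Theorem~3.5]{WH} one must first replace $\varphi_{t_i}$ by translates $\gamma_i\cdot\varphi_{t_i}$ with $\gamma_i\in T^{\mathbb{C}}$, and these translates are in general \emph{no longer compatible potentials}: pulling back by $\gamma_i$ changes the connection $1$-form $\theta$ and the base metrics $\omega_a$ entering $(\ref{metriccalabidata})$. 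So your key assertion --- that the uniform bounds on $M$ ``respect the fiber structure'' and yield $\mathcal{C}^{4,\alpha}$ bounds for the $\varphi_{t_i}$ inside $\mathcal{K}(V,\omega_0)^{\mathbb{T}}$, after which Arzel\`a--Ascoli plus the Fr\'echet-closedness of $\mathcal{K}(V,\omega_0)^{\mathbb{T}}$ finishes --- fails precisely in the regime $t_i\to 1$ where the properness hypothesis is actually needed. Closedness of the subspace does dispose of the easy regime $t_i\to t_1<1$, where no translation by automorphisms is required; that is the first half of the paper's proof, and it is the only part your argument covers.

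What is missing is the paper's mechanism for recovering compatibility in the limit. After translating, the $\mathbb{T}_M$-action on $(M,J,\gamma_i^*\tilde{\omega}_{\tilde{\varphi}_{t_i}})$ is still rigid and semi-simple (Proposition \ref{extremal-vector-fields}, Remark \ref{remark-semi-simple}), so by \cite{VA2} each translated metric is again given by the generalized Calabi ansatz, but with \emph{new} data $\theta_i$, $\omega_{a,i}$ (and the same $p_a$, $c_a$, since the class is fixed). He's theorem then gives smooth convergence to an extremal metric $\tilde{\omega}_1$ of this form, and one must still show that the limit fiber metric is $(\v,\w)$-cscK for the \emph{original} weights: the paper does this by differentiating the limiting equation $(\ref{extremal-presque})$ along each factor $S_a$ to get $d_{S_a}Scal(\omega_{a,1})=0$, and then using $[\omega_{a,1}]=[\omega_a]$ to conclude $Scal(\omega_{a,1})=Scal_a$, whence $Scal_{\v}(\omega_1)=\w(m_1)$. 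None of this appears in your proposal; without it you cannot conclude that your limit is compatible, nor that it solves $(\ref{continuity-path-weithed})$ at $t=1$ with the correct right-hand side. Incidentally, the subtleties you flag (the additive normalization and the convergence of $\Pi_{\tilde{\omega}_{\varphi}}$) are not the obstruction, and the claimed monotonicity of $\mathcal{M}^{\mathbb{T}}$ in $t$ is neither established nor what the argument uses: what is used is that the existence hypothesis gives properness, which converts an energy bound along the path into a bound on the \emph{relative} distance only.
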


\begin{proof}

By hypothesis, there exists an extremal K\"ahler metric in $[\tilde{\omega}_0]$. By Theorem \ref{Chen--Cheng-existence}, the relative Mabuchi energy $\mathcal{M}^{T}$ is $d_{1,T^{\mathbb{C}}}$-proper. Let $\{\varphi_i\}_{i \in \mathbb{N}} \subset \mathcal{K}(V,\omega_0)^{\mathbb{T}}$  be a sequence of solutions of (\ref{continuity-path-weithed}) given by Proposition \ref{openess} with $t_i \rightarrow t_1 < 1$. By Corollary \ref{change-of-metric-cor}, the sequence $\{\varphi_i\}_{i \in \mathbb{N}}$ lies in $\mathcal{K}(M,\tilde{\omega}_0)^{T}$. Consequently, the same argument as in \cite[Lemma~3.3]{WH} shows the existence of a smooth limit $\varphi_{t_1} \in \mathcal{K}(M,\tilde{\omega}_0)^{T}$. Moreover, it follows from (\ref{exact-sequence}) and (\ref{identification-functio-cartesian}),  that  $\mathcal{K}(V,\omega_0)^{\mathbb{T}}$ is closed in $\mathcal{K}(M,\tilde{\omega}_0)^{T}$ for the Frechet topology. Then, the K\"ahler potential limit $\varphi_{t_1}$ belongs to $ \mathcal{K}(V,\omega)^{\mathbb{T}}$. In particular $\tilde{\omega}_{\varphi_{t_1}}$ is a compatible K\"ahler metric.

Let $\tilde{\varphi}_{t_i} \in \mathring{\mathcal{K}}(V,\omega_0)^{\mathbb{T}}$ (see $(\ref{normalized-compatible-potential})$) be the solution of $(\ref{continuity-path-weithed})$ at $t_i$ for $t_i$ increasing to $1$. By Theorem \ref{Chen--Cheng-existence}, $\mathcal{M}^{T}$ is $d_{1,T^{\mathbb{C}}}$-proper. Then, by Corollary \ref{change-of-metric-cor}, we get a bound with respect to $d_{1,T^{\mathbb{C}}}$, that is

\begin{equation*}
    \sup_{i\in \mathbb{N}}d_{1,T^{\mathbb{C}}}(0,\tilde{\varphi}_{t_i}) < \infty.
\end{equation*}

\noindent  By definition of $d_{1,T^{\mathbb{C}}}$, there exists $\gamma_i \in T^{\mathbb{C}}$ and $\varphi_{t_i} \in \mathring{\mathcal{K}}(M,\tilde{\omega}_0)^{T}$ such that $\omega_{\varphi_{t_i}}=\gamma_i^*\omega_{\tilde{\varphi}_{t_i}}$, and

\begin{equation*}
    \sup_{i\in \mathbb{N}}d_1(0,\varphi_{t_i}) < \infty.
\end{equation*}

 By definition $\gamma_i$ preserves $J$. Moreover, the form $\tilde{\omega}_{\varphi_{t_i}}$ is not compatible in general since the connection form $\theta$ and the base K\"ahler metrics $\omega_{a}$ may change by the action of $\gamma_i$. However, by Proposition \ref{extremal-vector-fields}, the $T^{\mathbb{C}}$-action commutes with the $\mathbb{T}_M$-action. Then, for each $t_i$, the $\mathbb{T}_M$-action is still rigid and semi-simple  (see Remark \ref{remark-semi-simple}). According to \cite{VA2}, $\tilde{\omega}_{\varphi_{t_i}}$ is given by the generalized Calabi ansatz, with a fixed stable quotient $S= \prod_{a=1}^k S_a$ with respect to the complexified action $\mathbb{T}_M^{\mathbb{C}}$. Thus,  there exists a connection 1-form $\theta_{Q, t_i}$ with curvature

\begin{equation*}
    d\theta_{t_i}=\sum_{a=1}^k \pi_S^*(\omega_{a,t_i}) \otimes p_{a,t_i} \text{ } \text{ } p_{a,t_i} \in \Lambda
\end{equation*}

\noindent such that $\tilde{\omega}_{\varphi_{t_i}}$ is given by

\begin{equation*}
   \tilde{\omega}_{\varphi_{t_i}}= \sum_{a=1}^k(\langle p_{a,t_i},m_{\varphi_{t_i}}\rangle +c_{a,t_i})\pi_S^*(\omega_{a,t_i}) + \langle dm_{\varphi_{t_i}} , \theta_{t_i} \rangle.
\end{equation*}

Since $\tilde{\omega}_{\varphi_{t_i}} \in [\tilde{\omega}_0]$, $c_{a,t_i}=c_a$ and $p_{a,t_i}=p_{a}$. By \cite[Theorem~3.5]{WH},  $\tilde{\omega}_{t_i}$ converge smoothly to an extremal metric $\omega_{\varphi_{1}}$. Furthermore, by Propositon \ref{extremal-vector-fields}, the extremal vector field $\xi_{ext}$ of $[\tilde{\omega}_0]$ relatif to $T$ is in the Lie algebra $\mathfrak{t}$ of $\mathbb{T}_M$. Then, by Corollary \ref{equivalence-scalar} and the smooth convergence of $\tilde{\omega}_{\varphi_{t_i}}$ to $\tilde{\omega}_{\varphi_1}$, we get

\begin{equation}{\label{extremal-presque}}
    \langle m_{\varphi_1}, \xi_{ext} \rangle + c_{ext}=\sum_{a =1}^k \frac{Scal(\omega_{a,1})}{\langle p_a,m_{\varphi_1} \rangle +c_a} + \frac{1}{\mathrm{v}(m_{\varphi_1})} Scal_{\v}(\omega_{\varphi_1}),
\end{equation}

\noindent where $\omega_{\varphi_1}$ is the K\"ahler metric on $(V,J_V)$ corresponding to $\tilde{\omega}_{\varphi_1}$. Taking the exterior differential $d_{S_a}$ on $S_a$ in (\ref{extremal-presque}) we get $d_{S_a}Scal(\omega_{a,1})=0$ for all $1\leq a \leq k$, i.e. $\omega_{a,1}$ has constant scalar curvature. Yet, $[\omega_{a,1}]=[\omega_a]$, showing that $Scal(\omega_{a,1})=Scal_a$. By definition of $\w \in \mathcal{C}^{\infty}(P,\R)$,  we get

\begin{equation*}
 Scal_{\v}(\omega_{\varphi_1})=\w(m_{\varphi_1}).
\end{equation*}

\end{proof}

\begin{corollary}
In a compatible K\"ahler class, the extremal metrics are given by the Calabi ansatz of \cite{VA2}. Equivalently,  in a compatible K\"ahler class, the extremal metrics are induced by $(\v,\w)$-cscK metrics on $(V,J_V)$ via $(\ref{metriccalabidata})$ for a suitable connection $\theta$ and suitable K\"ahler metric $\omega_a$.
\end{corollary}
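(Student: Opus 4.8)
The plan is to read off this corollary from the uniqueness of extremal K\"ahler metrics together with the structure theorem of \cite{VA2}, arguing exactly as in the closedness step above. By the equivalence $(1)\Leftrightarrow(2)$ of Theorem~\ref{theoremA}, the compatible class $[\tilde{\omega}_0]$ contains a \emph{compatible} extremal metric $\tilde{\omega}_{\mathrm{cpt}}$, which by Proposition~\ref{extremal-vector-fields} is invariant under the maximal torus $T$ fitting into the exact sequence $(\ref{exact-sequence})$ and is given explicitly by $(\ref{metriccalabidata})$. First I would observe that, since $\mathbb{T}_M \subset T$ and $T$ is abelian, every $\gamma \in T^{\mathbb{C}}$ commutes with the $\mathbb{T}_M$-action; hence $\gamma^{*}\tilde{\omega}_{\mathrm{cpt}}$ is again $\mathbb{T}_M$-invariant and the $\mathbb{T}_M$-action on it remains rigid and semi-simple in the sense of Remark~\ref{remark-semi-simple}, with the same product quotient $S=\prod_{a=1}^k S_a$ and no blow-downs.

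Next I would invoke the uniqueness of $T$-invariant extremal metrics modulo $T^{\mathbb{C}}$ recalled in the proof of Theorem~\ref{Chen--Cheng-existence} (after \cite{AL2}): any $T$-invariant extremal metric $\tilde{\omega}_{\mathrm{ext}} \in [\tilde{\omega}_0]$ is of the form $\gamma^{*}\tilde{\omega}_{\mathrm{cpt}}$ for some $\gamma \in T^{\mathbb{C}}$. Combined with the previous paragraph and the structure theorem of \cite{VA2}, this shows that $\tilde{\omega}_{\mathrm{ext}}$ is given by the generalized Calabi ansatz for a suitable connection $1$-form $\theta'$ and base K\"ahler metrics $\omega_a'$; since $[\tilde{\omega}_{\mathrm{ext}}]=[\tilde{\omega}_0]$ is fixed, the topological constants $p_a$, $c_a$ are unchanged and $[\omega_a']=[\omega_a]$. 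By Corollary~\ref{equivalence-scalar}, $\tilde{\omega}_{\mathrm{ext}}$ is then induced by a $(\v,\w)$-cscK metric on $(V,J_V)$ via $(\ref{metriccalabidata})$. A general extremal metric is invariant under some maximal torus of $\autred(M)$; using that all such maximal tori are conjugate, I would conjugate by an automorphism to reduce to the $T$-invariant case just treated, so that every extremal metric in the class is realized in Calabi-ansatz form for a semi-simple rigid torus action with the product quotient $S=\prod_a S_a$.

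The hard part will be ensuring that the hypotheses of the structure theorem of \cite{VA2}---semi-simplicity and rigidity of the $\mathbb{T}_M$-action, the global product structure of the K\"ahler quotient, and the absence of blow-downs---are genuinely preserved under the \emph{complexified} $T^{\mathbb{C}}$-action rather than merely under the compact torus. This is the same verification carried out in the closedness argument above, where the stability of the quotient $S=\prod_a S_a$ under the complexified action $\mathbb{T}_M^{\mathbb{C}}$ was used; the present proof abstracts that computation and supplies the uniqueness input from \cite{AL2} needed to cover \emph{all} extremal metrics in the class, not merely those produced along the continuity path.
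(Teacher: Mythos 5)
Your proposal is correct and follows essentially the same route as the paper's own proof: produce a compatible extremal metric via Theorem~\ref{theoremA}, reduce a general extremal metric to a $T$-invariant one by Calabi's theorem and conjugation of maximal tori, apply the uniqueness of $T$-invariant extremal metrics modulo $T^{\mathbb{C}}$ (\cite{BB,CPZ,AL2}), observe that pulling back by $\gamma\in T^{\mathbb{C}}$ preserves rigidity and semi-simplicity of the $\mathbb{T}_M$-action since $T^{\mathbb{C}}$ commutes with $\mathbb{T}_M$, and conclude by the structure theorem of \cite{VA2} together with Corollary~\ref{equivalence-scalar}. The only difference is ordering (the paper conjugates at the outset rather than at the end), which is immaterial.
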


\begin{proof}
Suppose there exists an extremal metric $\omega_{1}$ in $[\tilde{\omega}]$. By a  result of Calabi \cite{EC}, $\omega_{1}$ is  invariant by some maximal torus $T \subset \aut_{\red}(M)$. Conjugating if necessary, we can assume that $\mathbb{T}_M \subset T$.
By Theorem \ref{theoremA}, there exists a compatible extremal metric $\omega_{2}$ in $[\tilde{\omega}]$. By Lemma \ref{change-of-metric-cor}, $\omega_{2}$ is $T$-invariant. Then, by unicity of extremal K\"ahler metrics invariant by a maximal torus of the reduced automorphism group \cite{BB, CPZ, AL}, there exists $\gamma \in T^{\mathbb{C}}$ such that $ \omega_{1}=\gamma^*\omega_{2}$. Since  $\mathbb{T}_M \subset T$, the action of $\mathbb{T}$ on $(M,J, \omega_{1})$ is still rigid and semi-simple, see Remark \ref{remark-semi-simple}. Thus, according to \cite{VA2}, $\omega_{1}$ is given by the Calabi ansatz.

\end{proof}

\section{Weighted toric K-stability}{\label{section-toric}}

\subsection{Complex and symplectic points of view}{\label{subsection-complex-vs-stmplectic}}
In view of Section \ref{section-application}, where we will
consider almost K\"ahler structures on toric varieties, we briefly recall the well-known correspondence between
symplectic and K\"ahler potentials of toric K\"ahler manifolds, established and widely used over the years, notably in \cite{VA4, VA2, SKD, VG}.   We use the notation and conventions of \cite{VA1}, which differ in places from those used in  \cite{MA2, SKD}.

Let $(V,\omega_0,\mathbb{T})$ be a toric symplectic manifold classified by its \textit{labelled integral Delzant polytope} $(\PL)$ \cite{VA4, TD}, where $\textbf{L} =(L_j)_{j=1\dots k}$ is the collection of non-negative defining affine-linear functions for $P$, with $dL_j$ being primitive elements of the lattice $\Lambda$ of circle subgroups of $\mathbb{T}$. Choose a K\"ahler structure $(g,J)$ on $(V,\omega_0,\mathbb{T})$ and denote by $(m_0,t_J)$ the associated moment map, i.e. $m_0 : V \longrightarrow \mathfrak{t}^*$ is the moment map of $(V,\omega_0,\mathbb{T})$ and $t_J : V^0 \longrightarrow \mathfrak{t}/2\pi \Lambda $ is the angular coordinates (unique modulo an additive constant) depending on the complex structure $J$ (see \cite[Remark 3]{VA2}). These coordinates are symplectic, i.e. $\omega_0$ is given by (\ref{toric_symplectic}) for its respective moment-angular coordinates. The K\"ahler structure $(g,J)$ is defined on $V^0$ by a smooth strictly convex function $u$ on $P^0$ via

\begin{equation}{\label{metric-toric2}}
    g = \langle dm_0, \textbf{G}, dm_0 \rangle + \langle dt_J, \textbf{H},dt_J \rangle \text{ } \text{ and } \text{ } J dm_0= \langle \textbf{H} , dt_J \rangle,
\end{equation}

\noindent where $\textbf{G}:=\textnormal{Hess}(u)$ is a  positive definite $S^2\mathfrak{t}$-valued function and $\textbf{H}$ is $S^2\mathfrak{t}^*$-valued function on $P^0$ and  inverse of $\textbf{H}$ (when seen $\textbf{H} : \mathfrak{t} \longrightarrow \mathfrak{t}^*$ and $\textbf{G}: \mathfrak{t}^* \longrightarrow \mathfrak{t}$ in each point in $P^0$) and $\langle \cdot , \cdot , \cdot \rangle$ denote the point wise contraction $\mathfrak{t}^* \times S^2 \mathfrak{t} \times \mathfrak{t}^*$ or its dual. It is shown in \cite[Lemma 3]{VA2}, that for two $\mathbb{T}$-invariant K\"ahler structures on $(V,\omega_0,\mathbb{T})$, given explicitly on $V^0$ by (\ref{metric-toric2}) with the same matrix $\textbf{H}$, there exists a $\mathbb{T}$-equivariant K\"ahler isomorphism between them.

Conversely, smooth strictly convex functions $u$ on $P^0$ define $\mathbb{T}$-invariant $\omega_0$-compatible K\"ahler structures on $V^0$ via $(\ref{metric-toric2})$. The following Proposition established in \cite{VA2} gives a criterion for the metric to compactify.

\begin{prop}{\label{boudary}}
Let $(V,\omega, \mathbb{T})$ be a compact toric symplectic $2\ell$-manifold with momentum map $m_{\omega} : V \rightarrow P$ and $u$ be a smooth strictly convex function on $P^0$. Then the positive definite  $S^2\mathfrak{t}^{*}$-valued function $\textnormal{\textbf{H}}:=\textnormal{Hess}(u)^{-1}$ on $P^0$   comes from a $\mathbb{T}$-invariant, $\omega$-compatible  K\"ahler metric $g$ via $(\ref{metric-toric2})$ if and only if it satisfies the following conditions:

\begin{itemize}
    \item \textnormal{[smoothness]} $\textnormal{\textbf{H}}$ is the restriction to $P^0$ of a smooth $S^2\mathfrak{t}^{*}$-valued function on $P$;
 \item \textnormal{[boundary values]} for any point $y$ on the codimension one face $F_j \subset P$   with
inward normal $u_j$, we have

\begin{equation}{\label{bounday(condition-equation}}
 \textnormal{\textbf{H}}_y(u_j , \cdot ) = 0 \text{ and } (d\textnormal{\textbf{H}})_y(u_j , u_j ) = 2u_j,
\end{equation}

\noindent where the differential $d\textnormal{\textbf{H}}$ is viewed as a smooth $S^2\mathfrak{t}^*\otimes \mathfrak{t}$-valued function on $P$;
\item \textnormal{[positivity]} for any point y in the interior of a face $F \subseteq P$, $\textnormal{\textbf{H}}_y(\cdot,\cdot)$ is positive definite
when viewed as a smooth function with values in $S^2(\mathfrak{t}/\mathfrak{t}_F )^*$, where $\mathfrak{t}_F \subset \mathfrak{t}$ the vector subspace spanned by the
inward normals $u_j$ in $\mathfrak{t}$ to the codimension one faces $F$.

\end{itemize}
\end{prop}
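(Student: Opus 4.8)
The plan is to reduce Proposition \ref{boudary} to a purely local statement near each boundary stratum of $V$ and then verify it in an equivariant normal form. On the open dense set $V^0$ the correspondence between the convex potential $u$ (equivalently $\mathbf{H} = \mathrm{Hess}(u)^{-1}$) and the compatible K\"ahler structure given by (\ref{metric-toric2}) is already understood; the only issue is whether the metric $g$ (equivalently the compatible complex structure determined by $J\,dm_\omega = \langle \mathbf{H}, dt_J\rangle$) extends smoothly and nondegenerately across $V \setminus V^0$. Since $V \setminus V^0$ is stratified by the faces of $P$ and the whole question is $\mathbb{T}$-invariant and local, it suffices to fix a face $F \subseteq P$ of codimension $r$ and analyse $g$ on a $\mathbb{T}$-invariant neighbourhood of a point lying over the relative interior of $F$. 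Both implications will then be read off a single local computation: necessity by expanding a given smooth $g$ in adapted coordinates, sufficiency by using the three conditions to build the smooth extension.

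First I would treat the codimension-one case $r=1$, which carries the essential content. Let $F_j$ be a facet with defining function $L_j$ and primitive inward normal $u_j = dL_j \in \Lambda$. Using the Delzant/smoothness property of $(\PL)$ \cite{TD, VA4}, I complete $u_j$ to a basis of $\Lambda$, obtaining a splitting $\mathfrak{t} = \R u_j \oplus \mathfrak{t}'$ and dual momentum-angle coordinates $t = (t_j, t')$. In the associated equivariant normal form of \cite{VA2}, the circle subgroup generated by $u_j$ acts by rotation on a transverse $\mathbb{C}$-factor and its orbit collapses along $F_j$; I introduce the transverse holomorphic coordinate $z_j$ with $\arg z_j = t_j$ and $|z_j|^2$ proportional to $L_j$. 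Rewriting (\ref{metric-toric2}) in $z_j$ and the remaining coordinates, smoothness of $g$ across $\{z_j = 0\}$ becomes equivalent to three requirements: $\mathbf{H}$ extends smoothly up to $F_j$ (smoothness); the mixed and collapsing terms disappear, i.e. $\mathbf{H}_y(u_j, \cdot) = 0$ on $F_j$ (first boundary equation); and the polar change of variables is smooth and free of conical defect, which forces the normal derivative $\mathbf{H}(u_j, u_j) = 2L_j + O(L_j^2)$, i.e. $(d\mathbf{H})_y(u_j, u_j) = 2u_j$ (second boundary equation). Nondegeneracy of $g$ transverse to the collapsing orbit is exactly positivity of $\mathbf{H}_y$ on $S^2(\mathfrak{t}/\R u_j)^*$ (positivity with $\mathfrak{t}_F = \R u_j$).

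Next I would handle a face $F$ of codimension $r > 1$, whose inward normals $u_{j_1}, \dots, u_{j_r}$ span $\mathfrak{t}_F$ and, by the Delzant condition, form part of a lattice basis. I split off $r$ transverse holomorphic coordinates $z_{j_1}, \dots, z_{j_r}$ and apply the facet computation simultaneously in each normal direction; the boundary conditions imposed by the facets containing $F$ then assemble into a smooth extension across the codimension-$r$ stratum, while positivity on $S^2(\mathfrak{t}/\mathfrak{t}_F)^*$ guarantees the metric stays positive definite in the orbit directions that do not collapse. The vertices ($r = \ell$) are the fixed points, where the model is $\mathbb{C}^\ell$ with the standard torus action and the computation reduces to the classical one. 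Running the local computation in one direction gives necessity of the three conditions and in the other direction gives sufficiency, completing the equivalence.

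The main obstacle is the core facet computation, and in particular controlling the inversion $\mathbf{G} = \mathbf{H}^{-1}$ as the $u_j$-block of $\mathbf{H}$ degenerates to zero along $F_j$. One must show, via a block-matrix (Schur-complement) argument, that although $\mathbf{H}(u_j, u_j) \to 0$ the induced metric in the non-collapsing directions has a finite positive limit --- this is precisely where positivity on $\mathfrak{t}/\mathfrak{t}_F$ is used --- and that the transverse term $\langle dm_\omega, \mathbf{G}, dm_\omega\rangle$ reorganises, together with $\langle dt, \mathbf{H}, dt\rangle$, into a tensor that is smooth in $z_j, \bar z_j$ at the origin. Pinning down the constant $2$ in $(d\mathbf{H})_y(u_j, u_j) = 2u_j$ is the delicate point: it is the toric analogue of the fact that $dr^2 + r^2\,d\theta^2$ is the smooth flat metric only when the angular period is $2\pi$, so any other normal derivative would produce a conical singularity along $F_j$ and obstruct smoothness.
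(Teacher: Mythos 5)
The paper does not actually prove Proposition \ref{boudary}: it is recalled from \cite{VA2} (``The following Proposition established in \cite{VA2} gives a criterion for the metric to compactify''), so there is no internal argument to compare yours against. Your outline does reconstruct the route of the proof in the cited reference (going back to Guillemin, Abreu and ACGT): reduce to a $\mathbb{T}$-invariant local statement over the relative interior of each face, use the Delzant condition to complete the inward normals to a lattice basis, pass to a transverse holomorphic coordinate $z_j$ with $|z_j|^2\sim L_j$, identify the first boundary condition $\textnormal{\textbf{H}}_y(u_j,\cdot)=0$ with the vanishing of the collapsing block, the factor $2$ in $(d\textnormal{\textbf{H}})_y(u_j,u_j)=2u_j$ with the absence of a cone singularity (the toric analogue of $dr^2+r^2\,d\theta^2$ being smooth only for angular period $2\pi$), and the positivity condition with nondegeneracy transverse to the collapsed orbits; the higher-codimension faces and the vertices are then handled by running the facet computation simultaneously in the normal directions, with model $\mathbb{C}^\ell$ at the fixed points. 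These are the correct ingredients, correctly assembled.

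That said, what you have written is a plan rather than a proof: the entire mathematical content of the proposition lives in the ``core facet computation'' that you yourself single out as the main obstacle and then do not perform. In the sufficiency direction you must exhibit the coefficients of $g$ in the coordinates adapted to $\{z_j=0\}$ as smooth functions of $(z_j,\bar z_j)$ and the remaining variables, which requires carrying out the Schur-complement control of $\textbf{G}=\textnormal{\textbf{H}}^{-1}$ as $\textnormal{\textbf{H}}(u_j,u_j)\to 0$ and verifying that the resulting expressions depend smoothly on $|z_j|^2$ (not merely on $L_j$ and $t_j$ separately); writing that the terms ``reorganise into a tensor that is smooth'' asserts exactly the conclusion to be proven. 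In the necessity direction you must expand a given smooth compatible metric near the stratum and show that no boundary normalization other than $(d\textnormal{\textbf{H}})_y(u_j,u_j)=2u_j$ is possible, again a computation that is named but not done. So: right strategy, matching the proof in \cite{VA2} that the paper invokes, but the decisive step is missing; as it stands the argument would need either that computation written out in full or, as the paper does, an appeal to the reference.
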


\begin{define}
Let $\mathcal{S}(\PL)$ be the space of smooth strictly convex functions on the interior of $P^0$ such that $\textnormal{\textbf{H}} =\textnormal{Hess}(u)^{-1}$ satisfies the conditions of Proposition $\ref{boudary}$.
\end{define}

\begin{remark}
In the above Proposition and Definition, we use as a model metric the Guillemin K\"ahler metric $(g_0, J_0)$  \cite{VG}, given by $(\ref{metric-toric2})$ for the symplectic potential \[ u_0 := \frac{1}{2} \sum_{j=1}^d L_j \log L_j,\] where $L_j, j=1, \ldots, d$ are the affine-linear functions defining the polytope. This introduces a discrepancy of a factor $1/2$ with respect to the normalization used in \cite{SKD}, which in turn will result in some obvious modification of the formula for the (weighted) Futaki invariant in Section $\ref{subsection-Futaki}$.
\end{remark}

Thus, there exists a bijection between $\mathbb{T}$-equivariant isometry classes of $\mathbb{T}$-invariant $\omega_0$-compatible K\"ahler structures and smooth functions $\textbf{H}= \textnormal{Hess}(u)^{-1}$, where $u \in \mathcal{S}(\PL)$.

 We fix the Guillemin K\"ahler structure $J_0$ on $(V,\omega_0,\mathbb{T})$. Consider another $\omega_0$-compatible K\"ahler structure $J_u$ defined by a symplectic potential $u \in S(\PL)$ via (\ref{metric-toric2}). Donaldson shows that \cite{SKD2} there is a biholomorphism  $\Phi_u : (V,J_u) \cong (V,J_0)$.  Let $\omega_u := \Phi_u^*(\omega_0)$ and $\phi_u$ and $\phi_{u_0}$ be the Legendre transforms of $u$ and $u_0$, respectively. By \cite{VG}, we have that

\begin{equation}{\label{relation-potential0}}
     \omega_u = \omega_0 + dd^c_{J_0} \varphi_u, \qquad \varphi_u(y_0):= \phi_u(y_0)- \phi_{u_0}(y_0),
\end{equation} 

\noindent where $y_0 = \nabla u_0$ are the pluriharmonic coordinates with respect to $J_0$.

Conversely, using the dual Legendre transform, any $\mathbb{T}$-invariant K\"ahler potential $\varphi \in \mathcal{K}(V,\omega_0)^{\mathbb{T}}$ gives rise to a symplectic potential $u \in \mathcal{S}(\PL)$   through $(\ref{relation-potential0})$.  The key point of this correspondence is that, as show in \cite{DG}, the path $\varphi_{u_t} \in \mathcal{K}(V,\omega_0)^{\mathbb{T}}$ corresponding to a path $u_t \in \mathcal{S}(\PL)$ satisfies

\begin{equation}{\label{relation-potentials}}
    \frac{d}{dt}u_t=-\frac{d}{dt}\varphi_{t}.
\end{equation}

\subsection{Generalized Abreu's equation}

Thanks to Abreu \cite{MA2}, the scalar curvature $Scal(u)$ associated to a symplectic potential $u\in \mathcal{S}(\PL)$ is expressed by

\begin{equation}{\label{abreu}}
    Scal(u)=\sum^{\ell}_{i,j=1}-(H^u_{ij})_{,ij},
\end{equation}

\noindent where the partial derivatives and the inverse Hessian $(H_{ij}^u)=\textnormal{Hess}(u)^{-1}$ of $u$  is taken in a fixed basis $\boldsymbol{\xi}^*$ of $\mathfrak{t}^*$. 

From \cite[Sect.~6]{AL} and the computation of \cite[Sect. 3]{VA5}, the $\v$-scalar curvature associated to a symplectic potential $u\in \mathcal{S}(\PL)$ and a positive weight function $\v$ is given by

\begin{equation}{\label{v-scalar-curv-toric}}
  Scal_{\mathrm{v}}(u)= - \sum_{i,j=1}^{\ell}(\v H^u_{ij})_{,ij}.
\end{equation}

Let  $\v \in \mathcal{C}^{\infty}(P,\R_{>0})$ and  $\w \in \mathcal{C}^{\infty}(P,\R)$. According to Definition \ref{define-scalv}, a K\"ahler structure $(J_u,g_u)$ on $(V,\omega_0,\mathbb{T})$, associated with a symplectic potential $u \in \mathcal{S}(\PL)$, is $(\v,\w)$-cscK if and only if it satisfies

\begin{equation}{\label{equation-Abreu}}
    - \sum_{i,j=1}^{\ell}(\v H^u_{ij})_{,ij}=\w.
\end{equation}

This formula generalizes the expression $(\ref{abreu})$ and is referred to as \textit{the generalized Abreu equation}. This equation has been studied for example in \cite{VA3, LLS, LLS2, LLS3}.

\subsection{Weighted Donaldson--Futaki invariant}{\label{subsection-Futaki}}

Following \cite{SKD, AL, LLS}, for $\v \in \mathcal{C}^{\infty}(P,\R_{>0})$ and  $\w \in \mathcal{C}^{\infty}(P,\R)$, we introduce the $(\v,\w)$-Donaldson--Futaki invariant

\begin{equation}{\label{define-futaki}}
  \mathcal{F}_{\v,\w}(f):=  2\int_{\partial P} f\v d\sigma -  \int_P  f  \w  dx,
\end{equation}

\noindent for all continuous functions $f$ on $P$, where $d\sigma$ is the induced measure on each face $F_i \subset \partial P$ by letting $dL_i \wedge d \sigma = -dx$, where $dx$ is the Lesbegue measure on $P$.

\begin{convention}
The weights $\v>0$ and $\w \in C^{\infty}(P,\R)$ satisfy

\begin{equation}{\label{annulation-Futaki}}
    \mathcal{F}_{\v,\w}(f)=0 
\end{equation}
\noindent for all $f$ affine-linear on $P$.
\end{convention}

Integration by parts (see e.g. \cite{SKD}) reveals that ({\ref{annulation-Futaki}) is a necessary condition for the existence of $(\v,\w)$-cscK metric on $(V,\omega_0,\mathbb{T})$.

\begin{remark}
Notice that in the case of semi-simple principal toric fibrations, the weights given by $(\ref{weights})$ satisfy $(\ref{annulation-Futaki})$ above.
\end{remark}
 
 \subsection{The weighted  Mabuchi energy}

 The volume form $\omega_0^{[\ell]}$ on $V$ is pushed forward to the measure $dx$ via the moment map $m_{0}$. Seen as functional on $\mathcal{S}(\PL)$ via (\ref{relation-potentials}), the weighted Mabuchi energy $\mathcal{M}_{\v,\w}$ satisfies

\begin{equation*}
	d_{u}\mathcal{M}_{\mathrm{v},\mathrm{w} }(\Dot{u}) =  \int_P\bigg( - \sum^{\ell}_{i,j=1}\big(\v H^u_{ij}\big)_{,ij}  - \w \bigg)\Dot{u}dx.
\end{equation*}

\noindent  From  \cite[Lemma 6]{VA5} (see also Lemma \ref{IPP-lemma} below) we get

\begin{equation*}
    d_{u}\mathcal{M}_{\mathrm{v},\mathrm{w} }(\Dot{u})= \mathcal{F}_{\v,\w}(\dot{u}) -  \int_P \sum_{i,j=1}^{\ell} \v H_{ij}^u \dot{u}_{,ij}dx,
\end{equation*}

\noindent where $\mathcal{F}_{\v,\w}$ is the Donaldson--Futaki invariant defined in (\ref{define-futaki}). Using that the derivative of $\textnormal{tr}\textbf{H}^{-1}d\textbf{H}$ is  $d \log \det \textbf{H}$, we get

\begin{equation*}
    \mathcal{M}_{\v,\w}(u)=\mathcal{F}_{\v,\w}(u)- \int_P \log \det \textnormal{Hess}(u)\textnormal{Hess}(u_0)^{-1} \v dx.
\end{equation*}

We denote by $\mathcal{CV}^{\infty}(P)$ the set of continuous convex functions on $P$ which are smooth in the interior $P^0$. Using the same argument than  \cite[Lemma 3.3.5]{SKD}, since $\v$ is smooth and $P$ compact, we get:

 \begin{lemma}{\label{IPP-lemma}}
Let $\textnormal{\textbf{H}}$ be any smooth $S^2\mathfrak{t}^*$-valued function on $P$ which satisfies the boundary condition $(\ref{bounday(condition-equation})$ of Proposition \ref{boudary}, but not necessarily the positivity
condition. For any $\v \in \mathcal{C}^{\infty}(P,\R_{>0})$ and $f \in \mathcal{CV}^{\infty}(P)$:

\begin{equation}\label{equation-IPP}
  \int_P \sum_{i,j=1}^{\ell}\big(\v H_{ij}\big)f_{,ij} dx =  \int_P \bigg(\sum_{i,j=1}^{\ell}\big(\v H_{ij}\big)_{,ij}\bigg)f dx + 2 \int_{\partial P} f \v d\sigma.
\end{equation}

\noindent In particular, $\int_P \sum_{i,j=1}^{\ell}\big(\v H_{ij}\big)f_{,ij}dx <  \infty$.

\end{lemma}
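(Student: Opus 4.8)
The plan is to deduce the identity from Donaldson's integration-by-parts formula \cite[Lemma 3.3.5]{SKD} applied to the weighted tensor $\v\textnormal{\textbf{H}}$ in place of $\textnormal{\textbf{H}}$. The key preliminary observation is that $\v\textnormal{\textbf{H}}$ is again a smooth $S^2\mathfrak{t}^*$-valued function on $P$ satisfying the boundary conditions of Proposition \ref{boudary}, the only change being that the datum in the second condition is rescaled by the weight. Indeed, on the codimension-one face $F_j$ with inward normal $u_j$ one has $(\v\textnormal{\textbf{H}})_y(u_j,\cdot)=\v(y)\textnormal{\textbf{H}}_y(u_j,\cdot)=0$, while differentiating $\v\textnormal{\textbf{H}}(u_j,u_j)$ and using the Leibniz rule gives
\begin{equation*}
(d(\v\textnormal{\textbf{H}}))_y(u_j,u_j)=\v(y)(d\textnormal{\textbf{H}})_y(u_j,u_j)+\textnormal{\textbf{H}}_y(u_j,u_j)\,d\v_y=2\v(y)u_j,
\end{equation*}
since the scalar $\textnormal{\textbf{H}}_y(u_j,u_j)$ vanishes on $F_j$ by the first condition. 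Thus the boundary value $2u_j$ in $(\ref{bounday(condition-equation})$ becomes $2\v u_j$, which is exactly what will produce the weighted boundary integral $2\int_{\partial P}f\v\,d\sigma$.

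With this in hand I would first prove the formula for $f$ smooth up to $\partial P$, by integrating by parts twice in the interior. The first integration by parts yields a boundary term of the type $\int_{\partial P}\sum_{ij}(\v H_{ij})f_{,i}n_j\,dS$; on each face $F_j$ the conormal is proportional to $u_j$, so this term vanishes by the first boundary condition $(\v\textnormal{\textbf{H}})(u_j,\cdot)=0$. The second integration by parts produces $\int_P(\sum_{ij}(\v H_{ij})_{,ij})f\,dx$ together with a boundary term which, using $(d(\v\textnormal{\textbf{H}}))(u_j,u_j)=2\v u_j$ and the normalization $dL_j\wedge d\sigma=-dx$ of the facet measure, evaluates to $2\int_{\partial P}f\v\,d\sigma$. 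This is verbatim Donaldson's computation with $\textnormal{\textbf{H}}$ replaced by $\v\textnormal{\textbf{H}}$; smoothness of $\v$ and compactness of $P$ guarantee that all interior integrands are bounded.

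It then remains to pass from smooth $f$ to a general $f\in\mathcal{CV}^{\infty}(P)$, and this is where the main difficulty lies. I would exhaust $P$ by the interior regions $P_\varepsilon$ on which all the $L_j$ exceed $\varepsilon$, apply the smooth identity on $P_\varepsilon$, and let $\varepsilon\to0$. Since $f$ is continuous on the compact set $P$ and $\sum_{ij}(\v H_{ij})_{,ij}$ is bounded, the volume integral and the facet integral converge to their counterparts over $P$ and $\partial P$. The delicate point is the gradient boundary term $\int_{\partial P_\varepsilon}\sum_{ij}(\v H_{ij})f_{,i}n_j\,dS$, in which the derivatives $f_{,i}$ of the merely convex function $f$ may be unbounded as $\varepsilon\to0$. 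This is precisely controlled by the vanishing $(\v\textnormal{\textbf{H}})(u_j,\cdot)=0$, which forces the normal component $\sum_j(\v H_{ij})n_j$ to be $O(L_j)$ near $F_j$; combined with the monotonicity of the directional derivatives of the convex $f$ this term tends to $0$, exactly as in \cite[Lemma 3.3.5]{SKD}. Carrying out this boundary estimate for non-smooth convex $f$ is the crux of the argument.

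Finally, once the identity is established the right-hand side is manifestly finite, because $f$ is continuous on the compact $P$, the function $\sum_{ij}(\v H_{ij})_{,ij}$ is smooth and hence bounded, and $f\v$ is continuous on the compact boundary $\partial P$. The same boundary vanishing of $\v\textnormal{\textbf{H}}$ that was used above dominates the possibly singular normal second derivatives of $f$ and makes the left-hand integrand absolutely integrable; hence $\int_P\sum_{ij}(\v H_{ij})f_{,ij}\,dx$ is finite, which is the last assertion of the lemma.
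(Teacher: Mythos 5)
Your proposal is correct and takes essentially the same route as the paper, whose proof consists precisely in invoking Donaldson's integration-by-parts argument \cite[Lemma 3.3.5]{SKD} adapted to the weighted setting, using that $\v$ is smooth and $P$ compact. Your explicit check that $\v\textbf{H}$ is smooth on $P$ and satisfies the boundary conditions of Proposition \ref{boudary} with $2u_j$ replaced by $2\v u_j$ (which is what produces the weighted boundary term $2\int_{\partial P} f \v\, d\sigma$), together with deferring the delicate limiting estimate on the gradient boundary term to Donaldson's original argument, is exactly the content the paper leaves implicit.
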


\noindent The following result and proof are generalizations of \cite[Proposition 3.3.4]{SKD}.

\begin{prop}{\label{extension}}
Let $\v \in \mathcal{C}^{\infty}(P,\R_{>0})$ and $\w\in \mathcal{C}^{\infty}(P,\R)$. The Mabuchi energy $\mathcal{M}_{\v,\w}$ extends to the set $\mathcal{CV}^{\infty}(P)$ as functional with values in $(- \infty, + \infty]$. Moreover, if there exists $u\in \mathcal{S}(\PL)$ corresponding to a $(\v,\w)$-cscK metric, i.e. which satisfies $(\ref{equation-Abreu})$, then $u$ realizes the minimum of $\mathcal{M}_{\v,\w}$ on $\mathcal{CV}^{\infty}(P)$.
\end{prop}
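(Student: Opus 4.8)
The plan is to work directly with the expression of the weighted Mabuchi energy on symplectic potentials,
\[
\mathcal{M}_{\v,\w}(u)=\mathcal{F}_{\v,\w}(u)-\int_P \log\det\big(\textnormal{Hess}(u)\,\textnormal{Hess}(u_0)^{-1}\big)\,\v\,dx ,
\]
and to define the extension to $u\in\mathcal{CV}^{\infty}(P)$ by this same formula. The linear part $\mathcal{F}_{\v,\w}$ of $(\ref{define-futaki})$ is finite on $\mathcal{CV}^{\infty}(P)$: since $P$ is compact every convex function on $P$ is bounded, while $\v,\w$ are smooth and $d\sigma$ is a finite measure on $\partial P$. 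So the only issue is to make sense of the nonlinear term with values in $[-\infty,+\infty)$; equivalently, I must show that the \emph{positive} part of $\log\det\big(\textnormal{Hess}(u)\textnormal{Hess}(u_0)^{-1}\big)$ is $\v$-integrable, so that $\mathcal{M}_{\v,\w}(u)>-\infty$, the negative part being allowed to be non-integrable (this is what produces the value $+\infty$ when $\textnormal{Hess}(u)$ degenerates).

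First I would linearise the logarithm. Since $u$ is convex and $u_0\in\mathcal{S}(\PL)$, the matrix $A:=\textnormal{Hess}(u)\,\textnormal{Hess}(u_0)^{-1}$ is pointwise positive definite on $P^0$, so the arithmetic--geometric mean inequality $\det A\le(\ell^{-1}\textnormal{tr}\,A)^{\ell}$ together with $\log^{+}s\le s$ gives
\[
\log^{+}\det A\ \le\ \textnormal{tr}\,A\ =\ \sum_{i,j=1}^{\ell}H^{u_0}_{ij}\,u_{,ij},
\]
the right-hand side being nonnegative because $\textnormal{Hess}(u_0)^{-1}\succ0$ and $\textnormal{Hess}(u)\succeq0$. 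Integrating against $\v\,dx$ and applying Lemma \ref{IPP-lemma} with $\textnormal{\textbf{H}}=\textnormal{Hess}(u_0)^{-1}$ (which satisfies the boundary conditions of Proposition \ref{boudary} precisely because $u_0\in\mathcal{S}(\PL)$) and $f=u\in\mathcal{CV}^{\infty}(P)$, I obtain
\[
\int_P \log^{+}\det A\;\v\,dx\ \le\ \int_P \sum_{i,j=1}^{\ell}(\v H^{u_0}_{ij})\,u_{,ij}\,dx\ =\ \int_P\Big(\sum_{i,j=1}^{\ell}(\v H^{u_0}_{ij})_{,ij}\Big)u\,dx+2\int_{\partial P} u\,\v\,d\sigma<\infty,
\]
since $\v\,\textnormal{Hess}(u_0)^{-1}$ is smooth on the compact $P$ and $u$ is bounded. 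Hence $\mathcal{M}_{\v,\w}$ extends to $\mathcal{CV}^{\infty}(P)$ with values in $(-\infty,+\infty]$. This is the step I expect to be the main obstacle: a general $u\in\mathcal{CV}^{\infty}(P)$ has a Hessian that may blow up at $\partial P$, and it is exactly the combination ``AM--GM to linearise, then integrate by parts against the fixed reference $\textnormal{Hess}(u_0)^{-1}$'' that tames this boundary behaviour; here the generality of Lemma \ref{IPP-lemma} (arbitrary $f\in\mathcal{CV}^{\infty}(P)$, no positivity of $\textnormal{\textbf{H}}$ needed beyond the boundary conditions) is essential, as $u$ need not itself lie in $\mathcal{S}(\PL)$.

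For the minimising property the key structural fact is convexity of $\mathcal{M}_{\v,\w}$ along affine segments of potentials: $\mathcal{F}_{\v,\w}$ is affine, $\textnormal{Hess}(u)$ depends affinely on $u$, $\v>0$, and $A\mapsto-\log\det A$ is convex on positive definite symmetric matrices. Thus, given a solution $u^{*}\in\mathcal{S}(\PL)$ of the generalized Abreu equation $(\ref{equation-Abreu})$ and any $u\in\mathcal{CV}^{\infty}(P)$, the function $h(t):=\mathcal{M}_{\v,\w}\big((1-t)u^{*}+tu\big)$ is convex on $[0,1]$ and finite at $t=0$.

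It then remains to check that $h'(0^{+})=0$, for convexity would then give $\mathcal{M}_{\v,\w}(u)=h(1)\ge h(0)+h'(0^{+})=\mathcal{M}_{\v,\w}(u^{*})$, which is the claim. The right derivative $h'(0^{+})$ equals the first variation of $\mathcal{M}_{\v,\w}$ at $u^{*}$ in the direction $u-u^{*}$; differentiating the $\log\det$ term under the integral—legitimate because, by convexity of $-\log\det$, the difference quotients are monotone in $t$—and using Lemma \ref{IPP-lemma} with $\textnormal{\textbf{H}}=\textnormal{Hess}(u^{*})^{-1}$ (admissible since $u^{*}\in\mathcal{S}(\PL)$) to cancel the boundary contributions of $\mathcal{F}_{\v,\w}$ against those produced by the integration by parts, one recovers the variational formula
\[
h'(0^{+})=d_{u^{*}}\mathcal{M}_{\v,\w}(u-u^{*})=-\int_P\big(Scal_{\v}(u^{*})-\w\big)(u-u^{*})\,dx,
\]
where $Scal_{\v}(u^{*})=-\sum_{i,j}(\v H^{u^{*}}_{ij})_{,ij}$ by $(\ref{v-scalar-curv-toric})$. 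Since $u^{*}$ solves $(\ref{equation-Abreu})$, i.e. $Scal_{\v}(u^{*})=\w$, the integrand vanishes and $h'(0^{+})=0$. Therefore $u^{*}$ realises the minimum of $\mathcal{M}_{\v,\w}$ on $\mathcal{CV}^{\infty}(P)$, which completes the plan.
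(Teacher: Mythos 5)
Your proposal is correct and follows essentially the same route as the paper: both express $\mathcal{M}_{\v,\w}$ via the $\log\det$ formula, control the nonlinear term by a trace term whose integrability is exactly what Lemma \ref{IPP-lemma} provides (you via AM--GM and $\log^{+}s\le s$ at the identity, the paper via the tangent-line inequality for $-\log\det$ at a reference potential in $\mathcal{S}(\PL)$ --- the same inequality in disguise), and both then deduce the minimizing property from convexity of $\mathcal{M}_{\v,\w}$ along affine segments together with the vanishing of the first variation at a solution of the generalized Abreu equation $(\ref{equation-Abreu})$. If anything, your write-up is slightly tighter at two points where the paper is loose: you take the segment $(1-t)u^{*}+tu$ joining the solution to the competitor (the paper writes $g(t)=\mathcal{M}_{\v,\w}(u+tf)$, which as stated compares $u$ with $u+f$ rather than with $f$), and your positive-part estimate makes the value $+\infty$ appear automatically when $\textnormal{Hess}(u)$ degenerates, instead of being imposed by convention.
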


\begin{proof}
The linear term $\mathcal{F}_{\v,\w}$ is well-defined on $\mathcal{CV}^{\infty}(P)$. We then focus on the nonlinear term of $\mathcal{M}_{\v,\w}$. Let $u\in \mathcal{S}(\PL)$ and $h\in \mathcal{CV}^{\infty}(P)$. Suppose $\det \text{Hess}(h)\neq 0$. By convexity of the functional $-\log \det$  on the space of positive definite matrices, we get:

\begin{equation*}
        - \log \det \text{Hess}(h)+ \log \det \text{Hess}(u) \geq - \text{Tr}\big(\text{Hess}(u)^{-1}(\text{Hess}(f)\big),
\end{equation*}

\noindent where $f=h-u$. Turning this around and multiplying by $\v$, we obtain:

\begin{equation*}
         \v \log \det \text{Hess}(h) \leq \v \log \det \text{Hess}(u) +  \v \text{Tr}\big(\text{Hess}(u)^{-1}(\text{Hess}(f)\big).
\end{equation*}

\noindent By linearity of (\ref{equation-IPP}),  the equality still holds when we replace $f$ by a difference of two functions in $\mathcal{CV}^{\infty}(P)$. In particular, this shows that $ \v \text{Tr}\big(\text{Hess}^{-1}(u)(\text{Hess}(f)\big)$ is integrable on $P$ and hence, by the previous inequality, $\v \log \det \text{Hess}(h)$ is integrable too. Now if the determinant of $h$ is equal to $0$, we define the value of $\mathcal{M}_{\v,\w}(h)$ to be $+\infty$. Then $\mathcal{M}_{\v,\w}$ is well-defined on $\mathcal{CV}^{\infty}(P)$. Suppose $u$ satisfies (\ref{equation-Abreu}). If $\det\textnormal{Hess}(f)=0$, then we trivially get $\mathcal{M}_{\v,\w}(u) \leq \mathcal{M}_{\v,\w}(f)$. Now, admit $\det\textnormal{Hess}(f)\neq 0$ and consider the function $g(t)=\mathcal{M}_{\v,\w}(u+tf)$. The function $g$ is therefore a convex function. Moreover, $g$ is differentiable at $t=0$ with

\begin{equation*}
g'(0)=-\bigintsss_P\left( \sum^{\ell}_{i,j=1}(\v H^u_{ij})_{,ij}  - \w\right)fdx,
\end{equation*}

\noindent which is equal to $0$ by hypothesis on $u$. Then $\mathcal{M}_{\v,\w}(u) \leq \mathcal{M}_{\v,\w}(f) $ by the convexity of $g$.

\end{proof}

\subsection{Properness and $(\v,\w)$-uniform K-Stability }{\label{subsection-K-stability-and-proper}}

\noindent Following \cite{SKD, GS} (see also \cite[Chapter 3.6]{VA1}), we fix $x_0 \in P^0$ and consider the following  normalization
 
 \begin{equation}{\label{normalized-function-polytope}}
     \mathcal{CV}^{\infty}_*(P):=\{ f \in \mathcal{CV}^{\infty}(P) \text{ } | \text{ } f(x) \geq f(x_0)=0 \}.
 \end{equation}
 
\noindent Then, any $f\in \mathcal{CV}^{\infty}(P)$ can
be written uniquely as $f = f^*+f_0$, where $f_0$ is affine-linear and $f^*=\pi(f) \in \mathcal{CV}_*^{\infty}(P)$, where $\pi : \mathcal{CV}^{\infty}(P) \longrightarrow \mathcal{CV}^{\infty}_*(P) $ is the linear projection.

\begin{define}{\label{uniform-K-stable}}
A Delzant polytope $(\PL)$ is  $(\v,\w)$-uniformly K-stable if there exists $\lambda > 0$ such that

\begin{eqnarray}{\label{uniform-equation}}
\mathcal{F}_{\v,\w}(f) \geq \lambda \| f^*\|_{1}
\end{eqnarray}

\noindent for all  $f \in \mathcal{CV}^{\infty}(P)$, where $\| \cdot \|_{1}$ denotes the $L^1$-norm on $P$.

\end{define}

\begin{prop}{\label{stable-equivaut-energy-propr-v}}
Suppose $(\PL)$ is $(\v,\w)$-uniformly K-stable. Then there exists  $C>0$ and $D \in \R $ such that
    
\begin{equation*}
    \mathcal{M}_{\v,\w}(u) \geq C \|u^*\|_{1} + D
\end{equation*}

\noindent for all $u \in \mathcal{S}(\PL)$.
\end{prop}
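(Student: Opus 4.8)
The plan is to work entirely on the polytope side and to exploit the explicit formula for the weighted Mabuchi energy recorded just above Lemma \ref{IPP-lemma},
\[
\mathcal{M}_{\v,\w}(u)=\mathcal{F}_{\v,\w}(u)-\int_P \log\det\big(\mathrm{Hess}(u)\,\mathrm{Hess}(u_0)^{-1}\big)\,\v\,dx,
\]
showing that the linear Donaldson--Futaki term dominates the nonlinear entropy term. First I would reduce to normalized potentials. The entropy term depends only on $\mathrm{Hess}(u)$, which is unchanged under adding an affine-linear function to $u$, while $\mathcal{F}_{\v,\w}$ vanishes on affine-linear functions by the normalization (\ref{annulation-Futaki}); hence $\mathcal{M}_{\v,\w}(u)=\mathcal{M}_{\v,\w}(u^*)$, where $u^*=\pi(u)$ has the same Hessian and so again lies in $\mathcal{S}(\PL)$. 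Since $(u^*)^*=u^*$, it suffices to prove the estimate for $u=u^*\in\mathcal{CV}^\infty_*(P)\cap\mathcal{S}(\PL)$, i.e. for $u\geq 0$ with $u(x_0)=0$; for such $u$, Definition \ref{uniform-K-stable} gives $\mathcal{F}_{\v,\w}(u)\geq\lambda\|u\|_1$.

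The heart of the argument is a \emph{logarithmic}, hence sublinear, bound on the entropy term in terms of $\mathcal{F}_{\v,\w}(u)$. Writing $(H^{u_0}_{ij})=\mathrm{Hess}(u_0)^{-1}$, the matrix $\mathrm{Hess}(u)\,\mathrm{Hess}(u_0)^{-1}$ is conjugate to a symmetric positive-definite matrix, so its eigenvalues are positive and the arithmetic--geometric mean inequality gives the pointwise bound $\log\det\big(\mathrm{Hess}(u)\,\mathrm{Hess}(u_0)^{-1}\big)\leq \ell\log\big(\tfrac1\ell\,\mathrm{tr}(\mathrm{Hess}(u)\,\mathrm{Hess}(u_0)^{-1})\big)$. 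Applying Jensen's inequality with the probability measure $\v\,dx/\mathrm{Vol}_\v(P)$, where $\mathrm{Vol}_\v(P)=\int_P\v\,dx$, I would obtain
\[
\int_P \log\det\big(\mathrm{Hess}(u)\,\mathrm{Hess}(u_0)^{-1}\big)\,\v\,dx\;\leq\;\ell\,\mathrm{Vol}_\v(P)\,\log\!\left(\frac{1}{\ell\,\mathrm{Vol}_\v(P)}\int_P\sum_{i,j=1}^{\ell}H^{u_0}_{ij}\,u_{,ij}\,\v\,dx\right).
\]
The linear functional inside the logarithm is then identified by Lemma \ref{IPP-lemma}, applied with $\textnormal{\textbf{H}}=\mathrm{Hess}(u_0)^{-1}$ (which satisfies the boundary conditions because $u_0\in\mathcal{S}(\PL)$) and $f=u$, together with (\ref{v-scalar-curv-toric}) in the form $\sum_{i,j}(\v H^{u_0}_{ij})_{,ij}=-Scal_\v(u_0)$: this yields
\[
\int_P\sum_{i,j=1}^{\ell}H^{u_0}_{ij}\,u_{,ij}\,\v\,dx=2\int_{\partial P}u\,\v\,d\sigma-\int_P Scal_\v(u_0)\,u\,dx,
\]
a quantity which is strictly positive unless $u$ is affine-linear.

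The final step is to absorb this quantity into $\mathcal{F}_{\v,\w}(u)$. Using $u\geq 0$ and the definition (\ref{define-futaki}), one has $2\int_{\partial P}u\,\v\,d\sigma=\mathcal{F}_{\v,\w}(u)-\int_P\w\,u\,dx\leq\mathcal{F}_{\v,\w}(u)+\|\w\|_{L^\infty(P)}\|u\|_1$ and $-\int_P Scal_\v(u_0)\,u\,dx\leq\|Scal_\v(u_0)\|_{L^\infty(P)}\|u\|_1$; combined with $\|u\|_1\leq\lambda^{-1}\mathcal{F}_{\v,\w}(u)$ from uniform stability, this produces a constant $C_1>0$ with $\int_P\sum_{ij}H^{u_0}_{ij}u_{,ij}\v\,dx\leq C_1\,\mathcal{F}_{\v,\w}(u)$. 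Collecting the estimates gives
\[
\mathcal{M}_{\v,\w}(u)\;\geq\;\mathcal{F}_{\v,\w}(u)-\ell\,\mathrm{Vol}_\v(P)\,\log\!\left(\frac{C_1\,\mathcal{F}_{\v,\w}(u)}{\ell\,\mathrm{Vol}_\v(P)}\right).
\]
Since the function $s\mapsto s-\ell\,\mathrm{Vol}_\v(P)\log\!\big(C_1 s/(\ell\,\mathrm{Vol}_\v(P))\big)$ tends to $+\infty$ both as $s\to 0^+$ and as $s\to\infty$, it is bounded below by $\tfrac12 s+D$ for some $D\in\R$ and all $s>0$; evaluating at $s=\mathcal{F}_{\v,\w}(u)\geq\lambda\|u\|_1$ yields $\mathcal{M}_{\v,\w}(u)\geq\tfrac{\lambda}{2}\|u\|_1+D$, which is the claim with $C=\lambda/2$.

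The main obstacle to anticipate is precisely that the boundary integral $\int_{\partial P}u\,d\sigma$ is \emph{not} controlled by the interior norm $\|u\|_1$ for normalized convex functions (a convex function may concentrate its growth near $\partial P$), so a direct attempt to bound the entropy by $\|u\|_1$ fails. The argument circumvents this by absorbing the boundary term into $\mathcal{F}_{\v,\w}(u)$ itself and by using that the entropy enters only through its logarithm, so that the linear lower bound coming from uniform weighted K-stability survives intact.
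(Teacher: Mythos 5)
Your proposal is correct, but it follows a genuinely different route from the paper's. The paper argues by comparison with an auxiliary weight: it sets $\w_0:=Scal_{\v}(u_0)$ for the Guillemin potential $u_0$, uses uniform stability and the boundedness of $\w-\w_0$ to dominate $\mathcal{F}_{\v,\w_0}$ by $\tilde{C}\mathcal{F}_{\v,\w}-C\|\cdot\|_1$, then recognizes $\tilde{C}\mathcal{F}_{\v,\w_0}(u^*)-\int_P\v\log\det\big(\textnormal{Hess}(u^*)\textnormal{Hess}(u_0)^{-1}\big)dx$ as $\mathcal{M}_{\v,\w_0}(\tilde{C}u^*)$ plus an explicit constant (linearity of $\mathcal{F}_{\v,\w_0}$ and the scaling of $\log\det$ under $u^*\mapsto\tilde{C}u^*$), and finally invokes Proposition \ref{extension}: since $u_0$ solves the $(\v,\w_0)$-Abreu equation, $\mathcal{M}_{\v,\w_0}$ is bounded below. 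You instead bound the entropy term directly and \emph{sublinearly}: pointwise AM--GM gives $\log\det\leq\ell\log(\mathrm{tr}/\ell)$, Jensen against the probability measure $\v\,dx/\mathrm{Vol}_{\v}(P)$ converts the integral of the logarithm into the logarithm of the integral, Lemma \ref{IPP-lemma} together with (\ref{v-scalar-curv-toric}) identifies that integral as $2\int_{\partial P}u\v\,d\sigma-\int_P Scal_{\v}(u_0)\,u\,dx$, and uniform stability absorbs this into $C_1\mathcal{F}_{\v,\w}(u)$, so that the linear term $\mathcal{F}_{\v,\w}(u)\geq\lambda\|u\|_1$ beats the logarithmic entropy bound. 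Both proofs share the reduction to normalized potentials, the closed formula for $\mathcal{M}_{\v,\w}$, Lemma \ref{IPP-lemma}, and the boundedness of $\w$ and $Scal_{\v}(u_0)$ on $P$; but yours bypasses Proposition \ref{extension} entirely (no appeal to the minimizing property of Abreu solutions), produces the explicit coercivity constant $C=\lambda/2$, and makes transparent why the boundary integral --- which is \emph{not} controlled by $\|u\|_1$ for convex functions --- is harmless, since it is absorbed into $\mathcal{F}_{\v,\w}(u)$ and enters only through a logarithm. The paper's argument, in turn, is shorter once Proposition \ref{extension} is available and stays closer to the unweighted mechanism of \cite{SKD, ZZ}, which is what the author is adapting. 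One point you should state explicitly: for $u\in\mathcal{S}(\PL)$ strict convexity forces $u^*\neq 0$, hence $\mathcal{F}_{\v,\w}(u^*)\geq\lambda\|u^*\|_1>0$, so the logarithm in your estimate is evaluated at a positive number; likewise the finiteness of $\int_P\sum_{i,j}H^{u_0}_{ij}u_{,ij}\v\,dx$, needed for Jensen, is exactly the last assertion of Lemma \ref{IPP-lemma}.
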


\begin{proof}

 This result when $\v=1$ is due to  \cite{SKD, ZZ}. The proof is an adaptation of the exposition in \cite{VA1}.

Let $u_0 \in \mathcal{S}(\PL)$ be the Guillemin K\"ahler potential. We consider $\mathcal{F}_{\v, \w_{0}}$ where $\w_{0}:=Scal_{\v}(u_0)$.  For any $f\in \mathcal{CV}_*^{\infty}(P)$ there exists $C>0$ such that 

\begin{eqnarray*}
| \mathcal{F}_{\v,\w}(f) - \mathcal{F}_{\v,\w_0}(f) | \leq 2C \|f \|_{1}.
\end{eqnarray*}

\noindent  Since $(\PL)$ is $(\v,\w)$-uniformly stable  we get

\begin{eqnarray*}
| \mathcal{F}_{\v,\w_0}(f) - \mathcal{F}_{\v,\w}(f) | \leq C_1 \mathcal{F}_{\v,\w}(f)  - C\|f \|_{1}, 
\end{eqnarray*}

\noindent where $C_1$ is a positive constant depending (\ref{uniform-equation}). We deduce that

\begin{equation}{\label{inequality}}
\mathcal{F}_{\v,\w_0}(f)  \leq  \tilde{C}\mathcal{F}_{\v,\w}(f) - C\|f \|_{1},
\end{equation}

\noindent  where $\tilde{C}:=C_1+1$. 
\noindent By Proposition \ref{extension}, the Mabuchi energy extends to $\mathcal{CV}_*^{\infty}(P)$. Then, by (\ref{inequality}) and (\ref{annulation-Futaki}), for any $u\in\mathcal{S}(\PL)$,

\begin{eqnarray*}
\mathcal{M}_{\v,\w}(u) &=&  \mathcal{F}_{\v,\w}(u^*)-\int_{P}\v \log\det\text{Hess}(u^*) \text{Hess}(u_0)^{-1} dx  \\
&\geq&  \tilde{C} \mathcal{F}_{\v,\w_{0}}(u^*) + C \| u^* \|_{1}  -\int_{P}\v \log\det\text{Hess}(u^*) \text{Hess}(u_0)^{-1} dx \\
&=& \mathcal{M}_{\v,\w_{0}}(\tilde{C} u^*) + \int_{P}\v \log\det\text{Hess}(\tilde{C} u^*) \text{Hess}(u^*)^{-1} dx    + C \| u^* \|_{1} \\
&= & \mathcal{M}_{\v,\w_{0}}(\tilde{C} u^*) + n \log \tilde{C}  \int_P \v dx  + C \| u^* \|_{1}.
\end{eqnarray*}

\noindent The Mabuchi energy $\mathcal{M}_{\v,\w_0}$ reaches its minimum at the potential $u_0\in \mathcal{S}(\PL)$, which is solution of

\begin{equation*}
    Scal_{\v}(u_0)=\w_0.
\end{equation*}

\noindent  In particular, $\mathcal{M}_{\v,\w_0}$ is bounded from below on $\mathcal{CV}^{\infty}(P)$ by Proposition $\ref{extension}$. Letting $D:=\inf_{\mathcal{CV}^{\infty}} \mathcal{M}_{\v,\w_0}+n\log \tilde{C} \int_P \v dx$ we get the result.

\end{proof}

\subsection{Existence of $(\v,\w)$-cscK is equivalent to $(\v,\w)$-uniform K-stability}

\noindent The following is established in \cite[Theorem 2.1]{LLS2} and is due to \cite{CLS} when $\v=1$.

\begin{prop}{\label{existence-implies-stable}}
Suppose there exists an $(\v,\w)$-cscK metric in $(V,[\omega_0], \mathbb{T})$, i.e. $(\ref{equation-Abreu})$ admits a solution $u \in \mathcal{S}(\PL)$. Then $P$ is $(\v,\w)$-uniformly K-stable. 
\end{prop}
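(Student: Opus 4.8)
The plan is to follow the toric argument of Donaldson \cite{SKD}, in the weighted form of \cite{CLS, LLS2}, exploiting that a solution $u\in\mathcal{S}(\PL)$ of the generalized Abreu equation $(\ref{equation-Abreu})$ is, by Proposition \ref{extension}, a global minimizer of $\mathcal{M}_{\v,\w}$ on $\mathcal{CV}^{\infty}(P)$. Since both sides of the stability inequality $(\ref{uniform-equation})$ are unchanged by adding an affine-linear function to $f$ (by $(\ref{annulation-Futaki})$ and the normalization $f=f^*+f_0$) and are positively $1$-homogeneous in $f$, it suffices to produce a single $\lambda>0$ with $\mathcal{F}_{\v,\w}(f)\geq\lambda\|f\|_{1}$ for every normalized $f\in\mathcal{CV}_*^{\infty}(P)$.

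First I would extract the sign of $\mathcal{F}_{\v,\w}$ from the minimizing property. For $f\in\mathcal{CV}_*^{\infty}(P)$ and $t\geq0$ one has $u+tf\in\mathcal{CV}^{\infty}(P)$, so $\mathcal{M}_{\v,\w}(u+tf)\geq\mathcal{M}_{\v,\w}(u)$. Inserting the explicit expression $\mathcal{M}_{\v,\w}(v)=\mathcal{F}_{\v,\w}(v)-\int_P\v\log\det\big(\mathrm{Hess}(v)\,\mathrm{Hess}(u_0)^{-1}\big)\,dx$ with $v=u+tf$, and using $\mathcal{F}_{\v,\w}(u+tf)=\mathcal{F}_{\v,\w}(u)+t\,\mathcal{F}_{\v,\w}(f)$, this rearranges, for every $t>0$, to
\[
\mathcal{F}_{\v,\w}(f)\ \geq\ \frac1t\int_P \v\,\log\det\!\big(I+t\,\mathrm{Hess}(u)^{-1}\mathrm{Hess}(f)\big)\,dx\ \geq\ 0 ,
\]
the last inequality holding because $\mathrm{Hess}(u)^{-1}\mathrm{Hess}(f)$ has non-negative eigenvalues; moreover the middle term is strictly positive as soon as $f$ is not affine-linear. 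This already recovers the (non-uniform) weighted K-stability of $(\PL)$, i.e. $\mathcal{F}_{\v,\w}(f)>0$ for every non-affine normalized $f$.

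The remaining, and genuinely hard, step is to upgrade this positivity to the \emph{uniform} estimate of Definition \ref{uniform-K-stable}. This cannot be obtained by a soft compactness argument on $\mathcal{CV}_*^{\infty}(P)$: the set of normalized convex functions of unit $L^1$-norm is not compact, and the boundary contribution $\int_{\partial P} f\,\v\,d\sigma$ to $\mathcal{F}_{\v,\w}$ is only semicontinuous (in the unfavourable direction) along sequences of convex functions, so the infimum in $(\ref{uniform-equation})$ need not be attained. The resolution, following \cite{CLS} and its weighted adaptation \cite{LLS2}, is to make the constant in the displayed lower bound effective uniformly in $f$: one uses the Donaldson interior a priori estimates for the minimizer $u$ to control $\mathrm{Hess}(u)^{-1}$ on $P^0$, together with the structural comparison between the interior norm $\|f\|_{1}$ and the boundary integral $\int_{\partial P} f\,d\sigma$ for normalized convex functions, which reduces the verification to a controlled class of test functions (simple piecewise-linear ``crease'' functions). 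This quantitative estimate, uniform over $f$ — rather than the mere positivity of $\mathcal{F}_{\v,\w}$ established above — is the main obstacle, and is exactly the content imported from \cite{CLS, LLS2}; combined with Lemma \ref{IPP-lemma} it yields the required $\lambda>0$.
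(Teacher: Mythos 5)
The paper offers no proof of Proposition \ref{existence-implies-stable} at all: it is imported verbatim from the literature, the sentence preceding it stating that it ``is established in \cite[Theorem 2.1]{LLS2} and is due to \cite{CLS} when $\v=1$.'' Your proposal rests on exactly the same citation for the only genuinely hard step (the uniform lower bound), so it matches the paper where it matters, and your identification of that step as the crux --- together with the correct observation that no soft compactness argument on $\mathcal{CV}^{\infty}_*(P)$ can produce the constant $\lambda$ --- is accurate. The extra content you supply, namely the reduction to normalized $f\in\mathcal{CV}^{\infty}_*(P)$ and the positivity $\mathcal{F}_{\v,\w}(f)>0$ for non-affine $f$ via the minimizing property of Proposition \ref{extension}, is correct but not needed once \cite{LLS2} is invoked; note also that the paper's own toolkit gives this positivity in one line: if $u\in\mathcal{S}(\PL)$ solves $(\ref{equation-Abreu})$, then Lemma \ref{IPP-lemma} yields $\mathcal{F}_{\v,\w}(f)=\int_P\sum_{i,j=1}^{\ell}\v H^u_{ij}f_{,ij}\,dx\ge 0$, the integrand being the trace of a product of positive semi-definite matrices. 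That integration-by-parts identity (rather than the ``interior a priori estimates'' and piecewise-linear ``crease'' functions you speculate about) is in fact the starting point of \cite{CLS, LLS2}, where the uniform constant is extracted from the boundary behaviour of $H^u$ imposed by Proposition \ref{boudary}; since you defer that analysis to the references anyway, this misdescription of their internals is harmless.
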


We now focus on the converse. We consider the space of normalized K\"ahler potentials $(\ref{normalized-compatible-potential})$ and normalized symplectic potentials

\begin{equation}{\label{normalized-smyplecic-potenial}}
    \mathring{\mathcal{S}}_{\v}(\PL):=\{ u \in \mathcal{S}(\PL) \text{ } | \text{ }\int_P u \v dx = \int_P u_0 \v  dx \}.
\end{equation}

\begin{lemma}{\label{correspondance}}
For any $\mathring{u}_t \in \mathring{\mathcal{S}}_{\v}(\PL)$, the corresponding K\"ahler potential $\varphi_t=\varphi_{\mathring{u}_t}$ obtained via $(\ref{relation-potential0})$ belongs to the space of normalized K\"ahler potential $\mathring{\mathcal{K}}_{\v}(V,\omega_0)^{\mathbb{T}}$ defined in $(\ref{normalized-compatible-potential})$. Conversely, any path in $\mathring{\mathcal{K}}_{\v}(V,\omega_0)^{\mathbb{T}}$ comes from a path $\mathring{u}_t$ in $\mathring{\mathcal{S}}_{\v}(\PL)$.

\end{lemma}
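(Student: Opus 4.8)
The plan is to reduce both assertions of the lemma to a single pointwise identity expressing the weighted Aubin--Mabuchi functional $\mathcal{I}_{\v}$ in terms of the weighted integral $\int_P u\,\v\,dx$ of the symplectic potential. Recall from \S\ref{subsection-complex-vs-stmplectic} that the Legendre correspondence $(\ref{relation-potential0})$ gives a bijection $u\mapsto \varphi_u$ between $\mathcal{S}(\PL)$ and $\mathcal{K}(V,\omega_0)^{\mathbb{T}}$, sending the Guillemin potential $u_0$ to $\varphi_{u_0}=0$, and that along a smooth path the variations are related by $(\ref{relation-potentials})$. Read as functions on $V$, this relation says precisely $\dot{\varphi}_t=-\dot{u}_t\circ m_{\varphi_t}$, since differentiating the Legendre identity $\phi_t(y)+u_t(x)=\langle x,y\rangle$ at fixed $y$ and using $y=\nabla_x u_t$ makes the cross terms cancel.

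First I would differentiate $\mathcal{I}_{\v}$ along such a path. By the defining variation $(\ref{Ir-functionnal})$,
\begin{equation*}
\frac{d}{dt}\mathcal{I}_{\v}(\varphi_t)=\int_V \dot{\varphi}_t\,\v(m_{\varphi_t})\,\omega_{\varphi_t}^{[\ell]}.
\end{equation*}
The key toric input is that the moment map pushes $\omega_{\varphi_t}^{[\ell]}$ forward to the fixed Lebesgue measure $dx$ on $P$, independently of $t$. Substituting $\dot{\varphi}_t=-\dot{u}_t\circ m_{\varphi_t}$, so that the integrand is a function of $m_{\varphi_t}$ alone, and pushing forward then yields
\begin{equation*}
\frac{d}{dt}\mathcal{I}_{\v}(\varphi_t)=-\int_P \dot{u}_t\,\v\,dx=-\frac{d}{dt}\int_P u_t\,\v\,dx.
\end{equation*}
Integrating along the linear path $u_t=(1-t)u_0+tu$, which stays in the convex set $\mathcal{S}(\PL)$, and using $\varphi_{u_0}=0$ together with $\mathcal{I}_{\v}(0)=0$, I obtain the $t$-free identity
\begin{equation*}
\mathcal{I}_{\v}(\varphi_u)=-\Big(\int_P u\,\v\,dx-\int_P u_0\,\v\,dx\Big).
\end{equation*}

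Comparing with the normalizations $(\ref{normalized-compatible-potential})$ and $(\ref{normalized-smyplecic-potenial})$, this identity reads $\mathcal{I}_{\v}(\varphi_u)=0$ if and only if $u\in\mathring{\mathcal{S}}_{\v}(\PL)$, which is exactly the equivalence $\varphi_u\in\mathring{\mathcal{K}}_{\v}(V,\omega_0)^{\mathbb{T}}\Leftrightarrow u\in\mathring{\mathcal{S}}_{\v}(\PL)$. The forward statement of the lemma is one implication, while the converse follows by applying the dual Legendre transform to a path $\varphi_t\in\mathring{\mathcal{K}}_{\v}(V,\omega_0)^{\mathbb{T}}$: each $\varphi_t$ equals $\varphi_{u_t}$ for a unique $u_t\in\mathcal{S}(\PL)$, and the identity forces $u_t\in\mathring{\mathcal{S}}_{\v}(\PL)$. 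I expect the only delicate point to be the bookkeeping of additive constants in the Legendre correspondence and the careful verification that $\dot{\varphi}_t=-\dot{u}_t\circ m_{\varphi_t}$ holds as an equality of functions on $V$; once this is pinned down, the $t$-independence of $(m_{\varphi_t})_*\omega_{\varphi_t}^{[\ell]}=dx$ is what makes the pushforward computation integrate cleanly.
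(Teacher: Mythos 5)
Your proof is correct and follows essentially the same route as the paper's: both rest on the variational formula for $\mathcal{I}_{\v}$, the pushforward of $\omega_{\varphi_t}^{[\ell]}$ to $dx$ under the moment map, the relation $\dot{u}_t=-\dot{\varphi}_t$ from $(\ref{relation-potentials})$, and the convexity of $\mathcal{S}(\PL)$. The only difference is one of packaging: the paper argues infinitesimally along paths (the derivative of $\mathcal{I}_{\v}$ vanishes if and only if $\int_P \dot{u}_t\,\v\,dx=0$), whereas you integrate once along the linear path to obtain the pointwise identity $\mathcal{I}_{\v}(\varphi_u)=-\bigl(\int_P u\,\v\,dx-\int_P u_0\,\v\,dx\bigr)$, a mild and convenient strengthening of the same mechanism.
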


\begin{proof}
By \cite[Lemma 2.4]{BW}, the functional $\mathcal{I}_{\v}$ defined in  (\ref{Ir-functionnal}), is also characterized by its variation for general weights $\v \in \mathcal{C}^{\infty}(P,\R_{>0})$. Then, a path $\varphi_t \in \mathcal{K}_{\v}(V,\omega_0)^{\mathbb{T}}$ starting from $0$ belongs to $\mathring{\mathcal{K}}_{\v}(V,\omega_0)^{\mathbb{T}}$ if and only if

 \begin{equation}{\label{annulation-ir}}
     \int_V \dot{\varphi}_t \v(m_{\varphi_t})\omega_t^{[\ell]} =0
 \end{equation}
 
\noindent for all $\dot{\varphi}_t \in T_{\varphi_t}\mathcal{K}_{\v}(V,\omega_0)^{\mathbb{T}}$. By pushing-forward the measure $\omega_t^{[\ell]}$ via $m_{\omega_{\varphi_t}}$ and using $(\ref{relation-potentials})$ we get that $(\ref{annulation-ir})$ is equivalent to

\begin{equation*}
    \int_P \dot{u}_t\v dx=0,
\end{equation*}

\noindent where $u_t$ is the path corresponding to $\varphi_t$ via $(\ref{relation-potential0})$. The conclusion follows from the convexity of $\mathcal{S}(\PL)$.
\end{proof}

\begin{theorem}{\label{theorem-B}}
Let $(M,J, \tilde{\omega}_0, \mathbb{T})$ be a semi-simple principal toric fibration with K\"ahler toric fiber $(V,J_V, \omega_0, \mathbb{T})$. Let $(\v,\w)$ be the weights defined in $(\ref{weights})$ and denote by $P$ the Delzant polytope associated to $(V, \omega_0, \mathbb{T})$. Then there exists a $ (\v,\w)$-weighted cscK metric in $[\omega_0]$ if and only if $P$ is $(\v,\w)$-uniformly $K$-stable. In particular, the latter condition is necessary and sufficient for $[\tilde{\omega}_0]$ to admit an extremal K\"ahler metric.
\end{theorem}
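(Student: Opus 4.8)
The plan is to split the equivalence into its two implications and then read off the final assertion by composing with Theorem \ref{theoremAA}, which already identifies the existence of an extremal Kähler metric on $(M,J,[\tilde{\omega}_0])$ with the existence of a $(\v,\w)$-cscK metric on $(V,J_V,[\omega_0])$ for the weights (\ref{weights}). Thus it suffices to prove that a $(\v,\w)$-cscK metric exists in $[\omega_0]$ if and only if $P$ is $(\v,\w)$-uniformly K-stable, and the ``in particular'' clause is then immediate.

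The implication ``$(\v,\w)$-cscK $\Rightarrow$ $(\v,\w)$-uniform K-stability'' is exactly Proposition \ref{existence-implies-stable}: passing to the symplectic picture of \S\ref{subsection-complex-vs-stmplectic}, a solution $u\in\mathcal{S}(\PL)$ of the generalized Abreu equation (\ref{equation-Abreu}) is shown, by the weighted adaptation in \cite{LLS2} of the argument of \cite{CLS}, to force the defining inequality (\ref{uniform-equation}). I would simply invoke it.

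For the converse I would proceed in three stages. First, Proposition \ref{stable-equivaut-energy-propr-v} turns uniform K-stability into coercivity of the weighted Mabuchi energy: there are $C>0$ and $D\in\R$ with $\mathcal{M}_{\v,\w}(u)\geq C\|u^*\|_1+D$ for all $u\in\mathcal{S}(\PL)$. Second, I would promote this polytope-side estimate to $d_{1,T^{\mathbb{C}}}$-properness of the relative Mabuchi energy $\mathcal{M}^T$ on $M$; the ingredients are Lemma \ref{correspondance}, matching $\mathring{\mathcal{S}}_{\v}(\PL)$ with $\mathring{\mathcal{K}}_{\v}(V,\omega_0)^{\mathbb{T}}$, together with Lemma \ref{mabuchi-energy-restriction} (so that $\mathcal{M}^T$ restricts to $\mathcal{M}_{\v,\w}$ on compatible potentials) and Lemma \ref{restriction-distance} (so that $d_1$ restricts to the weighted distance $d_{1,\v}$). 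Third, once $\mathcal{M}^T$ is $d_{1,T^{\mathbb{C}}}$-proper, Theorem \ref{Chen--Cheng-existence} in its maximal-torus form yields an extremal Kähler metric on $(M,J,[\tilde{\omega}_0])$, and Theorem \ref{theoremAA} converts it back into a $(\v,\w)$-cscK metric on $V$, closing the loop.

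I expect the second stage to be the \emph{main obstacle}. The coercivity from Proposition \ref{stable-equivaut-energy-propr-v} controls $\mathcal{M}^T$ only along the totally geodesic slice of compatible potentials $\mathring{\mathcal{K}}_{\v}(V,\omega_0)^{\mathbb{T}}$, whereas $d_{1,T^{\mathbb{C}}}$-properness is a statement about \emph{all} $T$-invariant potentials on $M$. Two things must be reconciled: on the one hand the comparison $\|u^*\|_1\asymp d_{1,\v}(0,\varphi_u)$ up to the $\mathbb{T}_M^{\mathbb{C}}$-action, i.e. the fact that quotienting fiber potentials by $\mathbb{T}_M^{\mathbb{C}}$ realizes exactly the projection $u\mapsto u^*$ on the polytope (this uses the Legendre correspondence (\ref{relation-potential0})--(\ref{relation-potentials})); on the other hand the reduction showing that, since $\mathbb{T}_M\subset T$ via (\ref{exact-sequence}) and the base $S=\prod_a S_a$ carries fixed cscK factors, the relative distance $d_{1,T^{\mathbb{C}}}$ is already detected on compatible representatives, so that the fiber estimate delivers a genuine lower bound of $\mathcal{M}^T$ in terms of $d_{1,T^{\mathbb{C}}}$. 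Making this reduction precise --- that the non-compatible directions cannot decrease the Mabuchi energy faster than they increase the relative distance --- is the technical crux on which the argument turns.
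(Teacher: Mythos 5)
Your first implication and the opening stage of your converse match the paper's proof: existence of a $(\v,\w)$-cscK metric implies $(\v,\w)$-uniform K-stability by invoking Proposition \ref{existence-implies-stable}, and stability yields the polytope-side coercivity $\mathcal{M}_{\v,\w}(u)\geq C\|u^*\|_1+D$ via Proposition \ref{stable-equivaut-energy-propr-v}, transferred to the fiber through Lemma \ref{correspondance}, Lemma \ref{restriction-distance} and the choice of $\gamma\in\mathbb{T}^{\mathbb{C}}$ normalizing the symplectic potential so that $d_{x_0}u_{\gamma\cdot\varphi}=0$. Reading off the ``in particular'' clause from Theorem \ref{theoremA} is also exactly what the paper does.

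The gap is precisely the step you flag as the ``technical crux'': your plan needs $d_{1,T^{\mathbb{C}}}$-properness of $\mathcal{M}^T$ on \emph{all} of $\mathring{\mathcal{K}}(M,\tilde{\omega}_0)^{T}$ in order to invoke Theorem \ref{Chen--Cheng-existence} as a black box, and you leave open how to control the non-compatible directions. There is no evident way to obtain this promotion from the polytope estimate: a priori the stability hypothesis says nothing about potentials transverse to the compatible slice, and a posteriori full properness follows only after the extremal metric has been constructed, so your route is circular at exactly this point. The paper's resolution is to avoid full properness altogether. What stability gives is properness of $\mathcal{M}^T$ \emph{restricted} to $\mathring{\mathcal{K}}_{\v}(V,\omega_0)^{\mathbb{T}}$, and this suffices because in the proof of Theorem \ref{theoremA} ``(1)$\Rightarrow$(2)'' the $d_{1,T^{\mathbb{C}}}$-properness is only ever used on sequences lying in $\mathcal{K}(V,\omega_0)^{\mathbb{T}}$: the continuity path $(\ref{continuity-path-weithed})$ is set up inside the space of compatible potentials, so the closedness argument needs a distance bound only on solutions in that slice. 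Re-running that continuity-method argument with the restricted properness produces a compatible extremal metric, i.e.\ a $(\v,\w)$-cscK metric on $V$, with no need to control ambient potentials on $M$. So the correct fix is not to prove your ``crux'' but to discard it: replace the black-box use of Theorem \ref{Chen--Cheng-existence} by the proof of Theorem \ref{theoremA}, which is engineered to run entirely within the compatible slice.
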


\begin{proof}
 Suppose there exists a $(\v,\w)$-cscK metric in $[\omega_0]$. By Proposition \ref{existence-implies-stable}, $P$ is $(\v,\w)$-uniformly K-stable.

Conversely, suppose $P$ is $(\v,\w)$-uniformly K-stable. We are going to show that there are uniform positive constants $\tilde{A}$ and $\tilde{B}$ such that 

\begin{equation}{\label{coercivity-conclu}}
    \mathcal{M}_{\v, \w}(\varphi) \ge\tilde{ A} \inf_{\gamma \in \mathbb{T}^{\mathbb{C}}}   d^V_{1,\v}(0, \gamma \cdot \varphi) - \tilde{B},
\end{equation}
 where $d^V_{1,\v}$ is defined in Lemma \ref{restriction-distance} and $\mathcal{M}_{\v,\w}$ is the weighted Mabuchi energy of the K\"ahler toric fiber $(V,J_V,[\omega_0], \mathbb{T})$, see (\ref{definition-weighted-eneergy}). For all $\varphi \in \mathring{\mathcal{K}}(V,\omega_0)^{\mathbb{T}}$, there exists $\gamma \in \mathbb{T}^{\mathbb{C}}$ such that the  symplectic potential $u_{\gamma \cdot \varphi}$
corresponding to $\gamma \cdot \varphi$ satisfies $d_{x_0}u_{\gamma \cdot \varphi}=0$. By  Lemma  $\ref{correspondance}$ and the inequality in \cite[(66)]{VA1}, we have

\begin{equation}{\label{coercivity-conclu2}}
d^V_{1,\mathbb{T}^{\mathbb{C}}}(0,\gamma \cdot \varphi)  \le A \int_{P}|u_{\varphi}^* - u_0^*| dx \le A\|u_{\varphi}^*\|_{1} + B,
\end{equation}

\noindent for some uniform constants $A>0$ and $ B>0$, where $d_{1,\mathbb{T}}^V$ is the $d_1$ distance relative to $\mathbb{T}^{\mathbb{C}}$ (see $(\ref{distance-relative})$) on $\mathcal{K}(V,\omega_0)^{\mathbb{T}}$, $u_{\varphi}\in S(\PL)$ is the symplectic potential corresponding to $\varphi$ and $u^*_{\varphi}$ is its normalization in $S(\PL) \cap  \mathcal{CV}^{\infty}_*(P)$, see (\ref{normalized-function-polytope}).  Since $\v>0$ on $P$, we have for the weighted distance $ d_{1,\v}^V \le C d_1^V$. Then (\ref{coercivity-conclu}) follows from  (\ref{coercivity-conclu2}) and Proposition \ref{stable-equivaut-energy-propr-v}.

Let $T$ be the maximal torus in $\mathrm{Aut}_{\red}(M)$ containing $\mathbb{T}_M$ and satisfying $(\ref{exact-sequence})$. By Lemma  \ref{mabuchi-energy-restriction}, and our choice of normalization $(\ref{normalized-compatible-potential})$, the Mabuchi energy $\mathcal{M}^{T}$ restricts to $\mathcal{M}_{\v,\w}$ on $ \mathring{\mathcal{K}}_{\v}(V,\omega_0)^{\mathbb{T}}$. We denote by $d_{1,T^{\mathbb{C}}}$ the $d_1$ distance relative to $T^{\mathbb{C}}$ on $\mathcal{K}(M,\tilde{\omega}_0)^T$. Since any $\mathbb{T}^{\mathbb{C}}_M$-orbit lies in a $T^{\mathbb{C}}$-orbit, by (\ref{coercivity-conclu}) and Lemma \ref{restriction-distance},  $\mathcal{M}^{T}$ is $d_{1,T^{\mathbb{C}}}$-proper on $\mathring{\mathcal{K}}_{\v}(V,\omega_0)^{\mathbb{T}}$ in the sense of Definition \ref{def-proper}. 

In the proof of Theorem \ref{theoremA} $"(1) \Rightarrow (2)"$, we have used the $d_{1,T^{\mathbb{C}}}$-properness  only on sequences included in $\mathcal{K}(V,\omega_0)^{\mathbb{T}}$. It allows us to obtain the existence of a $(\v, \w)$-cscK metric by the same argument.

The last assertion follows from Theorem \ref{theoremA}.
\end{proof}

\section{Applications}{\label{section-application}}

\subsection{Almost K\"ahler metrics}{\label{subsection-toric}}

As observed in \cite{SKD}, for fixed angular coordinates $dt_0$ with respect to a reference K\"ahler structure $J_0$, one can use (\ref{metric-toric2}) to define a $\mathbb{T}$-invariant \textit{almost-K\"ahler} metric on $V$,  as soon as $\textbf{H}$ satisfies the smoothness, boundary value and positivity conditions  of Proposition \ref{boudary}, even if the inverse matrix $\textbf{G}:= \textbf{H}^{-1}$ is not necessarily the Hessian of a smooth function. We shall refer to such almost K\"ahler metric as \textit{involutive}. One can further use such involutive AK metrics on $V$ to build a compatible metric $\tilde g_{\textbf{H}}$ on $M$, by the formula 

\begin{equation*}
    \tilde{g}_{\textbf{H}}=\sum_{a =1}^k\big(\langle p_a, m \rangle +c_a\big)g_a + \langle dm, \textbf{G} , d m \rangle + \langle \theta,\textbf{H}, \theta \rangle.
\end{equation*}

It is shown in \cite{VA3} that $\tilde g_{\textbf{H}}$ is extremal AK on $M$ in the sense of \cite{ML} (i.e. the hermitian scalar curvature of $\tilde g_{\textbf{H}}$ is a Killing potential) if and only if $\textbf{H}$ satisfies the equation

\begin{equation}{\label{acscK-equation}}
   - \sum_{i,j}\big(\v H_{ij})_{,ij}=\w,
\end{equation}

 \noindent for $(\v,\w)$ defined in $(\ref{weights})$. We shall more generally consider involutive AK metrics  satisfying the equation $(\ref{acscK-equation})$ weight functions $\v>0$ and $\w$. For such AK metrics we say that $(V,J_\textbf{H},,g_\textbf{H},\omega,\mathbb{T})$ is an \textit{involutive }$(\v,\w)$\textit{-csc almost K\"ahler metric}.

The point of considering involutive $(\v,\w)$-csc almost K\"ahler metrics is that  $(\ref{acscK-equation})$ is a linear indeterminate PDE for the smooth coefficients of $\textbf{H}$ (which would therefore admit  infinitely many solutions if we drop the positivity assumption of $\textbf{H}$), which in some special cases is easier to solve explicitly, as demonstrated in \cite{VA3}. On the other hand,  it was observed in \cite{SKD} (see \cite{VA3} for the weighted case) that the existence of a $(\v,\w)$-csc almost K\"ahler metric implies that $\mathcal{F}_{\v, \w}(f) \ge 0$ with equality iff $f=0$,  and it was conjectured that the existence of a $(\v,\w)$-csc almost K\"ahler metric is equivalent to the existence of a $(\v,\w)$-cscK metric on $(V, \omega,\mathbb{T})$. E. Legendre \cite{EL} observed that the existence of an involutive extremal almost K\"ahler metric implies the stronger yet uniform stability of $P$, and thus confirmed the conjecture in the case where $\v=1$ and $\w=\ell_{\textnormal{ext}}$, see $\S \ref{section-extremal-vector-fields}$. Our additional observation is that the same arguments as in the proof of Proposition $\ref{existence-implies-stable}$ show the following.

\begin{prop}
Let $(V, \omega,\mathbb{T})$ be a toric manifold associated to Delzant polytope $P$. Let $\v \in \mathcal{C}^{\infty}(P,\R_{>0})$ and $\w \in \mathcal{C}^{\infty}(P,\R)$ such that $\w$ satisfies $(\ref{annulation-Futaki})$. Suppose there exists an involutive $(\v,\w)$-csc almost K\"ahler metric on $(V, \omega,\mathbb{T})$, i.e. there exists $\textbf{H}$ satisfying the smoothness, boundary value and positivity conditions of Proposition \ref{boudary} and equation $(\ref{acscK-equation})$. Then $P$ is $(\v,\w)$-uniformly K-stable.
\end{prop}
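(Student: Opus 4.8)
The plan is to run, in the almost-K\"ahler setting, exactly the argument behind Proposition \ref{existence-implies-stable}, the whole point being that that argument uses its solution tensor $\textbf{H}$ only through its smoothness, its boundary values, its positivity and the PDE it satisfies, and never through the integrability condition that $\textbf{G} = \textbf{H}^{-1}$ be a genuine Hessian. First I would reduce to normalized test functions: since $\w$ satisfies (\ref{annulation-Futaki}), the linear functional $\mathcal{F}_{\v,\w}$ annihilates affine-linear functions, so writing $f = f^* + f_0$ as in \S\ref{subsection-K-stability-and-proper} gives $\mathcal{F}_{\v,\w}(f) = \mathcal{F}_{\v,\w}(f^*)$; as $\|f^*\|_1$ also depends only on $f^*$, it suffices to produce $\lambda>0$ with $\mathcal{F}_{\v,\w}(f^*) \geq \lambda\|f^*\|_1$ for all $f^* \in \mathcal{CV}^\infty_*(P)$.

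The algebraic heart of the matter is the identity
\begin{equation*}
\mathcal{F}_{\v,\w}(f) = \int_P \sum_{i,j=1}^\ell \v H_{ij}\, f_{,ij}\, dx, \qquad f \in \mathcal{CV}^\infty(P),
\end{equation*}
obtained (with the conventions of \S\ref{section-toric}) by inserting equation (\ref{acscK-equation}), namely $-\sum_{i,j}(\v H_{ij})_{,ij} = \w$, into the integration-by-parts formula of Lemma \ref{IPP-lemma}. The essential observation is that Lemma \ref{IPP-lemma} was already stated for an arbitrary smooth $\textbf{H}$ satisfying only the boundary-value condition of Proposition \ref{boudary}, with neither positivity nor any integrability assumed; hence the identity is available for the involutive almost-K\"ahler tensor $\textbf{H}$ verbatim. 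Invoking now the positivity of $\textbf{H}$ on $P^0$ together with the convexity of $f$ (so that $\mathrm{Hess}(f) \geq 0$) makes the integrand non-negative, whence $\mathcal{F}_{\v,\w}(f) \geq 0$; this is already $(\v,\w)$-K-semistability.

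It remains to promote non-negativity to the uniform bound, which I would do by the contradiction/compactness scheme of \cite{CLS, LLS2}. If no $\lambda>0$ worked there would be $f_k \in \mathcal{CV}^\infty_*(P)$ with $\|f_k\|_1 = 1$ and $\mathcal{F}_{\v,\w}(f_k) \to 0$; Donaldson's compactness for normalized convex functions of bounded $L^1$-norm yields a convex limit $f_\infty$ with $\|f_\infty\|_1 = 1$, while continuity of the bulk term $\int_P \w f\,dx$ ($\w$ being bounded) and lower semicontinuity of the weighted boundary measure give $\mathcal{F}_{\v,\w}(f_\infty) \leq \liminf_k \mathcal{F}_{\v,\w}(f_k) = 0$. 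The displayed identity and the positive-definiteness of $\textbf{H}$ then force $\mathrm{Hess}(f_\infty) \equiv 0$, so $f_\infty$ is affine and, being normalized, $f_\infty \equiv 0$, contradicting $\|f_\infty\|_1 = 1$. The main obstacle --- indeed the only content beyond bookkeeping --- is to verify that every step here, in particular the rigidity statement ``$\mathcal{F}_{\v,\w}(f_\infty) = 0 \Rightarrow f_\infty$ affine'', rests solely on the identity above and on the interior positivity of $\textbf{H}$, and never on $\textbf{H}$ being the inverse Hessian of an honest symplectic potential. This is precisely the content of the discussion preceding the statement: the uniform-stability argument is really an argument about the data $(\textbf{H}, \v, \w)$ constrained by Proposition \ref{boudary} and (\ref{acscK-equation}), so dropping the integrability of $\textbf{G}$ costs nothing.
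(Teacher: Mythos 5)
Your overall strategy coincides with the paper's: the proof given there consists of the single observation that the argument behind Proposition \ref{existence-implies-stable} (that of \cite{CLS}, in the weighted form of \cite{LLS2}) touches $\textbf{H}$ only through its smoothness, its boundary values, its positivity on $P^0$ and the equation (\ref{acscK-equation}), and never through the integrability of $\textbf{G}=\textbf{H}^{-1}$; your reduction to $\mathcal{CV}^{\infty}_*(P)$, the identity coming from Lemma \ref{IPP-lemma}, and the resulting semistability $\mathcal{F}_{\v,\w}(f)\ge 0$ are exactly this observation. One caveat on signs: inserting (\ref{acscK-equation}) into Lemma \ref{IPP-lemma} gives $\int_P\sum_{i,j}\v H_{ij}f_{,ij}\,dx=2\int_{\partial P}f\v\,d\sigma-\int_P f\w\,dx$, which equals $\mathcal{F}_{\v,\w}(f)$ only for the convention with a minus sign on the bulk term; that is the convention the paper actually uses when deriving $d_u\mathcal{M}_{\v,\w}$, so the ``$+$'' in (\ref{define-futaki}) is a typo and your identity is the right one, but it does not follow from the displayed formulas taken literally.

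The genuine gap is in your compactness step. From $\|f_k\|_1=1$ alone you cannot extract a limit with $\|f_\infty\|_1=1$: normalized convex functions of unit $L^1$-norm can send all of their mass to the boundary. On $P=[0,1]$ with $x_0=1/2$, the functions $f_k(x)=2k^2\max\bigl(0,x-(1-1/k)\bigr)$ (smoothed near the kink if one insists on interior smoothness) are convex, normalized, have $\|f_k\|_1=1$, and converge to $0$ locally uniformly on $P^0$; the limit has zero $L^1$-norm, and indeed $\int_{\partial P}f_k\,d\sigma\to\infty$. The repair uses data you already have, but in the right order: since $\mathcal{F}_{\v,\w}(f_k)\to 0$, $|\w|\le C$ on $P$ and $\v\ge\delta>0$ on $P$, the identity $2\int_{\partial P}f_k\v\,d\sigma=\mathcal{F}_{\v,\w}(f_k)+\int_P\w f_k\,dx$ bounds $\int_{\partial P}f_k\,d\sigma$ uniformly, and it is this boundary bound --- not the $L^1$ bound --- that feeds Donaldson's compactness lemma \cite{SKD} and rules out escape of mass, giving $f_k\to f_\infty$ in $L^1(P)$ and hence $\|f_\infty\|_1=1$. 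A second, smaller, point: you evaluate $\mathcal{F}_{\v,\w}$ and the identity on $f_\infty$, which is merely convex, while both were established for functions smooth in $P^0$; the clean route is to bypass $\mathcal{F}_{\v,\w}(f_\infty)$ and use lower semicontinuity of the pairing $f\mapsto\int_{P^0}\sum_{i,j}\v H_{ij}f_{,ij}$ of $\v\textbf{H}$ against the Alexandrov (measure) Hessians, which yields $\int_{P^0}\sum_{i,j}\v H_{ij}(f_\infty)_{,ij}\le\liminf_k\mathcal{F}_{\v,\w}(f_k)=0$, hence $f_\infty$ affine on $P^0$, hence $\equiv 0$ by normalization, contradicting $\|f_\infty\|_1=1$. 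These two steps are precisely where \cite{CLS, LLS2} put the analytic work; since the paper proves the proposition by delegating to them, your write-up is complete only once they are carried out (or likewise delegated) rather than asserted.
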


Combining this result (for the special weights (v, w) associated to a semi-simple  principal torus bundle via $(\ref{weights})$) with Theorem \ref{theorem-B}, we deduce:

\begin{prop}{\label{equivalence-almostcsck}}
Let $(V, \omega,\mathbb{T})$ be a toric manifold associated to Delzant polytope $P$. Let $(\v,\w)$ be weights defined in $(\ref{weights})$. Then the following statements are equivalent:

\begin{enumerate}
    \item there exists a $(\v,\w)$-cscK  metric on $(V,\omega,\mathbb{T})$;
    \item there exists an involutive $(\v,\w)$-csc almost K\"ahler metric on $(V,\omega,\mathbb{T})$;
    \item $P$ is $(\v,\w)$-uniformly K-stable in the sense of Definition \ref{uniform-K-stable}.
\end{enumerate}    
\end{prop}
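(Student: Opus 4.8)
The plan is to establish the three equivalences by closing the cycle of implications $(1) \Rightarrow (2) \Rightarrow (3) \Rightarrow (1)$, drawing on the two results that immediately precede this statement together with Theorem \ref{theorem-B}. Two of the three arrows have essentially already been proved elsewhere in the paper, so the work consists in identifying the trivial remaining arrow and then assembling the cycle.

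For $(1) \Rightarrow (2)$, I would observe that a $(\v,\w)$-cscK metric is nothing but a special case of an involutive $(\v,\w)$-csc almost K\"ahler metric. Indeed, such a metric corresponds to a symplectic potential $u \in \mathcal{S}(\PL)$ solving the generalized Abreu equation $(\ref{equation-Abreu})$, and the associated matrix $\textbf{H} = \mathrm{Hess}(u)^{-1}$ then satisfies both the boundary and positivity conditions of Proposition \ref{boudary} and equation $(\ref{acscK-equation})$. Since here $\textbf{G} = \textbf{H}^{-1} = \mathrm{Hess}(u)$ is a genuine Hessian, the metric defined by $(\ref{metric-toric2})$ is honestly K\"ahler, hence in particular an involutive almost K\"ahler metric in the sense of \S\ref{subsection-toric}. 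Thus $(1) \Rightarrow (2)$ requires no computation beyond this identification.

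The arrow $(2) \Rightarrow (3)$ is exactly the content of the Proposition stated immediately above, applied to the specific weights $(\v,\w)$ of $(\ref{weights})$, which satisfy the normalization $(\ref{annulation-Futaki})$ as observed in the preceding Remark. Finally, $(3) \Rightarrow (1)$ is one direction of Theorem \ref{theorem-B}: the $(\v,\w)$-uniform K-stability of $P$ yields the coercivity estimate of Proposition \ref{stable-equivaut-energy-propr-v}, which upgrades to properness of the relative Mabuchi energy and hence, via the existence theorem of Chen--Cheng--He (Theorem \ref{Chen--Cheng-existence}), to a $(\v,\w)$-cscK metric on $(V,\omega,\mathbb{T})$. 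Chaining these three implications closes the cycle and proves the equivalence.

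Since all the analytic substance is already carried by Theorem \ref{theorem-B} and the preceding Proposition, there is no genuine obstacle here; the only point requiring care is the direction $(3) \Rightarrow (1)$, whose proof is not self-contained but relies on the full machinery of Section \ref{section-theoremA} and Theorem \ref{theorem-B}. Rather than reproduce that argument, I would simply cite Theorem \ref{theorem-B} and emphasize that the added value of the present statement is the insertion of the a priori weaker condition $(2)$ between the existence of a genuine K\"ahler solution and the stability of the polytope, thereby confirming (for these weights) the almost-K\"ahler conjecture recalled in \S\ref{subsection-toric}.
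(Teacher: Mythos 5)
Your proposal is correct and follows essentially the same route as the paper: the paper also deduces the result by combining the immediately preceding Proposition (involutive $(\v,\w)$-csc almost K\"ahler $\Rightarrow$ uniform K-stability, valid since the weights $(\ref{weights})$ satisfy $(\ref{annulation-Futaki})$) with Theorem \ref{theorem-B} (stability $\Rightarrow$ existence of a $(\v,\w)$-cscK metric), the implication $(1) \Rightarrow (2)$ being the trivial observation that a toric K\"ahler metric, with $\textbf{H} = \mathrm{Hess}(u)^{-1}$ for $u \in \mathcal{S}(\PL)$, is in particular an involutive almost K\"ahler metric. Your identification of the cycle $(1) \Rightarrow (2) \Rightarrow (3) \Rightarrow (1)$ and of which ingredient carries each arrow matches the paper exactly.
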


\subsection{Proof of Corollary \ref{prop-ex}}
Let $(S,J_S)$ be a compact complex curve of genus $\textbf{g}$ and  $\mathcal{L}_i \longrightarrow S$ an holomorphic line bundle, $i=0,1,2$. We consider  $(M,J):=\mathbb{P}(\mathcal{L}_0\oplus \mathcal{L}_1 \oplus \mathcal{L}_2)$.  Since the biholomorphism  class of $M$ is invariant by tensoring $\mathcal{L}_0\oplus \mathcal{L}_1 \oplus \mathcal{L}_2$ with a line bundle, we can suppose without loss of generality that  $(M,J)=P(\mathcal{O}\oplus \mathcal{L}_1\oplus \mathcal{L}_2)$, where $\mathcal{O}\longrightarrow S$ is the trivial line bundle and $\textit{\textbf{p}}_i:=deg\big(\mathcal{L}_i\big) \geq0$, $i=1,2$. Suppose $\textit{\textbf{p}}_1=\textit{\textbf{p}}_2=0$, then by a result of Fujiki \cite{AF} $(M,J)$ admits an extremal metric (see also \cite[Remark 2]{VA3}) in every K\"ahler class. If $\textit{\textbf{p}}_2 = \textit{\textbf{p}}_1 > 0$ or $\textit{\textbf{p}}_2 > \textit{\textbf{p}}_1 = 0$ and $\textbf{g}=0,1$, there exists an extremal metric in every K\"ahler class by \cite[Theorem 6]{VA8}. We then suppose $\textit{\textit{\textbf{p}}}_2>\textit{\textbf{p}}_1>0$.

Suppose $S$ is of genus $\textbf{g}=0$, i.e. $(S,J_S)=\mathbb{CP}^1$. Then $M$ is a toric variety. Using the existence of an extremal almost K\"ahler metric of involutive type compatible with any K\"ahler metric on $M$ (see \cite[Proposition 4]{VA3}) and the Yau--Tian--Donaldson correspondence on toric manifold, it is shown in \cite{EL} that there exists an extremal K\"ahler metric in every K\"ahler class of $\mathbb{P}(\mathcal{O}\oplus \mathcal{L}_1 \oplus \mathcal{L}_2) \longrightarrow \mathbb{CP}^1$. Observe that by applying Proposition \ref{equivalence-almostcsck} and Theorem \ref{theoremA} we obtain that these extremal metrics are given by the Calabi ansatz of \cite{VA2}.

Now suppose that $(S,J_S)$ in an elliptic curve, i.e. $\textbf{g}=1$. The complex  manifold $(M,J)$ is not toric. However, it is shown in \cite{VA3} that $(M,J)$ is a semi-simple principal toric fibration. By the Leray-Hirch Theorem $H^2(M,\R)$ is of dimension $2$. In particular, up to scaling any K\"ahler class on $(M,J)$ is compatible. It is shown in \cite[Proposition 4]{VA8}, that $M$ admits an extremal almost K\"ahler metric in any compatible K\"ahler class. Then, using Proposition \ref{equivalence-almostcsck} and Theorem \ref{theorem-B}, we conclude that there exists an extremal K\"ahler metric in every K\"ahler class. Furthermore, by Theorem \ref{theoremA},  the extremal K\"ahler metrics are of the form of (\ref{metriccalabidata}), i.e. are given by the generalized Calabi ansatz.

\begin{remark}
We conclude by pointing out that if $\textnormal{\textbf{g}} \geq 2$, it is shown in \cite[Theorem 2]{VA3} that there exists an extremal K\"ahler metric in sufficiently \textit{small} compatible K\"ahler classes. On the other hand, by \cite[Proposition 2]{VA3}, if $\textnormal{\textbf{g}} > 2$ and $\textit{\textbf{p}}_1,\textit{\textbf{p}}_2$ satisfying $2(\textnormal{\textbf{g}}-1)>\textit{\textbf{p}}_1+\textit{\textbf{p}}_2$, there is no extremal K\"ahler metric in sufficiently \textit{big} K\"ahler classes.
\end{remark}

\end{document}